\theoremstyle{plain}
\newtheorem*{thm*}{Theorem}
\newtheorem{thm}{Theorem}[section]
\Crefname{thm}{Theorem}{Theorems}
\newtheorem*{lem*}{Lemma}
\newtheorem{lem}[thm]{Lemma}
\Crefname{lem}{Lemma}{Lemmas}
\newtheorem*{claim*}{Claim}
\newtheorem{claim}[thm]{Claim}
\crefname{claim}{Claim}{Claims}
\Crefname{claim}{Claim}{Claims}
\newtheorem{prop}[thm]{Proposition}
\Crefname{prop}{Proposition}{Propositions}
\crefname{cor}{Corollary}{Corollaries}
\crefname{conj}{Conjecture}{Conjectures}
\Crefname{qn}{Question}{Questions}
\newtheorem{obs}[thm]{Observation}
\Crefname{obs}{Observation}{Observations}
\Crefname{ex}{Example}{Examples}
\theoremstyle{definition}
\Crefname{prob}{Problem}{Problems}
\newtheorem{defn}[thm]{Definition}
\Crefname{defn}{Definition}{Definitions}
\theoremstyle{remark}
\renewenvironment{proof}[1][]{\begin{trivlist}
\item[\hspace{\labelsep}{\bf\noindent Proof#1.\/}] }{\qed\end{trivlist}}
\newcommand{\remove}[1]{}
\newcommand{\ceil}[1]{
    \lceil #1 \rceil
}
\newcommand{\floor}[1]{
    \lfloor #1 \rfloor
}
\newcommand{\eps}{\varepsilon}
\newcommand{\whp}{with high probability}
\newcommand{\dirpath}[1]{\overrightarrow{P_{#1}}}
   \def\MR#1{}
\DeclareMathOperator{\lf}{lf}
\DeclareMathOperator{\dpl}{L}
\begin{document}


\title{Monochromatic trees in random tournaments}
\date{\vspace{-5ex}}
\author{
	Matija Buci\'c\thanks{
	    Department of Mathematics, 
	    ETH Zurich, Switzerland;
	    e-mail: \texttt{matija.bucic}@\texttt{math.ethz.ch}.
	}
	\and
  Sven Heberle\thanks{
    Department of Mathematics,
    ETH Zurich, Switzerland;
    e-mail: \texttt{heberle.sven}@\texttt{gmail.com}.
  }
  \and
    Shoham Letzter\thanks{
        ETH Institute for Theoretical Studies,
        ETH Zurich, Switzerland;
        e-mail: \texttt{shoham.letzter}@\texttt{math.ethz.ch}.
        Research supported by Dr.\ Max R\"ossler, the Walter Haefner Foundation and the ETH Zurich Foundation.
    }
    \and
Benny Sudakov\thanks{Department of Mathematics, ETH Z\"urich, Switzerland; Email:
\href{mailto:benjamin.sudakov@math.ethz.ch} {\nolinkurl{benjamin.sudakov@math.ethz.ch}}.
Research supported in part by SNSF grant 200021-175573.}
}

\maketitle

\begin{abstract}

    \setlength{\parskip}{\medskipamount}
    \setlength{\parindent}{0pt}
    \noindent

    We prove that, \whp, in every $2$-edge-colouring of the random tournament on $n$ vertices there is a monochromatic copy of every oriented tree of order $O (n / \sqrt{\log n})$. This generalises a result of the first, third and fourth authors who proved the same statement for paths, and is tight up to a constant factor.

\end{abstract}
\section{Introduction} \label{sec:intro}

    Ramsey theory consists of a considerable amount of mathematical results, which, roughly speaking, say that there is no completely chaotic structure, i.e.\ any sufficiently large structure is guaranteed to have a large well-organised substructure. For instance, the famous theorem of Ramsey \cite{ramsey1929problem} states that for any fixed graph $H$, every $2$-edge-colouring of a sufficiently large complete graph contains a monochromatic copy of $H$. The smallest order of a complete graph with this property is called the \emph{Ramsey number of $H$}.
    
    In this paper we study an analogous phenomenon for oriented graphs. An \emph{oriented graph} is a directed graph $G$ obtained by orienting the edges of a simple undirected graph, which is called the \emph{underlying graph} of $G$.
    
    A \emph{tournament} is an oriented graph whose underlying graph is complete. Given oriented graphs $G,H,K$ we write $G\to (H,K)$ whenever in every $2$-colouring of the edges of $G$ there is a blue copy of $H$ or a red copy of $K$. In the special case that $H=K$, we write $G \to H$. The \emph{oriented Ramsey number} of $H$ is defined to be the smallest $N$ for which every tournament $G$ on $N$ vertices satisfies $G \to H$.

    Note that unlike the standard Ramsey numbers which are always finite, in the oriented setting if $H$ contains a directed cycle then its oriented Ramsey number may be infinite. To see this consider the following colouring: fix an ordering of the vertices and colour all forward edges blue and all backward edges red. This $2$-coloured tournament does not contain any monochromatic directed cycles. In particular, it does not have a monochromatic copy of $H$ if $H$ contains a directed cycle. Moreover, it is easy to see that every $2$-edge-coloured tournament on $N$ vertices contains a monochromatic transitive tournament on roughly $\log_4 N$ vertices, from which it follows that it contains a monochromatic copy of every acyclic graph on at most $\log_4 N$ vertices. Hence, the oriented Ramsey number is finite if and only if $H$ is acyclic. 
    
    Let us start by investigating Ramsey numbers of directed paths. Denote the directed path on $n$ vertices by $\dirpath{n}$, where by a directed path we mean an oriented graph obtained from a path by orienting all its edges in the same direction. The celebrated Gallai-Hasse-Roy-Vitaver theorem \cite{gallai1968directed,hasse1965algebraischen,roy1967nombre,vitaver1962determination} says that any directed graph, whose underlying graph has chromatic number at least $n$, contains a $\dirpath{n}$ as a subgraph. It follows that $G \to \dirpath{n}$ for every tournament $G$ of order at least $(n-1)^2+1$; indeed, given a red and blue colouring of $G$, either the graph of red edges or the graph of blue edges has chromatic number at least $n$, implying the existence of a monochromatic path on at least $n$ vertices. This statement is sharp. To see this, consider a \textit{transitive tournament} on $(n-1)^2$ vertices. We partition the vertices into sets $A_i,$ each of size $n-1$, while preserving the order. We colour all edges inside some set $A_i$ blue, and all other edges red. It is easy to see that there is no monochromatic path on $n$ vertices in this colouring. This shows that the oriented Ramsey number of $\dirpath{n}$ is $(n-1)^2+1$.

    It is interesting to consider oriented Ramsey numbers of further acyclic graphs, and the natural next example are trees. It turns out that oriented trees behave similarly to paths in terms of their oriented Ramsey numbers: it was proved by Buci\'c, Letzter and Sudakov \cite{complete-directed-ramsey} that given any (oriented) tree $T$ on $n$ vertices and any tournament $G$ on $cn^2$ vertices (where $c$ is a positive constant), we have $G \to T$, i.e.\ the oriented Ramsey number of any tree of order $n$ is at most $cn^2$. 
    
    This result resolves (up to a constant factor) the question of, given $n$, finding the smallest $N$ such that every $2$-colouring of every tournament of order $N$ is guaranteed to have a monochromatic copy of $T$ for any tree $T$ of order at most $n$. However, intuitively it seems that examples of tournaments for which the bound is tight are close to being transitive. Therefore, it is natural to ask whether in tournaments that are `far from being transitive' larger monochromatic trees are guaranteed; this question was asked implicitly, for paths, by Ben-Eliezer, Krivelevich and Sudakov \cite{ben2012size}. A natural candidate for such a tournament is the \emph{random tournament}, in which the orientation of each edge is chosen independently and uniformly at random. They showed that, with high probability, every $2$-colouring of a random tournament on $N$ vertices contains a monochromatic directed path of length at least $\frac{cN}{\log N}$. They also showed that every tournament of order $N$ can be $2$-coloured without creating monochromatic paths of length $\frac{3N}{\sqrt{\log N}}$, using the following $2$-colouring of a given tournament $G$ of order $N$. It is well known and easy to see that any tournament of order $N$ has a transitive subtournament of order $\log N$. Using this we can partition the vertices of $G$ into transitive subtournaments $A_i$ of order $\frac{\log N}{2}$ and a remainder $A_0$ of at most $\sqrt{N}$ vertices. We now $2$-colour each of $A_i$, as described above, to ensure that the longest monochromatic path within $A_i$ is of length $\sqrt{|A_i|}$, and we colour the edges in $A_0$ arbitrarily. We then colour all edges from $A_i$ to $A_j$ blue if $i<j$ and red if $i > j$. In this colouring, the longest monochromatic path has length at most $\frac{2N}{\log N} \sqrt{ \frac{\log N}{2}  }+\sqrt{N} \le \frac{3N}{\sqrt{\log N}}.$
    
    In a later paper Buci\'c, Letzter and Sudakov \cite{path-vs-path} showed that, \whp, any $2$-colouring of a random tournament on $N$ vertices contains a monochromatic directed path of order at most $\frac{cN}{\sqrt{\log N}}$, which is tight up to a constant factor, due to the above upper bound from \cite{ben2012size}. They also showed that the same result holds for \textit{oriented paths}, which are paths in which edges are not required to follow the same direction. Following up in this direction, they asked whether the same holds for general oriented trees. The main result of this paper answers this question in the affirmative.
    
    \begin{restatable}{thm}{main} \label{thm:main}
        There is a constant $c > 0$ such that, \whp{}, a random tournament $G$ on $N$ vertices satisfies $G \to T$, where $T$ is any oriented tree on at most $\frac{cN}{\sqrt{\log N}}$ vertices.
    \end{restatable}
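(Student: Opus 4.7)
My plan is to combine a tree decomposition with a strengthened version of the single-path result of \cite{path-vs-path}, thereby reducing embedding of an arbitrary oriented tree $T$ to repeated embedding of short monochromatic oriented paths between prescribed endpoints. Set $L \approx \sqrt{\log N}$ and $n \leq cN/\sqrt{\log N}$. A standard tree-decomposition lemma (via iteratively peeling either many leaves or many vertex-disjoint bare paths of length $L$, following Krivelevich) gives a core subtree $T_0 \subseteq T$ with $|T_0| = O(n/L) = O(N/\log N)$, together with pendant bare paths $P_1, \ldots, P_m$ of length $\leq L$, each attached to $T_0$ at one or both endpoints, so that $T = T_0 \cup P_1 \cup \cdots \cup P_m$ as a union that is vertex-disjoint on internal vertices.

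On the tournament side, with high probability $G$ is strongly quasi-random in the sense that edge counts in both orientations concentrate between any disjoint sets of size $\Omega(\sqrt{N \log N})$. Fix any $2$-colouring of $G$ and let blue be the majority colour. The heart of the argument is a \emph{robust blue path connectivity} statement: there is a linear-sized ``hub'' $H \subseteq V(G)$ such that for any $u, v \in H$, any orientation pattern $\sigma$ of length at most $2L$, and any forbidden set $F \subseteq V(G)$ with $|F| \leq n$, there exists a blue oriented $u$-to-$v$ path inside $H \setminus F$ with orientation pattern $\sigma$. This refines the one-path result of \cite{path-vs-path} by producing many paths of various orientations between many prescribed endpoint pairs, robustly to a linear-sized forbidden set.

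Given the robust path connectivity, the embedding of $T$ is straightforward. First, map $T_0$ greedily into $H$, using the blue connectivity of $H$ to realise each edge of $T_0$ as an appropriate short blue oriented segment (this is easy since $|T_0| \ll |H|$). Then process the pendant paths $P_1, \ldots, P_m$ one by one: each $P_i$ has its endpoints already mapped to some $u_i, v_i \in H$, so I invoke the connectivity tool with $\sigma$ the orientation pattern of $P_i$ and $F$ the union of all vertices used so far. Since $|F| \leq n$ throughout, this succeeds at every step, yielding a monochromatic blue copy of $T$. \textbf{The main obstacle} is establishing the robust path connectivity: upgrading the proof of \cite{path-vs-path} from a single long monochromatic oriented path to a ``many pairs, many orientations, with forbidden set'' statement. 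I expect this to follow by extracting the structural scaffolding used implicitly in \cite{path-vs-path} -- a transitive-like partition of the hub into blocks on which blue paths of arbitrary orientation pattern can be routed -- and using concentration of blue edges in the random tournament to reroute paths around forbidden sets of linear size, possibly with the aid of a reservoir or absorption argument inside each block.
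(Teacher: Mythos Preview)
Your proposal has a genuine gap at its heart: the ``robust blue path connectivity'' statement is false for adversarial $2$-colourings, even of random tournaments. Take the colouring used for the upper bound in \cite{ben2012size} (recalled in the introduction): partition $V(G)$ into transitive subtournaments $A_1,\ldots,A_k$ of order roughly $(\log N)/2$, two-colour inside each $A_i$ so as to destroy long monochromatic paths, and colour every edge directed from $A_i$ to $A_j$ blue if $i<j$ and red if $i>j$. In this colouring a blue \emph{directed} path can never decrease the block index, so if $u\in A_i$ and $v\in A_j$ with $i>j$ then there is no blue directed path from $u$ to $v$ at all. Consequently any hub $H$ satisfying your connectivity hypothesis, even just for the single orientation pattern ``all edges forward'', must lie inside a single $A_i$ and thus has size at most $(\log N)/2$ --- nowhere near linear. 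Since $T$ is allowed to be a directed path, you genuinely need that pattern. Declaring blue the majority colour does not rescue this: which colour carries the monochromatic copy is forced by the structure of the colouring, not by a global edge count, and any correct argument must be prepared to switch colours.

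This is why the paper's route looks nothing like yours. Instead of a one-colour hub, it first proves an \emph{asymmetric} statement --- blue $\dirpath{n}$ versus red directed tree $T$ --- by a case analysis on the blue cycle structure (many medium cycles, many long cycles with no medium ones, or a large set with only short blue cycles). Each case yields a structural gadget (``red-blue pairs'', or an ordering with few forward blue edges) on which one builds an auxiliary complete directed graph and invokes the directed Ramsey theorem for trees. Only then does the paper bootstrap, via $k$-cores, disjoint-paths layers and an in--out split, to directed-tree-versus-directed-tree and finally to arbitrary oriented trees. A secondary issue with your outline: the claimed decomposition does not give $|T_0|=O(n/L)$ in general --- a balanced binary tree has no bare path of length exceeding $2$, so peeling bare paths of length $L$ removes nothing and you are forced to peel leaves individually, after which the pendant pieces are single vertices rather than paths and your connectivity tool is the wrong instrument.
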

    
    Note that unlike for the standard Ramsey numbers, where the ground graph is complete on $N$ vertices, the oriented Ramsey numbers allow any tournament on $N$ vertices as a ground graph. This suggests that taking the ground graph to be the complete directed graph is perhaps a more natural directed analogue of the standard Ramsey theory, where the \emph{complete directed graph} on $n$ vertices, denoted by $\overleftrightarrow K_N$, is the graph in which between any two vertices $i\neq j$ both directed edges, $ij$ and $ji$, are present. Harary and Hell \cite{harary74} and Bermond \cite{bermond1974some} introduced the notion of the \emph{directed Ramsey number} of an oriented graph $H$, which is defined to be the least $N$ such that every $2$-edge-colouring of $\overleftrightarrow K_N$ contains a monochromatic copy of $H$. The directed Ramsey numbers of directed paths were determined by Gy\'arf\'as and Lehel \cite{gyarfas1973ramsey}, based on a result of Raynaud \cite{raynaud1973circuit}, and, independently, by Williamson \cite{williamson73}. Buci\'c, Letzter and Sudakov \cite{complete-directed-ramsey} generalised these results to oriented trees, and also to the $r$-coloured variant. This result plays a role in our argument of the proof for \Cref{thm:main}. 
    
    Note that while the problem for random tournaments is seemingly more similar to the oriented Ramsey numbers, as the base graphs in both cases are tournaments, it turns out the directed Ramsey numbers are more relevant for our arguments. The main reason is that for random tournaments and complete directed graphs between any two not-too-small sets of vertices $A$ and $B$ there are many edges from $A$ to $B.$ However, because this does not hold for smaller sets (i.e.\ of order at most about $\log N$), the bound for random tournaments is somewhat worse than for complete graphs. Our proof of \Cref{thm:main} relies only on a property of this kind, so the conclusion of Theorem \ref{thm:main} actually holds for any sufficiently pseudorandom tournament; we refer the reader to \Cref{sec:prerequisites} for more details.     
    \subsection{Organisation of the paper}
        In the next section we give an overview of the proof of Theorem \ref{thm:main}. In Section \ref{sec:prerequisites} we introduce some results that we will need throughout the rest of the paper. We then turn to the proof of the asymmetric generalisation of \Cref{thm:main}, which we split into two parts. The first part is presented in Section \ref{sec:path-vs-tree} and deals with the special case when one of the trees is assumed to be a directed path. The second part of the argument, presented in Section \ref{sec:tree-vs-tree}, shows how to use this special case to obtain the general result.
    
        We do not make any effort to optimise the constants presented in this paper. We also neglect rounding whenever it is not relevant for the argument. Given a $2$-colouring of a graph, we call the colours red and blue. When we consider paths and trees we always assume they are oriented, i.e.\ between two vertices there is at most one directed edge. Logarithms are always taken in base $2$, unless stated otherwise.

\section{Overview} \label{sec:overview}

    In this section we give an overview of our arguments. Our aim is to prove that given $n$ and $m$, a random tournament $G$ on $N$ vertices satisfies $G \to (T, S)$ for every oriented trees $T$ and $S$ of order $n$ and $m$, respectively, where $N \ge c (n + m + \sqrt{nm \log(n+m)})$ and $c$ is an absolute positive constant. Our proof is divided into two main parts: in the first, we prove it under the assumption that one of $T$ and $S$ is a directed path, and in the second we deduce the general result. In the remainder of this section, we outline the arguments we use in each of these cases.
    
    \subsubsection*{Tree vs.\ path}
        This is the longest and a more difficult part of the proof, here $T$ is assumed to be a \textit{directed tree} (i.e.\ its edges are directed from a root or vice versa) on $m$ vertices, and $S$ is a directed path $\dirpath{n}$. We first prove the desired result under the assumption that $T$ has not-too-many leaves (namely, at most $m^{1/6}$ leaves). Our aim is to find a red copy of $T$ or a blue copy of $\dirpath{n}$.
    
        We distinguish three types of cycles: long cycles (length at least $bm^{1/3}$), short ones (length at most $am^{1/3}$), and medium ones (all remaining cycles). We now consider two cases: when there exists many pairwise vertex-disjoint  medium or long blue cycles, or when there is a large set spanning no medium or long blue cycles; it is easy to see that one of these cases holds.
    
        \paragraph{Case 1. many disjoint medium or long blue cycles.}
    
            In this case we aim to find a specific structure, which we call \emph{red-blue pairs}. This structure consists of many pairwise disjoint sets, $A_1, B_1, \ldots, A_t, B_t$, of suitable size, such that all $A_i - B_i$ edges are red and each set $A_i$ is contained in a blue path $P_i$, where the $P_i$'s are pairwise vertex-disjoint (see Figure \ref{fig:red-blue-pairs}).
            
            We show how to use this structure to find the red tree or the blue path of desired length. To this end, we construct a $2$-edge-coloured auxiliary complete directed graph, where the edge $ij$ is coloured blue if there are many blue edges going from $A_i$ to $A_j$ in $G$ and red otherwise. Applying the directed Ramsey result for trees from (see Theorem \ref{thm:complete-ramsey}) to this auxiliary graph we find a long blue path or a certain carefully chosen red tree (this is obtained from a suitable split of the tree into smaller subtrees which we call a \emph{tree-split}; see Subsection \ref{subsec:tree-splits} and Figure \ref{fig:tree-split}).
    
            If we find a blue path, we lift it to a blue $\dirpath{n}$ in $G$ making use of the blue paths $P_i$ from our structure. If, instead, we find the red tree, we make use of the red bipartite graphs $G[A_i,B_i]$ to embed a subtree of $T$ within it and connect these embeddings in an appropriate fashion to obtain the full $T$.
    
            Finally, we explain how to find red-blue pairs by exploiting assumptions on the blue cycle structure in each of the following two cases.
            \begin{itemize}
                \item[(1a)]
                    Many disjoint medium blue cycles.
                    
                    We define an auxiliary $2$-coloured complete directed graph $H$, whose vertices are medium blue cycles, and for cycles $C_1$ and $C_2$, edge $C_1 C_2$ is blue if a constant fraction of the vertices in $C_1$ have a blue out-neighbour in $C_2$, and otherwise the edge is red. 
                    
                    It is easy to see that there is either a large red-red matching in $H$, which translates into the desired red-blue pairs structure; or there is a long blue path, which translates into a blue $\dirpath{n}$ in the original tournament.
                \item[(1b)]
                    Many disjoint long blue cycles that span no medium blue cycles.
                    
                    In this case we observe that we can find many disjoint blue cycles with no long blue chords. This allows us to obtain a red-blue pairs structure, by letting the sets $A_i$ and $B_i$ be intervals of the long blue cycles.
            \end{itemize}
    
        \paragraph{Case 2. a large set of vertices spanning no medium or long blue cycle.}
    
            We first show that, in this case, there exist many pairwise disjoint sets $U_1, \ldots, U_{\ell}$ of suitable size such that very few of the edges from $U_i$ to $U_j$, with $i < j$, are blue. Using the version of Theorem \ref{thm:main} for paths, which was proved in \cite{path-vs-path}, each set $U_i$ contains many pairs of vertices joined by a long blue path in $U_i$, or many pairs joined by a long red path; in the former case we say that the set $U_i$ is blue, and otherwise we say that it is red. We now consider two cases.
    
            \begin{itemize}
                \item[(2a)]
                    Most of the sets are red.
                    
                    In this case we consider a split of the tree $T$ into subpaths (in Subsection \ref{subsec:tree-splits} we show how to obtain such a \emph{path-split}). We embed each subpath within a specific $U_i$, where we exploit the fact that we have many options for both start and end vertex of the subpath and the fact that most of the forward edges between the $U_i$'s are red, to embed and connect the paths and obtain a red $T$.
                \item[(2b)]
                    Most of the sets are blue.
                    
                    We define an auxiliary $2$-coloured complete directed graph $K$ whose vertices are the blue $U_i$'s, and an edge $U_iU_j$ is coloured blue if $i>j$ and if there is a blue edge from every large subset of $U_i$ to every large subset of $U_j$, and red otherwise.  
                    
                    As before, we note that $K$ contains either a large red-red matching, or a long blue directed path. In the latter case we lift the path to a blue $\dirpath{n}$ in $G$. If the former holds, we find many large bipartite graphs, corresponding to edges of the matching, such that almost all of their edges are red. We use these graphs and the fact that almost all forward edges between sets $U_i$ are red to embed a red $T$, similarly to the first case.
            \end{itemize}
    
    \paragraph{Removing the restriction on the number of leaves.}
     Throughout Subsection \ref{ssec:few-leaves} we were assuming that $T$ has at most $m^{1/6}$ leaves, which was necessary in order to control the number of subtrees we obtain in various splits of $T$. In Subsection \ref{ssec:general-tree-vs-path} we show how to remove this assumption. For this we introduce another kind of split of $T$ which we call the \textit{core-split} (see Subsection \ref{subsec:tree-splits} for details) which splits $T$ into not too many subtrees, each of which has at most $m^{1/6}$ leaves. Assuming there is no blue $\dirpath{n}$ we find a short sequence of large sets such that each has a large number of red out-neighbours in the next set of the sequence. This we can do because otherwise we show there is a set which has a lot of blue edges which allow us to find the blue $\dirpath{n}$. Finally, we iteratively find parts of the core-split (or find a blue $\dirpath{n}$) within these sets using the result from the previous subsection, where we use the large red out-degree towards the next set to ensure we can join all the pieces into a red copy of $T$.

    \subsubsection*{Tree vs.\ Tree}
    
        The rest of the argument consists of three intermediate steps, which generalise the result obtained in the previous section, with the final goal being a version of Theorem \ref{thm:main} for general trees $T$ and $S$.
        
        \paragraph{Step 1. directed tree vs.\ directed tree with $O(1)$ leaves.}
            Let $T$ be out-directed with $O(1)$ leaves, and let $S$ be a directed tree. We observe that if we remove paths from a directed tree $T$, that start at any leaf and stop right before a branching vertex or the root, then the resulting tree $T'$ has at most half the number of leaves of $T$. We iterate a procedure which reduces the search for a red $T$ or a blue $S$ to a search of red $T'$ or blue $S$, using the previous case of path vs.\ tree. 
            
        \paragraph{Step 2. directed tree vs.\ directed tree.}
            Let $T$ and $S$ be out-directed trees. Our aim is to iterate a procedure that reduces the search of a red $T$ or a blue $S$ to a search for a red $T_1$ or a blue $S_1$, where the order of $T_1$ and $S_1$ is smaller than the order of $T$ and $S$ by at least a constant factor. To that end, we consider the $k$-core of a tree $T$, which is the subtree $T'$ consisting of vertices whose number of descendants is at least $|T|/k$. One can show that $T'$ has at most $k$ leaves and that the trees in the forest $T \setminus V(T')$ have order at most $|T|/k$ (see Definition \ref{def:core}). We make use of the previous step which tells us that we can find a red $T'$ or a blue $S$, if $T'$ is the $k$-core of $T$, where $k = O(1)$. Subsequently, we try to embed the trees in $T \setminus V(T')$ within the correct out-neighbourhoods. If we succeed we found a red $T$, otherwise the tree at which we fail is our $T_1.$ We repeat in blue to obtain $S_1$ and iterate until one of the trees drops to constant size when we once again appeal to the previous result.

        \paragraph{Step 3. tree vs.\ tree.}
            Here we rely on the following idea: if $A$ and $B$ are sets such that every vertex in $A$ has large out-degree in $B$ and the vertices in $B$ have large in-degree in $A$, then given a general tree in $T$, we can aim to embed in-directed subtrees of $T$ in $A$ and out-directed subtrees of $T$ in $B$, using the large degrees between the two sets to connect such subtrees. This idea allows us to go from the previous step, where we search for monochromatic directed trees, to a search for a red directed tree or a blue general tree. We then apply this idea again to obtain the desired result for two general trees.
    
\section{Prerequisites} \label{sec:prerequisites}

    In this section we mention some useful facts which we shall use throughout the proof. First, we introduce the notion of pseudorandomness. Let $G$ be an oriented graph. For two disjoint subsets $A,B$ of the vertices we denote by $e_G(A,B)$ the number of edges directed from $A$ towards $B$; when the graph $G$ is clear from the context, we omit the subscript $G$. For a vertex $v$ we denote the out and in-degree of $v$ by $d^+(v)$ and $d^-(v)$.

    \begin{defn} \label{def:pseudo}
        Let $ 0 < \eps< \frac{1}{2}$ and let $k$ be an integer. An oriented graph $G$ is $(\eps,k)$\emph{-pseudorandom} if for any disjoint sets $A,B\subseteq V(G)$ of size at least $k$ we have $e(A,B) \ge \eps |A||B|$.
    \end{defn}

    It is easy to see, e.g., by Chernoff's inequality, that a random tournament is pseudorandom \whp, as stated in the following lemma (see Lemma 6 in \cite{path-vs-path}). In fact, this is the only property of a random tournament that we shall use in our argument.
    \begin{lem} \label{lem:random-pseudorandom}
        Let $0<\eps<\frac{1}{2}$. There exists a constant $\sigma$ such that a random tournament $T$ is $(\eps,\sigma \log |T|)$-pseudorandom, with high probability.
    \end{lem}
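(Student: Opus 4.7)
The plan is a routine Chernoff union bound; this lemma is essentially the standard observation that a random tournament is quasirandom at scale $\Theta(\log N)$. Write $N = |T|$ and set $k = \sigma \log N$, where $\sigma$ will be chosen large in terms of $\eps$.

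First I would fix disjoint sets $A, B \subseteq V(T)$ with $|A| = a \geq k$ and $|B| = b \geq k$. Since each of the $ab$ potential edges between $A$ and $B$ is oriented from $A$ to $B$ independently with probability $1/2$, the quantity $e(A, B)$ is distributed as $\mathrm{Bin}(ab, 1/2)$ with mean $ab/2$. Because $\eps < 1/2$, Chernoff's inequality yields
\[
\pr\bigl[e(A, B) < \eps \cdot ab\bigr] \;\leq\; \exp\!\bigl(-(1 - 2\eps)^2 ab / 4\bigr).
\]

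Next I would union bound over all disjoint pairs $(A, B)$. The number of pairs with $|A| = a$ and $|B| = b$ is at most $\binom{N}{a}\binom{N}{b} \leq N^{a+b}$. Since $a, b \geq k$, we have $ab \geq k \cdot \max(a, b) \geq k(a+b)/2$, so the combined bound is
\[
N^{a+b} \exp\!\bigl(-(1 - 2\eps)^2 ab / 4\bigr) \;\leq\; \exp\!\bigl((a+b)\ln N - (1-2\eps)^2 k (a+b)/8\bigr).
\]
Choosing $\sigma$ large enough in terms of $\eps$ (e.g.\ any $\sigma > 16/((1-2\eps)^2 \ln 2)$ works) makes the right-hand side at most $N^{-(a+b)}$. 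Summing the resulting bound as a geometric series over $a, b \geq k$ gives a total failure probability of order $N^{-2k} = o(1)$, proving the lemma.

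There is no real obstacle: the argument is entirely standard. The only point that requires any care is calibrating $\sigma$ as a function of $\eps$ so that the Chernoff exponent dominates the combinatorial $\binom{N}{a}\binom{N}{b}$ factor arising in the union bound; once $k = \sigma \log N$ is chosen accordingly, everything else is automatic.
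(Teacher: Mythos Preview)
Your proposal is correct and follows precisely the approach the paper indicates: the paper does not spell out a proof here but simply remarks that the lemma follows ``by Chernoff's inequality'' and cites Lemma~6 of \cite{path-vs-path}, and your Chernoff-plus-union-bound computation is exactly the standard argument being invoked. The constants are not optimised (your choice of $\sigma$ is a bit larger than necessary), but this is immaterial.
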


    We shall investigate $2$-colourings of graphs, where the colours are called red and blue. Therefore, we extend the notation related to edges by an index $b$ for blue and $r$ for red edges. E.g., $e_r(A,B)$ denotes the number of red edges going from $A$ to $B$ and similarly $d_b^+(v)$ is the blue out-degree of vertex $v$.
    
    The following lemma gives a lower bound on the number of blue edges in a subset of the vertices which contains no red copy of a particular tree. This will allow us to find large sets where every vertex has many red neighbours.
    
    \begin{lem} \label{lem:tree-by-number-of-edges}
        Let $G$ be an $(\eps, \sigma \log N)$-pseudorandom $2$-coloured tournament on $N$ vertices. Suppose that $U \subseteq V(G)$ has the following properties.
        \begin{enumerate}[(i)]
            \item the induced graph $G[U]$ has at most $\frac{\eps^2}{32}|U|^2$ blue edges and
            \item  $\frac{\eps}{4}|U| \ge \sigma \log N$.
        \end{enumerate}
        Then the graph $G$ contains a red copy of any tree of size $\frac{\eps}{4}|U|$.
    \end{lem}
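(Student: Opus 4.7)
The plan is to locate a subset $W \subseteq U$ such that every $v \in W$ has red out-degree and red in-degree at least $k := \eps|U|/4$ inside $W$, and then apply a greedy DFS embedding to produce a red copy of any oriented tree $T$ on $k$ vertices. Concretely, once $W$ is in hand I would root $T$ arbitrarily, process $V(T)$ in BFS order, and at each step embed the current vertex at an unused red neighbour (out or in, according to the tree edge) of the image of its parent; since the parent's image has at least $k$ red neighbours of the correct type in $W$ while at most $k-1$ vertices are already used, such a choice always exists.

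To construct $W$, I would fix a threshold $T$ slightly larger than $k$ (say $T = \eps|U|/2$), set $L^+ := \{v \in U : d^+_{U,r}(v) < T\}$ and $L^-$ analogously, and take $W := U \setminus (L^+ \cup L^-)$. I then need $|L^+| + |L^-|$ to be small enough that vertices of $W$ retain red out/in-degrees $\ge k$ once restricted to $W$. To bound $|L^+|$, I combine pseudorandomness and the blue-edge bound: pseudorandomness gives $e_G(L^+, U \setminus L^+) \ge \eps|L^+| \cdot |U \setminus L^+|$ whenever both sides are at least $\sigma\log N$, while the same quantity is at most $T|L^+|$ (from the defining red out-degree bound on $L^+$) plus $e_b(U) \le \eps^2|U|^2/32$. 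Rearranging yields a quadratic inequality in $\alpha := |L^+|/|U|$ whose small-root branch gives $|L^+| = O(\eps|U|)$, while the large-root branch forces $\alpha$ to be bounded away from $0$; I would rule out the latter by noting that for large $|L^+|$ the definition of $L^+$ caps the red edges inside $L^+$ at $T|L^+|$, so the remaining edges inside $L^+$ are blue, eventually exceeding the global blue budget $\eps^2|U|^2/32$. A symmetric argument bounds $|L^-|$, so $|U \setminus W|$ is small and every $v \in W$ satisfies $d^+_{W,r}(v) \ge T - |U \setminus W| \ge k$ and similarly for in-degrees.

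The hard part will be calibrating $T$ so that simultaneously the small-root bound on $|L^+| + |L^-|$ is at most $T - k$ (so the degrees survive the restriction to $W$) and the blue-edge contradiction rules out the large-root branch uniformly across $\eps \in (0, 1/2)$. If the two-case analysis turns out to be tight near $\eps = 1/2$, I would be prepared to precede the above by a cheap Markov step that first discards the $O(\eps|U|)$ vertices with very large blue out- or in-degree, reducing to a sub-tournament where the red graph is essentially complete before applying the pseudorandomness argument.
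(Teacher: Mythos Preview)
Your approach is essentially the paper's: define the sets of vertices with low red out-/in-degree in $U$, show they are small, and then greedily embed the tree in the complement. The paper, however, sidesteps your entire quadratic/two-root analysis (and hence the need for your Markov fallback near $\eps=1/2$) with one clean move.

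The paper sets the threshold higher, at $\tfrac{3\eps}{4}|U|$, defining $X^{\pm}=\{v\in U:d_r^{\pm}(v)<\tfrac{3\eps}{4}|U|\}$, and then, assuming for contradiction that $|X^+|\ge\tfrac{\eps}{4}|U|$, it simply \emph{chooses a subset $Y\subseteq X^+$ of size exactly $\tfrac{\eps}{4}|U|$} and applies pseudorandomness to the pair $(Y,U\setminus Y)$. Fixing $|Y|$ pins down both sides numerically: $e(Y,U\setminus Y)\ge \eps|Y|(|U|-|Y|)\ge \tfrac{7\eps^2}{32}|U|^2$ from pseudorandomness, while $e(Y,U\setminus Y)\le |Y|\cdot\tfrac{3\eps}{4}|U|+\tfrac{\eps^2}{32}|U|^2=\tfrac{7\eps^2}{32}|U|^2$ from the degree bound plus the blue budget. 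No quadratic, no large-root case, and the arithmetic works uniformly for all $\eps<1/2$. Then $|X^+|,|X^-|<\tfrac{\eps}{4}|U|$, so on $U\setminus(X^+\cup X^-)$ the red min in-/out-degree is at least $\tfrac{3\eps}{4}|U|-\tfrac{2\eps}{4}|U|=\tfrac{\eps}{4}|U|$, and the greedy embedding finishes exactly as you describe.

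So your plan is correct, but you are making your own life harder: raise the threshold to $\tfrac{3\eps}{4}|U|$ and pass to a subset of $L^+$ of the minimal pseudorandomness-eligible size $\tfrac{\eps}{4}|U|$, and the whole calibration issue disappears.
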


    \begin{proof}
        Let us consider two sets
        \begin{align*}
            X^{+}=\bigg\{ v \in U \ \Big| \  d_r^{+}(v) < \frac{3\eps}{4} |U|\bigg\} \quad \text{and} \quad
            X^{-}=\bigg\{ v \in U \ \Big| \  d_r^{-}(v) < \frac{3\eps}{4} |U|\bigg\},
        \end{align*}
        where the degrees are with respect to the induced subgraph $G[U]$. We are going to show that both sets have size at most $\frac{\eps}{4} |U|$. If this is the case then the induced graph $G[U\setminus (X^+ \cup X^-)]$ has minimum red in and out-degree at least $\frac{3\eps}{4}|U| -\frac{2\eps}{4} |U|=\frac{\eps}{4} |U|$ and then we can greedily find any red tree of size at most $\frac{\eps}{4} |U|$.

        So let us assume that $|X^+| \ge \frac{\eps}{4} |U|$; the argument for $X^-$ is analogous. Let us pick any $\frac{\eps}{4} |U|$ vertices from $X^+$ and denote this set by $Y$. Then
            $$ e_r(Y,U\setminus Y) < |Y| \cdot \frac{3\eps}{4} |U|=\frac{3\eps^2}{16} |U|^2.$$   
        By the first assumption on $U$, we have 
        \begin{equation} \label{eqn:upper-bd}
            e(Y,U \setminus Y) 
            \le e_r(Y, U \setminus Y) + e_b(Y, U \setminus Y) 
            < \left( \frac{3}{16}+ \frac{1}{32} \right) \eps^2 |U|^2 
            = \frac{7 \eps^2}{32} |U|^2.
        \end{equation}
        However, by pseudorandomness and the lower bound on $|U|$, we have
            $$ e(Y,U\setminus Y) \ge \eps |Y|(|U|-|Y|)\ge \frac{\eps^2}{4}\left(1-\frac{\eps}{4}\right) |U|^2 \ge \frac{7 \eps^2}{32} |U|^2,$$
        where the last inequality follows as $\eps < 1/2$. This is a contradiction to \eqref{eqn:upper-bd}, which implies that $|X^+| < \frac{\eps}{4} |U|$, as required.
    \end{proof}

     A \emph{rooted tree} is a tree with a special vertex which we call the \emph{root}. By removing a vertex $v$ in a rooted tree $T$ we obtain a forest $F$. The \emph{descendants} of $v$ are the vertices of $T$ that are not in the tree in $F$ which contains the root; note that each vertex is a descendant of itself.
   
    \begin{defn} \label{def:core}
        Let $T$ be a rooted tree on $n$ vertices and let $k>1$. The $k$\emph{-core} of $T$ is the subtree of $T$ consisting of vertices that have more than $n/k$ descendants in $T$.
    \end{defn}

    \begin{obs} \label{obs:core}
        Let $T$ be a tree on $n$ vertices and let $T'$ be its $k$-core for $k>1$. Then $T'$ has at most $k$ leaves and every tree of the forest $T\setminus V(T')$ has order at most $n/k$.
    \end{obs}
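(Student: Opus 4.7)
The plan is to first establish that $T'$ is genuinely a subtree of $T$ (i.e.\ connected), and then derive both statements from the nested structure of descendant sets. The key fact is that if $u$ is the parent of $v$ in $T$, then the set of descendants of $v$ is contained in the set of descendants of $u$; in particular, if $v \in T'$ then $u \in T'$. Iterating this, $T'$ contains every ancestor of each of its vertices, hence it contains the root (which has all $n$ descendants) and is a connected subtree.

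For the bound on leaves, I would let $v_1, \ldots, v_\ell$ be the leaves of $T'$ and claim that their descendant sets in $T$ are pairwise disjoint. Indeed, for any two distinct vertices $v_i, v_j$ of a rooted tree, either their descendant sets are disjoint or one contains the other. If $v_j$ were a descendant of $v_i$ in $T$, then every vertex on the $v_i$--$v_j$ path would have at least as many descendants as $v_j$ and hence lie in $T'$, contradicting the assumption that $v_i$ is a leaf of $T'$. Since each $v_i \in T'$ has strictly more than $n/k$ descendants, and the disjoint union of descendant sets fits inside $V(T)$, we get $\ell \cdot (n/k) < n$, giving $\ell < k$ and therefore $\ell \le k$.

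For the second claim, let $S$ be any connected component of the forest $T \setminus V(T')$ and let $u$ be the vertex of $S$ closest to the root of $T$. I would show that $S$ equals the entire subtree of $T$ rooted at $u$. Any vertex $w$ in that subtree that happened to lie in $T'$ would, by the ancestor-closure property of $T'$ established above, force $u \in T'$, contradicting $u \in S$. Hence the whole subtree rooted at $u$ lies in $T \setminus V(T')$, and since it is connected and contains $u$, it is exactly $S$. As $u \notin T'$, the vertex $u$ has at most $n/k$ descendants, and therefore $|S| \le n/k$.

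The only subtlety, and thus the main thing to get right, is the ancestor-closure property of $T'$ and its consequence that the path between two vertices of $T'$ lies entirely in $T'$; everything else is a clean counting/containment argument with no analytic content. Once that is in place, both statements follow by the disjointness-of-descendants argument and the identification of a component of $T\setminus V(T')$ with a full subtree of $T$, respectively.
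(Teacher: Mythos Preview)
Your approach is essentially the same as the paper's, and you fill in details the paper leaves implicit (connectivity of $T'$ via ancestor-closure, and the identification of each component of $T\setminus V(T')$ with a full subtree of $T$). There is, however, one small gap in the leaf count.

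You claim that the descendant sets of \emph{all} leaves $v_1,\ldots,v_\ell$ of $T'$ are pairwise disjoint, arguing that if $v_j$ were a descendant of $v_i$ then the $v_i$--$v_j$ path would lie in $T'$, contradicting that $v_i$ is a leaf. But this contradiction only fires when $v_i$ acquires a second neighbour in $T'$: the child on the path \emph{and} its parent (which lies in $T'$ by ancestor-closure). When $v_i$ is the root of $T$, it has no parent, so one child in $T'$ is compatible with being a leaf, and indeed the root's descendant set is all of $V(T)$, which is never disjoint from any other leaf's. Concretely, if $T$ is a directed path rooted at an endpoint, then $T'$ is a sub-path and the root is a leaf of $T'$ whose descendant set contains that of the other leaf.

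The paper sidesteps this by counting only \emph{non-root} leaves of $T'$: for those your disjointness argument goes through verbatim (each has its parent in $T'$), giving fewer than $k$ of them, and then adding back the root yields at most $k$ leaves in total. With that one-line adjustment your proof is complete and matches the paper's.
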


    \begin{proof}
        Suppose that $T'$ has $k$ non-root leaves (the root of $T'$ is the root of $T$). The sets of descendants in $T$ of each leaf of $T'$ are disjoint and have size greater than $n/k$. This implies that $T$ has order greater than $n$, a contradiction. Therefore, $T'$ has at most $k-1$ non-root leaves, so in total it has at most $k$ leaves. 

        Let $S$ be a tree in the forest $T\setminus V(T')$. Suppose $|S| > n/k$, then the root $v$ of $S$ has more than  $n/k$ descendants, but then $v$ should be in $T'$, a contradiction.
    \end{proof}
    
    The next result makes it possible to bound the number of vertices with degree at least $3$ in the underlying graph; we call such vertices \textit{branching}. Note that a tree is a path if and only if it has no branching vertices. Let $\lf(T)$ be the number of leaves in a tree $T$.

    \begin{lem} \label{lem:branching}
        The number of branching vertices is at most $\lf(T) - 1$.
    \end{lem}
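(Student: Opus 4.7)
The plan is to run a short degree-counting argument based on the handshake lemma. First I would partition the vertices of $T$ according to their degree in the underlying tree into three classes: the $L := \lf(T)$ leaves (degree $1$), the $P$ vertices of degree exactly $2$, and the $B$ branching vertices (degree at least $3$). This gives $|T| = L + P + B$.

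Next I would apply the handshake lemma $\sum_v d(v) = 2|E(T)| = 2(|T|-1)$. Splitting the sum by degree class and lower bounding the contribution of each branching vertex by $3$ yields $L + 2P + 3B \leq 2(L + P + B) - 2$. Cancelling $L + 2P$ from both sides and rearranging then forces $B \leq L - 2$, which is in fact slightly stronger than the required bound $B \leq L - 1$.

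I do not anticipate any real obstacle here; the argument is essentially one line once the degree partition is set up. The only minor care needed is in the degenerate case of very small $T$ (e.g.\ a single vertex, where conventions on leaves may differ), which can be checked by hand and does not affect the use of the lemma later in the paper.
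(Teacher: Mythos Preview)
Your counting argument via the handshake lemma is correct and in fact yields the slightly sharper bound $B \le \lf(T) - 2$ for any tree on at least two vertices. The paper takes a different route: it argues by induction on $\lf(T)$, with the base case $\lf(T) \le 2$ (a path, hence no branching vertices), and for the inductive step removes the path from a leaf up to, but not including, the nearest branching vertex $w$; this drops the leaf count by one while removing at most one branching vertex (namely $w$, if its degree was exactly $3$). Your approach is shorter and more direct, and the extra $-1$ you gain is genuine (though never needed later in the paper). The paper's peeling-off-a-path argument is a bit more structural and echoes the kind of decomposition used in the tree-split and path-split lemmas, but for this lemma alone your degree count is the cleaner proof. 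Your caveat about the single-vertex tree is well placed: with the standard convention that a leaf has degree exactly $1$, the stated inequality fails there, but this edge case is irrelevant to the applications.
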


    \begin{proof}
        We argue by induction on the number of leaves $k:= \lf(T)$. If $k \le 2$ the tree is a path and paths do not have any branching vertices.

        For the induction step we assume that the statement holds for all trees with $k-1$ leaves. Let $v$ be any leaf of $T$ and $P$ a path from $v$ to the first vertex adjacent to a branching vertex $w$, which exists as $k\ge 3$. Then $T\setminus V(P)$ is a tree with $k-1$ leaves and by induction has at most $k-2$ branching vertices. It follows that the number of branching vertices in $T$ is at most $k-1$ (as $w$ is the only vertex that is branching in $T$ but need not be branching in $T \setminus V(P)$).
    \end{proof}

    We call an oriented tree $T$ \emph{out-directed}, if there is a vertex $v$, which we call the \emph{root} of $T$, such that all the edges in $T$ are directed away from $v$. Similarly we define an \emph{in-directed} tree to have all edges directed towards $v$. A \emph{directed} tree is an out-directed tree or an in-directed tree.

    \begin{obs} \label{obs:transitive}
        Let $T$ be a directed tree on $n$ vertices. Then it is a subgraph of any transitive tournament $G$ on at least $n$ vertices.
    \end{obs}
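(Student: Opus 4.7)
The plan is to reduce to the out-directed case (the in-directed case follows by reversing all edges, since a transitive tournament remains transitive under reversal) and to embed the tree by matching a topological order of $T$ into the natural linear order of $G$. Fix the labelling $v_1, v_2, \ldots, v_N$ of $V(G)$ so that $v_i \to v_j$ whenever $i < j$, which exists because $G$ is transitive.

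Assume first that $T$ is out-directed with root $r$. I would orient $T$ by its natural partial order ``$u$ precedes $w$ if $u$ lies on the unique directed path from $r$ to $w$'' and take any linear extension of this partial order; for concreteness, one can use a BFS or DFS from $r$, producing an ordering $u_1, u_2, \ldots, u_n$ of $V(T)$ with $u_1 = r$ such that for every edge $u_i \to u_j$ of $T$ we have $i < j$ (every non-root vertex appears later than its parent). Define $\varphi \colon V(T) \to V(G)$ by $\varphi(u_i) = v_i$; this is well defined since $N \ge n$, and injective by construction. For every edge $u_i \to u_j$ of $T$ we have $i < j$, hence $\varphi(u_i) \to \varphi(u_j)$ is an edge of $G$, so $\varphi$ is the desired embedding.

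If instead $T$ is in-directed, apply the above to the out-directed tree obtained by reversing all edges of $T$, and then reverse back; equivalently, use a reverse topological order (leaves first, root last) on $T$ together with the same labelling of $G$.

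There is essentially no obstacle here: the only thing to verify is the existence of a topological ordering of a directed tree, which is standard (induction on $n$: remove a leaf that is not the root, order the remainder, then append the leaf at the correct position). The observation will be invoked later whenever one needs to embed a small directed tree inside a transitive subtournament produced along the argument.
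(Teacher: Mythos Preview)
Your proof is correct and essentially identical to the paper's: both reduce to the out-directed case, take a topological ordering of $T$ (the paper uses DFS specifically, you allow any linear extension), and map the $i$-th vertex of $T$ to the $i$-th vertex in the transitive order of $G$.
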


    \begin{proof}
        We assume, without loss of generality, that $T$ is out-directed. Let $N=|G|$. Since $G$ is transitive there exists an ordering of the vertices $u_1,u_2,\dots,u_N$, such that all edges are directed towards the higher index. Let $v_1,v_2,\dots,v_n$ be an ordering of the vertices of $T$ obtained by a depth first search algorithm starting at the root $v$ of $T$. Since $T$ is out-directed, the ordering has the property that all edges of $T$ are directed towards a higher index and we can embed $v_i$ in $u_i$ for every $i\in[n]$. 
    \end{proof}

    A leaf of an oriented tree is an \emph{out-leaf} if its out-degree is $0$ and an \emph{in-leaf} if its in-degree is $0$. Note that for an out-directed tree the only in-leaf is the root and all other leaves are out-leaves. In fact every out-leaf is a leaf itself.
    
\section{Tree vs.\ path} \label{sec:path-vs-tree}

    In this section we prove a special case of Theorem \ref{thm:main} for a directed tree vs.\ a directed path; here the random tournament is replaced by a pseudorandom tournament.
    
    \begin{thm} \label{thm:tree-vs-path}
       Given $0<\eps< \frac{1}{2}$ and $\sigma > 0$, there exists a constant $c > 0$ such that the following holds. Let $G$ be a tournament on $N$ vertices which is $(\eps, \sigma \log N)$-pseudorandom. Then $ G \rightarrow (\dirpath{n}, T)$, where $T$ is any directed tree on $m$ vertices, as long as $n$, $m \le N/c$ and $nm \le \frac{N^2}{c^2\log N}$.
    \end{thm}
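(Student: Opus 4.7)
The plan is to assume, without loss of generality, that $T$ is out-directed (the in-directed case is symmetric), and to split the argument into two parts. I first prove the theorem under the additional assumption that $T$ has at most $m^{1/6}$ leaves, and then remove this assumption via the \emph{core-split} from \Cref{def:core}. For the few-leaves step, I fix constants $a < b$ and classify blue cycles in $G$ as \emph{short} (length at most $am^{1/3}$), \emph{medium} (length between $am^{1/3}$ and $bm^{1/3}$), or \emph{long} (length at least $bm^{1/3}$). Either (1) $G$ contains many pairwise vertex-disjoint medium or long blue cycles, or (2) a large set $U \subseteq V(G)$ spans no such cycle.

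In Case (1), I split further into (1a) many disjoint medium blue cycles and (1b) many disjoint long blue cycles spanning no medium blue cycle. Both subcases aim to produce a \emph{red-blue pairs} structure: pairwise disjoint sets $A_1, B_1, \ldots, A_t, B_t$ such that $G[A_i, B_i]$ is entirely red and each $A_i$ lies inside a blue directed path $P_i$, with the $P_i$'s vertex-disjoint. In (1a) I build an auxiliary tournament $H$ on the medium cycles with $C_1 C_2$ coloured blue iff a constant fraction of $C_1$ has a blue out-edge to $C_2$; either a long blue path in $H$ lifts, via detours around each medium cycle, to a blue $\dirpath{n}$ in $G$, or a large red matching in $H$ supplies cycle pairs from which I extract $A_i \subseteq C_i$ and $B_i \subseteq D_i$ with all $A_i$--$B_i$ edges red, while $P_i$ is obtained by opening $C_i$ at one vertex. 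In (1b), using that long cycles span no medium cycle I can refine to long cycles without long blue chords, and let $A_i, B_i$ be two short arcs of the same long cycle.

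Given a red-blue pairs structure I build a second auxiliary $2$-coloured complete directed graph on $[t]$, declaring $ij$ blue if $e_b(A_i, A_j)$ is large and red otherwise. I apply the directed Ramsey theorem for trees (\Cref{thm:complete-ramsey}) to a suitable \emph{tree-split} of $T$ (described in Subsection~\ref{subsec:tree-splits}): either a long blue path in the auxiliary graph lifts to a blue $\dirpath{n}$ in $G$ by threading through the $P_i$'s using the many blue $A_i \to A_j$ edges, or a red copy of the tree-split is found, which I embed piece by piece into the red bipartite graphs $G[A_i, B_i]$ and join using red edges between distinct pairs. In Case (2), pseudorandomness and \Cref{lem:tree-by-number-of-edges} yield many pairwise disjoint sets $U_1, \ldots, U_\ell$ with $e_b(U_i, U_j)$ small for $i < j$. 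Applying the path-vs-path result of \cite{path-vs-path} inside each $U_i$ labels it red or blue. If most are red, I embed a path-split of $T$, placing each subpath inside a red $U_i$ and connecting consecutive subpaths via red forward edges; if most are blue, I repeat the Ramsey-plus-tree-split scheme of Case (1), after defining a third auxiliary graph on the blue $U_i$'s where $U_i U_j$ is blue iff $i > j$ and all large subsets of $U_i$ send blue edges to all large subsets of $U_j$.

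Finally, to drop the leaf restriction I take the core-split of $T$ into $O(1)$-many subtrees, each with at most $m^{1/6}$ leaves. Assuming $G$ contains no blue $\dirpath{n}$, I first produce a short sequence of large sets $V_1, \ldots, V_s$ such that every vertex of $V_i$ has many red out-neighbours in $V_{i+1}$; the existence of such a sequence follows because its failure would concentrate blue edges in some set and yield the blue path via pseudorandomness and the path-vs-path theorem of \cite{path-vs-path}. Embedding the core-split pieces successively inside these sets using the few-leaves case, and stitching them with the abundant red $V_i \to V_{i+1}$ edges, produces the red $T$. The main obstacle is the tree-vs-path Case~(1): choosing the tree-split so that the directed Ramsey output matches exactly what the red-blue pairs can accommodate, while simultaneously allowing the alternative blue path in the auxiliary graph to lift to a $\dirpath{n}$ of the required length, with all parameter sizes calibrated to the constraint $nm \le N^2/(c^2 \log N)$.
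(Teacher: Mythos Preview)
Your proposal follows essentially the same approach as the paper: the same two-stage reduction (few leaves first, then core-split), the same short/medium/long cycle dichotomy, the same red-blue pairs structure harvested in subcases (1a) and (1b), the same auxiliary-graph-plus-tree-split argument via \Cref{thm:complete-ramsey}, and the same Case~2 treatment with the path-split and the third auxiliary graph. Two small corrections: in Case~2 the sets $U_i$ with few forward blue edges come not from \Cref{lem:tree-by-number-of-edges} but from the absence of medium/long blue cycles (a vertex of small blue out-degree always exists, giving an ordering), and in the core-split step the blue $\dirpath{n}$ in the failure case is obtained via \Cref{lem:tree-by-number-of-edges} rather than the path-vs-path theorem.
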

    
    As an intermediate result we prove it first for trees with relatively few leaves (see Subsection \ref{ssec:few-leaves}), we then prove Theorem \ref{thm:tree-vs-path} in Subsection \ref{ssec:general-tree-vs-path}. Before turning to the proofs, we discuss three types of tree splits, which we shall use in the proofs.
    
    \subsection{Tree splits} \label{subsec:tree-splits}
        Our proofs in this section make use of several tree splits; we present them here.
        
        \subsubsection*{The $(c, \alpha)$-tree-split}
    
            Let $T$ be an out-directed tree and let $T'$ be a subtree of $T$. An \emph{extending-leaf} of $T'$ with respect to $T$ is an out-leaf (i.e.\ a non-root leaf) of $T'$, which is not a leaf of $T$. Whenever $T$ is clear from the context we do not mention it.

            \begin{lem} \label{lem:tree-split}
                Let $c\ge2$ and $0<\alpha \le (2c)^{-1}$. Let $T$ be an out-directed tree on $m$ vertices with at most $m^\alpha$ leaves. Then there is a partition of the vertices into subtrees $T_1,\dots,T_\ell$ such that the following properties hold.
                \begin{enumerate}[(i)]
                    \item \label{itm:tree-split-one-in-edge}
                        For every $i\in[\ell]$ there is at most one in-edge towards a vertex of $T_i$ in $T$, and if present it is towards the root of $T_i$, 
                    \item \label{itm:tree-split-extend-leaf}
                        the only vertices with out-edges leaving $T_i$ are extending-leaves of $T_i$,
                    \item \label{itm:tree-split-odd-level}
                        each extending-leaf of $T_i$ lies in an even level (i.e.\ its distance to the root of $T_i$ is even) and it has out-degree exactly one in $T$,
                    \item \label{itm:tree-split-size}
                        $|T_i| \le 6 m^{c\alpha},$ for all $i \in [\ell],$ and
                    \item \label{itm:tree-split-number}
                        $\ell \le 2 m^{1-c\alpha}$.
                \end{enumerate}
            \end{lem}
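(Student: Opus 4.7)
I plan to construct the partition greedily in a top-down fashion, building one subtree $T_i$ at a time. The first subtree is rooted at the root of $T$, and each subsequent subtree is rooted at the unique $T$-child of an extending-leaf of some previously-built $T_j$. To build $T_i$ from its designated root $r$, I grow it by BFS through $T$, adding complete levels one at a time; setting $k := m^{c\alpha}$, I stop at the first even depth $d$ (relative to $r$) at which $|T_i| \ge k$ and no vertex on the current frontier (at depth exactly $d$) is branching in $T$. The frontier then consists entirely of leaves of $T$ and chain vertices; the chain vertices become the extending-leaves of $T_i$, and their unique $T$-children are queued as roots of new subtrees. If $T(r)$ is exhausted before the stopping criterion fires, I simply set $T_i := T(r)$, which is a terminal subtree with no extending-leaves.

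Conditions \ref{itm:tree-split-one-in-edge} and \ref{itm:tree-split-extend-leaf} would hold automatically by the BFS construction: $T_i$ is downward-closed up to depth $d$, so the only in-edge entering $T_i$ from outside is the parent edge of $r$, and every vertex of $T_i$ with depth $< d$ has all its $T$-children inside $T_i$. Condition \ref{itm:tree-split-odd-level} follows directly from the stopping rule, since extending-leaves lie at the even depth $d$ and have out-degree exactly $1$ in $T$ (being chain vertices with $T$-children outside $T_i$).

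The main obstacle is the size bound \ref{itm:tree-split-size}, $|T_i| \le 6 m^{c\alpha}$, which I would establish using the following key observation: since $T$ has at most $m^\alpha$ leaves, the BFS frontier size $f_d$ is bounded by $m^\alpha$ for every $d$, because distinct depth-$d$ vertices in $T(r)$ have distinct leaf descendants of $T$. Consequently the first even depth $d_0$ at which $|T_i| \ge k$ satisfies $d_0 \le 2 m^{c\alpha-\alpha}$, and stopping within the window $[d_0, d_0 + 5m^{c\alpha - \alpha}]$ keeps $|T_i| \le 6m^{c\alpha}$. Since $T$ has at most $m^\alpha - 1$ branching vertices by \Cref{lem:branching}, at most $m^\alpha$ of the even depths in this window have a branching vertex on the frontier; the hypothesis $\alpha \le 1/(2c)$ with $c \ge 2$ forces $m^\alpha \ll m^{c\alpha - \alpha}$, so at least one even $d$ in the window has a purely chain/leaf frontier, and we pick that $d$.

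For the count bound \ref{itm:tree-split-number}, each non-terminal subtree has size $\ge k$, so there are at most $m/k = m^{1-c\alpha}$ of them. Each terminal subtree is rooted at a distinct vertex and contains at least one leaf of $T$, and distinct terminal subtrees have pairwise disjoint leaf sets, so their number is at most the number of leaves of $T$, namely at most $m^\alpha \le m^{1-c\alpha}$ (using $\alpha \le 1/(c+1)$, a weaker consequence of the hypothesis with $c \ge 2$). Summing gives $\ell \le 2m^{1-c\alpha}$, as required. I expect the trickiest bookkeeping to be the justification that the ``good'' stopping depth exists within the size window, which is ultimately where the specific constants in the hypothesis $\alpha \le (2c)^{-1}$ are consumed.
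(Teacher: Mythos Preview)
Your approach is correct in spirit and genuinely different from the paper's. The paper builds each $T_i$ in two stages: first it grows $T_i'$ by repeatedly choosing \emph{any} extending-leaf and adding all its children, stopping once $|T_i'|\ge m^{c\alpha}$ (so $|T_i'|\le 2m^{c\alpha}$, using only that the maximum out-degree is at most $m^\alpha$); then it repairs ``bad'' extending-leaves (those at odd depth or with out-degree $\neq 1$) by pushing one step further, and argues directly that every vertex added in this second stage is a leaf of $T$, a branching vertex, or the parent of one of these, giving at most $4m^\alpha$ extra vertices. Your construction instead keeps $T_i$ level-complete via BFS and uses a pigeonhole argument over a window of depths to locate an even depth whose frontier contains no branching vertex. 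Both routes ultimately rest on the same two facts ($\le m^\alpha$ leaves forces $\le m^\alpha$ vertices per level and $\le m^\alpha$ branching vertices total), but the paper's local repair avoids the window bookkeeping, while your version yields subtrees of a particularly clean shape.

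One minor arithmetic point: with your window $[d_0,d_0+5m^{c\alpha-\alpha}]$, the size bound you actually get is $|T_i|\le m^{c\alpha}+(5m^{c\alpha-\alpha}+2)m^\alpha = 6m^{c\alpha}+2m^\alpha$, not $6m^{c\alpha}$; shrinking the window slightly (e.g.\ to length $4m^{c\alpha-\alpha}$) and absorbing the $+2m^\alpha$ term using $c\ge 2$ fixes this. Also, the sentence ``$d_0\le 2m^{c\alpha-\alpha}$'' is neither needed nor obviously true as written; you only need the size bound within the window, which follows from $|T_i|<m^{c\alpha}$ at depth $d_0-2$ together with the per-level bound. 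Your count argument for~\ref{itm:tree-split-number} matches the paper's exactly.
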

           
           \begin{figure}[ht]
                \caption{A tree-split of a tree.}
                \includegraphics{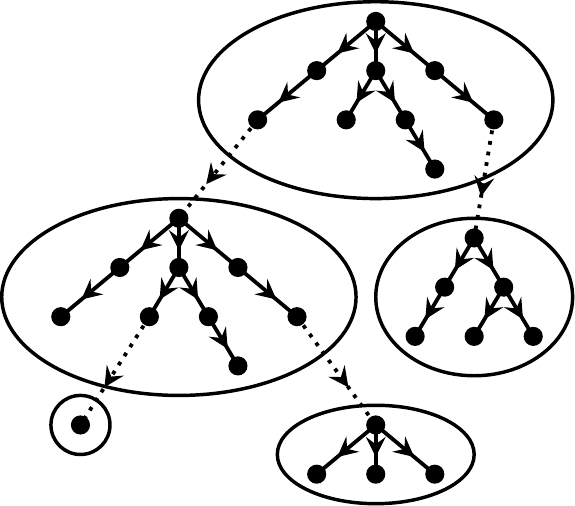}
                \label{fig:tree-split}
            \end{figure}

            Given a partition of $T$ into subtrees $T_1, \ldots, T_\ell$ as in Lemma \ref{lem:tree-split}, if we contract each subtree $T_i$ to a single vertex, the resulting graph $T'$ is again an out-directed tree with no multiple edges. We call this graph a \emph{(c,$\alpha$)-tree-split} of $T$; note that this split need not be unique. A subtree $T_i$ in such a split that does not have extending-leaves, i.e.\ the vertex corresponding to $T_i$ in $T'$ is a leaf, is called a \emph{leaf-tree}.

            \begin{proof}[ of Lemma \ref{lem:tree-split}]
                For each $i \le 2 m^{1-c\alpha}$ we construct a subtree $T_i$ in two stages. At step $i$ we assume we have already found $T_1,\ldots, T_{i-1}$ for which the conditions (\ref{itm:tree-split-one-in-edge})-(\ref{itm:tree-split-size}) hold, and $V(T_1) \cup \ldots \cup V(T_{i-1})$ induces a subtree $T'$ of $T$ with the same root. In the first stage we choose the root of $T_i$ and ensure that $T_i$ is big enough (or a leaf-tree), so that we are later able to deduce (\ref{itm:tree-split-number}), and in the second stage we extend $T_i$ further to ensure that it satisfies (\ref{itm:tree-split-one-in-edge})-(\ref{itm:tree-split-size}). We stop the process when all the vertices of $T$ are covered by the subtrees $T_1,\ldots, T_i$, and for such $i$ we denote $\ell:=  i$.

                \paragraph{Stage 1.} 
                    First we choose the root $v$ of $T_i$. For $i=1$ we take the root of $T$ and for $i> 1$ we pick the only out-neighbour of an extending-leaf (there is only one out-neighbour by (\ref{itm:tree-split-odd-level})) of $T'$ (this is the subtree of $T$ induced by $V(T_1) \cup \ldots \cup V(T_{i-1})$). 

                    Assume first that $v$ has at most $m^{c\alpha}$ descendants in $T$. Then we let $T_i$ be the subtree consisting of all descendants of $v$ in $T$. In this case $T_i$ is a leaf-tree of order at most $m^{c\alpha}+1$ and there is no second stage.

                    Otherwise, we start with a subtree $T_i'$ consisting only of the vertex $v$. As long as $|T_i'|<m^{c\alpha}$ we pick an extending-leaf of $T_i'$ and add all its children to $T_i'$. Note that such an extending-leaf always exists because there are more than $m^{c\alpha}$ descendants of $v$ in $T$ and each non-leaf vertex of $T_i'$ has all its children from $T$ in $T_i'$. Since the maximum out-degree of $T$ is bounded from above by the number of out-leaves of $T$ (every out-neighbour eventually leads to a different out-leaf by following out-edges), in each step of the construction of $T_i'$ we add at most $m^{\alpha}$ vertices. This implies that when we stop (i.e.\ right after $|T_i'| \ge m^{c\alpha}$ holds), we have the following.
                        $$ m^{c\alpha} \le |T_i'| \le  m^{c\alpha}+ m^\alpha \le 2m^{c\alpha}.$$

                \paragraph{Stage 2.} 
                    We start with $T_i=T_i',$ produced by stage 1. Call an extending-leaf contained in $T_i$ \emph{bad} if it lies in an odd level or has out-degree not equal to one in $T$. As long as there is a bad extending-leaf in $T_i,$ we add all its children to $T_i$. Eventually there are no bad extending-leaves left, since by going deep enough we reach a leaf of $T$ which by definition is not an extending-leaf.

                Note that during the procedure of both stages (\ref{itm:tree-split-one-in-edge}) is always satisfied by the choice of the root. Moreover, at the end of the procedure, (\ref{itm:tree-split-extend-leaf}) and (\ref{itm:tree-split-odd-level}) hold as well, since there are no bad extending-leaves in $T_i$.

                Now let us prove that condition (\ref{itm:tree-split-size}) holds, i.e.\ that $|T_i|\le 6 m^{c\alpha}$. From the first stage we know that $| T_i'|\le 2m^{c\alpha}$. Furthermore, every vertex in $T_i\setminus T_i'$ is either a leaf in $T$ or it was a bad vertex for some $T_i'$. In the latter case such a vertex is either branching or has out-degree $1$ in $T$ and is on an odd level, in the second case its child is either a branching vertex or a leaf of $T$ itself. Recall that there are at most $m^\alpha$ leaves in $T$ so by Lemma \ref{lem:branching}, there are at most $m^{\alpha}$ branching vertices. Finally, as each vertex has a unique parent the number of vertices of $T_i \setminus T_i'$ of the last type (i.e.\ vertices on odd levels of $T$ whose out-degree is $1$) is bounded by the number of leaves of $T$ plus the number of branching vertices of $T$. This implies that
                    $$ |T_i| = |T_i'| + |T_i\setminus T_i'| \le 2 m^{c\alpha} + m^{\alpha} + m^{\alpha} + 2m^\alpha \le 6 m^{c\alpha}.$$
          
                To see that the last condition (\ref{itm:tree-split-number}) holds, we note that each leaf-tree contains at least one out-leaf of $T$. Thus, the number of leaf-trees is bounded by $m^\alpha$. In addition to that we can bound the number of non-leaf-trees by $m/m^{c\alpha}$, since each one has order at least $m^{c\alpha}$. This implies that 
                        $$ \ell \le m^\alpha + m^{1-c\alpha} \le 2m^{1-c\alpha},$$
                where the last inequality follows from $c\ge 2$ and $0<\alpha \le (2c)^{-1}$.
            \end{proof}

        \subsubsection*{The $\alpha$-path-split}

            In the following lemma we are interested in a similar split, but this time we want the subtrees in the split to be paths. The graph obtained by contracting the paths in the following lemma will be called an $\alpha$-\emph{path-split}. We call a vertex of a tree a \emph{junction} if it is a leaf or a branching vertex.
            
            \begin{figure}[ht]
                \caption{The path-split of a tree.}
                \includegraphics{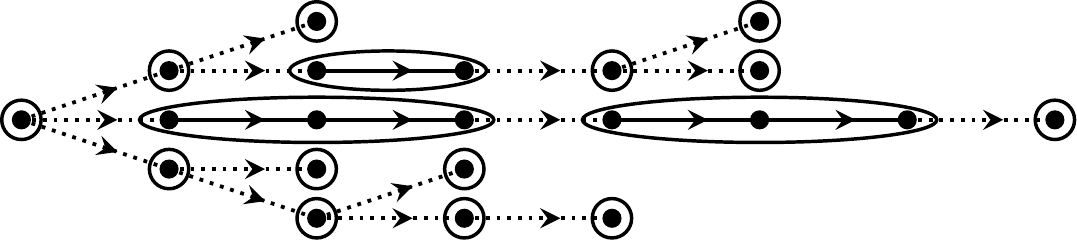}
                \label{fig:path-split}
            \end{figure}

            \begin{lem} \label{lem:path-split}
                Let $0<\alpha \le \frac{1}{4}$. Let $T$ be an out-directed tree on $m$ vertices with at most $m^\alpha$ leaves. Then there is a partition of the vertices into subpaths $P_1,\dots,P_\ell$ such that the following properties hold.
                \begin{enumerate}[(i)]
                    \item \label{itm:path-split-junction}
                        If $P_i$ contains a junction then $|P_i|=1$,
                    \item \label{itm:path-split-in-out-edge}
                        for every $i\in[\ell]$ there is at most one in-edge towards $P_i$, which is directed towards the start-vertex of $P_i$. Furthermore, unless $P_i$ is a junction, there is at most one out-edge away from $P_i$ which is directed from the end-vertex of $P_i$,
                    \item \label{itm:path-split-size}
                        $|P_i| \le m^{3\alpha}$ and
                    \item \label{itm:path-split-num}
                        $ \ell \le 5 m^{1-3\alpha}$.
                \end{enumerate}
            \end{lem}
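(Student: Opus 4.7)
The plan is to partition $V(T)$ into two kinds of parts: singletons corresponding to junctions (as forced by (i)), and pieces of bounded length carved out of the maximal directed paths that remain once the junctions are deleted. The structural conditions (i)--(iii) will be essentially immediate from the construction, while (iv) follows from a counting argument based on Lemma \ref{lem:branching} together with the hypothesis $\lf(T) \le m^\alpha$.

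First I would let $J$ denote the set of junctions of $T$ and take each $v \in J$ as its own singleton part. By Lemma \ref{lem:branching} and the assumption on the number of leaves, $|J| \le \lf(T) + (\lf(T) - 1) \le 2 m^\alpha$. A key observation is that the root of $T$ always belongs to $J$: since its in-degree is $0$, either its out-degree is at most $1$, in which case its underlying degree is at most $1$ and it is a leaf, or its out-degree is at least $2$ and it is a branching vertex. Consequently every non-junction vertex of $T$ has both in-degree and out-degree equal to $1$, so the subgraph induced by $V(T)\setminus J$ is a disjoint union of directed paths $Q_1, \dots, Q_k$; each $Q_j$ is maximal in the sense that the in-edge of its first vertex comes from a junction and the out-edge of its last vertex goes to a junction. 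Since every vertex of $T$ has at most one in-edge, the assignment sending $Q_j$ to the out-neighbour of its last vertex is an injection into $J$, giving $k \le |J| \le 2 m^\alpha$.

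Finally, I would chop each $Q_j$ into consecutive subpaths of length at most $m^{3\alpha}$; these pieces form the remaining parts of the partition. Condition (iii) is immediate and (i) holds by construction. For a piece $P_i$ inside some $Q_j$, the unique in-edge of its start-vertex in $T$ comes from outside $P_i$ (from a junction or from the preceding piece of $Q_j$) and so is the only in-edge into $P_i$, directed towards the start-vertex; similarly, the end-vertex of $P_i$ is non-branching, so its unique out-edge leaves $P_i$ from the end-vertex. This gives (ii). For (iv), each $Q_j$ contributes at most $\lceil |Q_j|/m^{3\alpha}\rceil \le |Q_j|/m^{3\alpha} + 1$ pieces, hence
\[
\ell \le |J| + \sum_{j=1}^k \left( \frac{|Q_j|}{m^{3\alpha}} + 1 \right) \le 2 m^\alpha + \frac{m}{m^{3\alpha}} + 2 m^\alpha \le 5 m^{1-3\alpha},
\]
using $\alpha \le 1/4$, and hence $m^\alpha \le m^{1-3\alpha}$, in the last step. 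The only point that requires a bit of care is the observation that the root of $T$ always lies in $J$, since this is what makes $T \setminus J$ a disjoint union of directed paths and enables the injective bound on $k$; once that is in hand, the rest is bookkeeping.
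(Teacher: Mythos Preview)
Your approach is essentially identical to the paper's: isolate the junctions as singletons, observe that what remains is a disjoint union of directed paths, chop those into pieces of length at most $m^{3\alpha}$, and count. Your injection argument bounding $k$ by $|J|$ is a nice alternative to the paper's forest argument, but the two are equivalent in spirit.

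There is, however, a small gap in the step you flagged as ``the only point that requires a bit of care''. You claim that if the root has out-degree at least $2$ then it is a branching vertex. But branching means underlying degree at least $3$, so a root with in-degree $0$ and out-degree exactly $2$ has underlying degree $2$ and is neither a leaf nor a branching vertex --- it is \emph{not} a junction. In that case your assertion that every non-junction vertex has in-degree and out-degree both equal to $1$ fails at the root, and the component of $T\setminus J$ containing the root is an undirected path that is not a directed path (two directed paths emanating from the root, glued at the root).

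The fix is immediate: simply add the root to $J$ regardless of its degree. This costs you at most one extra singleton and at most one extra long path, so $|J|\le 2m^\alpha+1$; your injection still gives $k\le|J|-1\le 2m^\alpha$ (the root has in-degree $0$ and is never hit), and the final count goes through with at worst $5m^{1-3\alpha}+1$ in place of $5m^{1-3\alpha}$, which is harmless for the application. (As it happens, the paper's own proof glosses over the same edge case.)
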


            \begin{proof}
                We define the paths in the split as follows. First we let each junction be a separate trivial path of size $1.$ We then remove all junctions from the tree, thus we are left with a collection of disjoint subpaths, which we call \textit{long subpaths}. Finally, we split each such path into as few smaller subpaths, called \textit{short subpaths}, such that each has order at most $m^{3\alpha}$. We let these shorter paths be the remaining subpaths of our split. We now show that this split satisfies the desired conditions.
    
                First note that the number of junctions is at most $2m^\alpha$, since by assumption there are at most $m^\alpha$ leaves and therefore at most $m^\alpha$ branching vertices, by Lemma \ref{lem:branching}. If we consider a graph whose vertex set is the set of junctions and put an edge between a pair of junctions whenever they are joined by a long subpath, we obtain a forest. Therefore, the number of long subpaths, denoted by $d$, satisfies the following.
                    $$ d \le \text{ \# of junctions } - 1 \le 2 m^\alpha.$$
                For $i \in [d]$, denote by $m_i$ the order of the $i$-th long subpath. Then we split the $i$-th long subpath into $r_i := \ceil{\frac{m_i}{m^{3\alpha}}} \le \frac{m_i}{m^{3\alpha}} + 1$ shorter subpaths of order at most $m^{3\alpha}$ each. Hence, the total number of paths used is bounded from above by
                $$
                    \sum_{i \in [d]} r_i + 2m^{\alpha} \le \sum_{i \in d}\frac{m_i}{m^{3\alpha}} + d +2m^{\alpha} \le m^{1 - 3\alpha} + 4 m^{\alpha} \le 5m^{1 - 3\alpha},
                $$
                as required.
            \end{proof}
        \subsubsection*{The $k$-core-split}
    
            We now define the \emph{$k$-core-split} $F_1,\dots, F_\ell$ of a tree $T$; this will be used in Subsection \ref{ssec:general-tree-vs-path} to remove the requirement on the number of leaves of $T$ in order to prove Theorem \ref{thm:tree-vs-path}. Let $T'$ be the $k$-core of $T$. We set $F_1 = T'$. For $i>1,$ let $S_1, S_2, \dots, S_{\ell_i}$ be the trees of the forest $T\setminus \bigcup_{j\in [i-1]} F_j$. For every $j\in[\ell_i]$ we define $T_j$ to be the $k$-core of $S_j$ and set the forest $F_i :=  \bigcup_{j\in[\ell_i]} T_j$.
            
            \begin{figure}[ht]
                \caption{The $3$-core-split of a tree on $19$ vertices.}
                \includegraphics{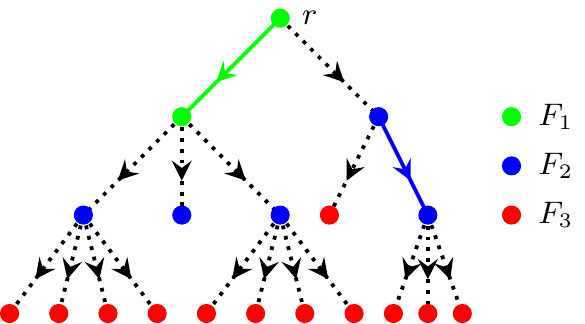}
                \label{fig:3-core-split}
            \end{figure}
            
            \begin{prop} \label{prop:core}
                Let $F_1,\dots, F_\ell$ be the $k$-core-split of a tree $T$. If $T'$ is a tree in the forest $F_i$ then
                \begin{enumerate}[(i)]
                    \item \label{itm:split-1}
                        $T'$ has order less than $\frac{|T|}{k^{i-1}}$ and               
                    \item \label{itm:split-2}
                        $\lf(T') \le k$.
                \end{enumerate}
            \end{prop}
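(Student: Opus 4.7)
The plan is to prove both parts by induction on $i$, with the bulk of the work in part (i). Part (ii) is essentially immediate: by construction, every tree in $F_i$ is the $k$-core of some rooted tree, either $T$ itself (when $i=1$) or a component of $T\setminus\bigcup_{j<i}F_j$ (when $i\ge 2$), and hence has at most $k$ leaves by \Cref{obs:core}.

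For part (i), the cleanest setup is to track the component sizes of the auxiliary forest $\mathcal{T}_i$ consisting of the connected components of $T\setminus\bigcup_{j<i}F_j$, rather than the sizes of the trees in $F_i$ directly. Then $\mathcal{T}_1=\{T\}$, and by definition $F_i$ is the union of the $k$-cores of the trees in $\mathcal{T}_i$. I would prove by induction that every $S\in\mathcal{T}_i$ satisfies $|S|\le |T|/k^{i-1}$. The base case is trivial. For the inductive step, each $S\in\mathcal{T}_{i+1}$ is a component of $S'\setminus V(\text{$k$-core of }S')$ for a unique $S'\in\mathcal{T}_i$; this uses the fact that the $k$-cores of different trees in $\mathcal{T}_i$ are vertex-disjoint, so removing $F_i$ from the tree $S'$ reduces to removing its own $k$-core. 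Then \Cref{obs:core} gives $|S|\le |S'|/k$, and combined with the inductive hypothesis this yields $|S|\le |T|/k^i$.

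The conclusion for part (i) then follows immediately, because any $T'\in F_i$ is the $k$-core of some $S\in\mathcal{T}_i$, so $|T'|\le |S|\le |T|/k^{i-1}$. None of this is technically deep; the only point of care is the bookkeeping step that identifies each tree in $\mathcal{T}_{i+1}$ as a piece of $S'\setminus V(\text{$k$-core of }S')$ for a specific $S'\in\mathcal{T}_i$, which in turn relies on the vertex-disjointness of the $k$-cores of different trees in the same forest. Strict inequality (as opposed to $\le$) would require noting that at each level the $k$-core is a proper subtree whenever the parent has more than $k$ leaves, but the $\le$ bound is all that the subsequent applications in \Cref{ssec:general-tree-vs-path} actually use.
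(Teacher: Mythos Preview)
Your proposal is correct and follows essentially the same approach as the paper: induction on $i$ using \Cref{obs:core}, with part~(\ref{itm:split-2}) immediate from the construction. Your explicit tracking of the auxiliary forest $\mathcal{T}_i$ (the components of $T\setminus\bigcup_{j<i}F_j$) is in fact a cleaner way to set up the induction than the paper's terser version, and your remark about strict inequality versus $\le$ is also on point.
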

            
            \begin{proof}
                Let us prove (\ref{itm:split-1}) by induction over $i$. The statement is clearly true for $i=1$. Let us assume that it holds for some $i < l$, then every tree $S$ from $F_i$ has size less than $\frac{|T|}{k^{i-1}}$ and is a $k$-core of some subtree of $T$. Therefore, every tree in $F_{i+1}$ has order at most  $\frac{|T|}{k^{i-1}} \cdot k^{-1} = \frac{|T|}{k^{i}}$, using Observation \ref{obs:core}, as desired. Property (\ref{itm:split-2}) follows, because every tree in $F_i$ is a $k$-core of some subtree of $T$.
            \end{proof}
    
    \subsection{Tree with few leaves}\label{ssec:few-leaves}
    
    We will make use of the following theorem (Theorem 3.17 in \cite{complete-directed-ramsey}). In fact, we will only use a special case of this theorem, when one of the trees is a path.

    \begin{thm} \label{thm:complete-ramsey}
        There exists a constant $c$ such that for any oriented trees $T_1$ and $T_2,$ in any 2-colouring of the complete directed graph on $c(|T_1|+|T_2|)$ vertices there exists a red $T_1$ or a blue $T_2$.
    \end{thm}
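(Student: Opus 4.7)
My plan is to reduce the general tree-vs-tree statement to the directed-path-vs-directed-path case, which has linear directed Ramsey number by Gy\'arf\'as--Lehel and Raynaud, using induction on the total number of leaves of $T_1$ and $T_2$. The base case, in which both trees are directed paths, is then precisely what these classical results supply, with the correct linear bound.

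For the inductive step, assume (WLOG) that $T_1$ has a branching vertex. Peel off a maximal leaf-ending subpath $P \subseteq T_1$, meaning a path from a leaf of $T_1$ up to (but not including) the first branching vertex; let $T_1' := T_1 \setminus V(P)$. Then $T_1'$ has strictly fewer leaves than $T_1$, since the branching vertex where $P$ attached still has degree at least two in $T_1'$. Apply the inductive hypothesis to $(T_1', T_2)$ inside $\overleftrightarrow K_N$ to obtain either a blue copy of $T_2$ (and be done) or a red copy $\phi(T_1')$ of $T_1'$. One then attempts to extend $\phi$ to a red copy of all of $T_1$ by attaching a red copy of $P$ (respecting the orientations of $P$'s edges) at $\phi(v)$, where $v \in V(T_1')$ is the branching vertex at which $P$ was attached. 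To guarantee that this attachment succeeds, I would strengthen the inductive hypothesis so that the red copy of $T_1'$ comes together with a large \emph{reservoir} of vertices disjoint from $\phi(V(T_1'))$ in which many red oriented paths of length $|P|$ of the same shape as $P$ emanate from $\phi(v)$; if no such red path exists, the reservoir must be dense in blue, from which a blue copy of $T_2$ can be extracted via the inductive hypothesis applied to a smaller instance.

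The main obstacle is to set up and maintain this strengthened inductive statement so that the reservoir is preserved across all $O(|T_1|+|T_2|)$ peeling steps without blowing up $N$ to super-linear. Each peel consumes some reservoir capacity, and one must symmetrically be willing to peel leaves off $T_2$ (with a blue reservoir) when that is the efficient move, so the bookkeeping of the two reservoirs has to be tight. A further subtlety is that the attaching edge of $P$ may be directed either way, so the reservoir must separately support both red out-paths and red in-paths from $\phi(v)$, and likewise for blue; any surplus loss at a typical inductive step would push the total bound to super-linear in $|T_1|+|T_2|$. I expect these accounting issues---rather than any single conceptual insight---to be the main technical hurdle, and I would try to control them by ensuring that at each step either the reservoir shrinks by only a constant factor times $|P|$, or we immediately find a blue $T_2$ using the density that caused the shrinkage.
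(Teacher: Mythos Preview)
The paper does not prove this theorem; it is quoted from \cite{complete-directed-ramsey} (Theorem~3.17 there) and used as a black box. So there is no in-paper proof to compare against directly.

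As a standalone proposal, what you have written is a plan with openly acknowledged gaps, not a proof. The crux is the strengthened inductive hypothesis: you say you ``would strengthen the inductive hypothesis so that the red copy of $T_1'$ comes together with a large reservoir,'' but you never state what this hypothesis is, and the entire argument hinges on it. Concretely, after you embed $T_1'$ and fail to attach $P$ at $\phi(v)$, all you learn is that a certain neighbourhood of the \emph{single} vertex $\phi(v)$ is blue-heavy; this local information does not produce a blue $T_2$ without further structure, and it is exactly that structure that the unstated hypothesis must supply. You correctly identify that peels occur on both sides and with both edge-orientations, so the reservoir must simultaneously support four kinds of extensions while staying linear in $|T_1|+|T_2|$; this is the whole difficulty, and it is not done here.

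There is also a genuine mismatch in your base case. Your induction terminates when both trees have two leaves, i.e.\ when both are \emph{oriented} paths (edges may point either way along the path). The Gy\'arf\'as--Lehel/Raynaud/Williamson result you invoke is for \emph{directed} paths $\overrightarrow{P_n}$ only, so your base case is not covered by that citation; handling arbitrary oriented paths already needs a separate argument.

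For comparison, the techniques this paper uses to prove the analogous statements in the pseudorandom setting (Theorems~\ref{thm:in-vs-out-bounded} and~\ref{thm:in-vs-out}) sidestep the reservoir bookkeeping entirely, and are likely close to what \cite{complete-directed-ramsey} does. For a tree with at most $k$ leaves one removes the disjoint-paths layer $\dpl(T)$ repeatedly, halving the leaf count each time, so only $O(\log k)$ layers arise; one then \emph{pre-builds} a nested sequence $U_0\supseteq U_1\supseteq\dots\supseteq U_h$ where each $U_i$ consists of vertices with large red out-degree into $U_{i-1}$, embeds the innermost layer in $U_h$, and attaches the paths outward. The sets $U_i$ are fixed before any embedding, and their sizes are controlled globally by a density argument (if $U_{i-1}\setminus U_i$ were large it would be blue-dense and already contain a blue $S$), so there is no per-step accounting. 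The bound on the number of leaves is then removed via $k$-cores and a double induction in which both trees shrink by a factor $k$ while the host shrinks only by $\sqrt{k}$. These devices replace your reservoir idea and resolve precisely the issues you flag as obstacles.
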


    Furthermore, our proof makes use of the special case of Theorem \ref{thm:tree-vs-path} for path vs.\ path proved in \cite{path-vs-path} as Theorem 12. 

    \begin{thm} \label{thm:path-vs-path}
        Given $0< \eps < \frac{1}{2}$ and $\sigma > 0$ there is a constant $c > 0$ such that the following holds. Let $G$ be an $(\eps, \sigma \log N)$-pseudorandom tournament on $N$ vertices. Then $G \to \left(\dirpath{n}, \dirpath{m}\right)$ provided $n,m \le N/ c$ and $nm \le \frac{N^2}{c ^2 \log N}$.
    \end{thm}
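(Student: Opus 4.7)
The plan is to assume, for contradiction, that $G$ admits a $2$-coloring with neither a blue $\dirpath{n}$ nor a red $\dirpath{m}$, and to derive a contradiction via a two-level argument: first produce short monochromatic paths in many small blocks using the Gallai--Hasse--Roy--Vitaver theorem, then chain these into a single long monochromatic path using pseudorandomness together with \Cref{thm:complete-ramsey}.

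First I would partition $V(G)$ into $t = \Theta(N/\log N)$ blocks $B_1,\dots,B_t$ of size $s = \sigma\log N$. By applying GHRV to each color class inside a block, we have $\chi(\text{blue}|_{B_j})\cdot\chi(\text{red}|_{B_j})\ge s$, so the longest monochromatic directed path inside $G[B_j]$ has at least $\sqrt{s}=\sqrt{\sigma\log N}$ vertices. Label $B_j$ by the color of such a path; by pigeonhole at least $t/2$ blocks receive the same label, say blue, so each carries a blue path $Q_j$ on $\sqrt{\sigma\log N}$ vertices.

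Next I would build an auxiliary $2$-edge-colored complete directed graph $H$ on the blue-labeled blocks. Since each block has size $s\ge\sigma\log N$, pseudorandomness gives $e_G(B_j,B_k)\ge \eps s^2$ in each direction, so we may meaningfully color each directed edge $B_jB_k$ of $H$ by the dominant color among the inter-block $G$-edges that would serve to splice $Q_j$ to $Q_k$: blue if at least an $\eps/2$ fraction of the relevant inter-block $G$-edges are blue, red otherwise. Applying \Cref{thm:complete-ramsey} to $H$, which has $\Omega(N/\log N)$ vertices, yields a monochromatic directed path of length $L=\Omega(N/\log N)$ in $H$.

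Finally I would lift this monochromatic path in $H$ to a monochromatic path in $G$ by concatenating the in-block mini-paths $Q_{j_1},\dots,Q_{j_L}$ via the promised inter-block splicing edges (and doing the analogous construction for red if the $H$-path is red), producing a monochromatic path of length roughly $L\sqrt{\sigma\log N}=\Omega(N/\sqrt{\log N})$ in $G$. Since the hypothesis $nm\le N^2/(c^2\log N)$ together with $n,m\le N/c$ forces $\min(n,m)\le N/(c\sqrt{\log N})$, such a path beats $n$ or $m$ in the corresponding color and contradicts our assumption.

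The main obstacle is the splicing step: the in-block paths $Q_j$ have rigid endpoints, whereas we need a monochromatic $G$-edge that connects the \emph{specific} end-vertex of $Q_j$ to the \emph{specific} start-vertex of $Q_k$. To handle this I would first prove an endpoint-flexibility lemma inside each block---iterating GHRV or using Moon-type pancyclicity within the monochromatic structure---to produce many in-block mono paths whose start and end vertices together cover a constant fraction of $B_j$. Then pseudorandomness, which provides $\Omega(s^2)$ inter-block edges, ensures that a constant fraction of them both match flexibly-chosen endpoints and have the correct color, making the splicing work with only $O(1)$-vertex loss per splice. Making this endpoint control rigorous while simultaneously preserving the $\sqrt{\log N}$ length of the spliced in-block paths is the technical heart of the argument.
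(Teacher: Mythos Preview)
The paper does not actually prove Theorem~\ref{thm:path-vs-path}; it is imported from \cite{path-vs-path} and used as a black box (the text immediately before the statement says ``our proof makes use of the special case \ldots\ proved in \cite{path-vs-path} as Theorem~12'', and shortly after: ``This result is the main bottleneck for our proof, as it will rely on this result as a black box''). There is therefore no in-paper proof to compare your proposal against; I will simply assess the sketch on its own merits.

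Your argument has a genuine structural gap beyond the splicing technicality you flagged: it cannot reach the asymmetric statement. After pigeonhole you fix the majority block-label, say blue, and all your in-block mini-paths $Q_j$ are blue. If the monochromatic path returned by Theorem~\ref{thm:complete-ramsey} in $H$ is blue, your splicing (granting it works) yields a blue path of length $\Theta(N/\sqrt{\log N})$ in $G$. If the $H$-path is red, there are no red mini-paths to splice --- the parenthetical ``analogous construction for red'' is vacuous --- and you get at best a red path of length $\Theta(N/\log N)$ by hopping through one vertex per block. Crucially, you have no control over which color the long path lands in. Now take $n = N/c$ and $m = N/(c\log N)$, which satisfies all the hypotheses: you must produce a blue $\dirpath{N/c}$ or a red $\dirpath{N/(c\log N)}$, yet your method caps the blue path at $\Theta(N/\sqrt{\log N}) \ll N/c$ and does not reliably produce any red path at all. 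Your closing remark that $\min(n,m) \le N/(c\sqrt{\log N})$ is correct but irrelevant, since the path you build need not be in the color corresponding to $\min(n,m)$. The actual proof in \cite{path-vs-path} has to let the relevant scales depend on both $n$ and $m$ and to run a genuinely two-sided argument; a one-shot block decomposition at the symmetric scale $\sqrt{\log N}$ cannot deliver this.
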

    This theorem is tight up to the constant factor. This result is the main bottleneck for our proof, as it will rely on this result as a black box.
    
    The following theorem is an intermediate result, leading to the proof of the Theorem \ref{thm:tree-vs-path}. Its proof introduces some interesting new ideas about how to deal with trees in place of paths.

    \begin{thm} \label{thm:bounded-tree-vs-path}
        Given $0<\eps< \frac{1}{2}$ and $\sigma > 0$, there exists a constant $c > 0$ such that the following holds. Let $G$ be a tournament on $N$ vertices, which is $(\eps, \sigma \log N)$-pseudorandom. Then $ G \rightarrow (\dirpath{n}, T)$, where $T$ is any directed tree on $m$ vertices, with at most $m^{1/6}$ leaves, as long as $n$, $m \le N/c$ and $nm \le \frac{N^2}{c^2\log N}$.
    \end{thm}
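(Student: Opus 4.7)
Without loss of generality assume $T$ is out-directed. Fix thresholds $a \ll b$ so that a blue cycle in $G$ is \emph{short} if it has length at most $am^{1/3}$, \emph{long} if at least $bm^{1/3}$, and \emph{medium} otherwise. A standard disjointness argument gives a dichotomy: either (Case 1) there are many, say $t$, pairwise vertex-disjoint medium or long blue cycles, or (Case 2) there is a set $U \subseteq V(G)$ with $|U| = \Omega(N)$ spanning no medium or long blue cycle. In each case the plan is to produce either a blue $\dirpath{n}$ directly, or a controlled red substructure into which $T$ is embedded via the tree-splits from \Cref{lem:tree-split} or \Cref{lem:path-split}; the leaf bound $\lf(T) \le m^{1/6}$ is exactly what these lemmas need in order to guarantee few, small pieces.

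In Case 1 we aim for a \emph{red-blue pairs} configuration: pairwise disjoint sets $A_1, B_1, \ldots, A_t, B_t$ of size $\Theta(m^{1/3})$, vertex-disjoint blue paths $P_i \supseteq A_i$, and only red edges from $A_i$ to $B_i$. If many disjoint medium blue cycles exist, form an auxiliary $2$-coloured complete digraph on them with $C_i C_j$ blue iff a constant fraction of $V(C_i)$ has a blue out-neighbour in $V(C_j)$; a blue directed path lifts through the medium cycles to a blue $\dirpath{n}$, while a red matching supplies the red-blue pairs by taking $A_i \subseteq V(C_i)$ to be the vertices with no blue out-neighbour in $V(C_j)$ and $B_j$ their red out-neighbourhood in $V(C_j)$. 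If instead we have many disjoint long cycles and no medium cycles, then the blue chord structure of a long cycle $C$ is severely constrained (a blue chord $u \to v$ together with the arc from $v$ back to $u$ produces another blue cycle whose length, being forced to be short or long, excludes chords spanning most intermediate arc lengths), which lets us slice each long cycle into consecutive arcs $A_i, B_i$ with only red edges from $A_i$ to $B_i$. Given a red-blue pairs structure, build a second auxiliary $2$-coloured complete digraph $H$ on $[t]$ with $ij$ blue iff many blue edges go from $A_i$ to $A_j$, and apply \Cref{thm:complete-ramsey} to obtain either a blue $\dirpath{n/m^{1/3}}$ or a red copy of a $(c, 1/6)$-tree-split of $T$. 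The blue path lifts via the $P_i$ to a blue $\dirpath{n}$; the red tree-split is inflated to a red copy of $T$ by embedding each subtree of the split into a red bipartite graph $G[A_i, B_i]$ using pseudorandomness, and connecting pieces along the red tree-split edges.

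In Case 2, extract pairwise disjoint sets $U_1, \ldots, U_\ell \subseteq U$ of size $\Theta(\sigma \log N)$ such that for $i < j$ the bipartite graph from $U_i$ to $U_j$ has very few blue edges; this layered ordering is forced by the absence of medium and long blue cycles together with pseudorandomness. Apply \Cref{thm:path-vs-path} inside each $U_i$ and declare it \emph{blue} or \emph{red} according to whether many pairs of vertices of $U_i$ are joined by a long blue directed path inside $U_i$ or by a long red one. If most $U_i$ are red, take an $\alpha$-path-split of $T$ from \Cref{lem:path-split} with $\alpha = 1/6$ and embed the resulting subpaths one per red $U_i$ in order, using the flexibility in choosing endpoints of the long red paths inside each $U_i$ and the red dominance of forward edges between the $U_i$'s to concatenate them into a red $T$. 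If most $U_i$ are blue, form an auxiliary $2$-coloured complete digraph $K$ on the blue $U_i$'s with $U_i U_j$ blue iff $i > j$ and every large subset of $U_i$ sends a blue edge to every large subset of $U_j$; a long blue directed path in $K$ lifts through the long blue paths inside each $U_i$ to a blue $\dirpath{n}$, while a large red matching in $K$ supplies large bipartite subgraphs that are almost entirely red, enabling a tree-split based embedding of $T$ exactly as in Case~1.

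The main obstacle is parameter coordination. The thresholds $a, b$, the sizes of $A_i, B_i, U_i$, the split exponent $\alpha = 1/6$, and the sizes of the trees and paths fed into \Cref{thm:complete-ramsey} and \Cref{thm:path-vs-path} must all be balanced so that in every branch the outputs fit inside $n \le N/c$ and $nm \le N^2/(c^2\log N)$; in particular the bipartite graphs $G[A_i, B_i]$ must be large enough to house every piece of a $(c, 1/6)$-tree-split of $T$, which is essentially where the exponent $1/6$ and the leaf restriction $\lf(T) \le m^{1/6}$ meet and why the hypothesis cannot be relaxed at this stage. A second delicate point is the chord-freeness argument in the long-cycles subcase: after possibly discarding a controlled number of long cycles, one must certify that no blue chord of the ``wrong'' range exists between the chosen arcs, so that $A_i \to B_i$ is genuinely all-red; this is the trickiest structural step of the proof.
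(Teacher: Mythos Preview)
Your outline matches the paper's proof essentially step for step: the short/medium/long cycle trichotomy at threshold $\Theta(m^{1/3})$, the red-blue pairs structure and its two sources (red-red matching on medium cycles, chord-freeness on long cycles), the auxiliary complete digraph plus \Cref{thm:complete-ramsey} to find either a liftable blue path or a red tree-split, and in Case~2 the forward-sparse-in-blue layering followed by \Cref{thm:path-vs-path} inside each block and the red/blue dichotomy on blocks. The architecture is right.

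There is one genuine parameter error. In Case~2 you take $|U_i| = \Theta(\sigma \log N)$; this is far too small and the argument breaks at that size. The paper takes $|U_i| = k = \Theta(N/m^{1-3\alpha}) = \Theta(N/\sqrt{m})$. The point is that inside each $U_i$ you must invoke \Cref{thm:path-vs-path} to find monochromatic directed paths of order $m^{3\alpha} = \sqrt{m}$ (in the red case, to house one piece of the $\alpha$-path-split) or of order $n/m^{1-3\alpha}$ (in the blue case, so that $t/4$ of them concatenate to length $n$). With $|U_i| \approx \log N$ neither is attainable. Conversely, $k$ cannot be much larger than $N/\sqrt{m}$ either, since you need $t = |U|/k \ge \Omega(m^{1-3\alpha})$ blocks to host all pieces of the path-split (and of the $(3,\alpha)$-tree-split in the red-matching subcase). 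So the block size is essentially forced, and your stated value would not work.

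Two smaller points worth tightening. First, the red-blue pairs require \emph{all} edges between $A_i$ and $B_i$ to be red, not just those from $A_i$ to $B_i$: the embedding of $T_i$ alternates between $A_i$ and $B_i$ and uses red edges in both directions (this is why the medium-cycle case needs a red-\emph{red} matching, i.e.\ both $ij$ and $ji$ red). Second, in the long-cycle subcase the chord argument is cleaner than you suggest: by choosing each long cycle to be a \emph{shortest} long cycle in what remains (and working inside a set already free of medium blue cycles), every blue chord of length $\ge a m^{1/3}$ would create a forbidden medium cycle or a shorter long cycle, so all such chords are red outright; no further discarding is needed.
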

\vspace{-0.2cm}
    \begin{proof}    
        Let $\alpha = 1/6$. Without loss of generality we may assume that $T$ is out-directed, because once we prove this since directed paths are both in-directed and out-directed, we can apply it to the tournament with opposite orientation of every edge to conclude the other case.
        
        Consider a fixed $2$-colouring of $G$. Write $a= 128\eps^{-2},b=8a$ and make the following definition.
        $$
        \text{A cycle } C \text{ is }
            \left\{
            \begin{array}{ll}
                \text{\textit{short}} & \text{if }\, |C| < a m^{2\alpha},\\
                \text{\textit{medium}} & \text{if }\, a m^{2\alpha} \le |C| \le b m^{2\alpha} ,\\
                \text{\textit{long}} & \text{if }\,  b m^{2\alpha} < |C|.
            \end{array}
            \right.
        $$
        We will prove the theorem under assumptions that $N \ge c^2$ and $n, m \ge N /(c\log N),$ for some value of $c$. We start by arguing how to conclude the theorem in general assuming we know it under these conditions. To see this with only the first assumption, let $n' =\max (n, N / (c\log N))$ and $m' =\max (m, N / (c\log N))$. It is easy to see that $n',m' \le N/c$ and $n'm' \le \frac{N^2}{c^2\log N}$ still hold. The required result for $m$ and $n$ now follows from the result for $n'$ and $m'$, which satisfy both assumptions. So there is a $c$ such that the general result holds, given $N \ge c^2.$ Now if $N < c^2,$ then $c^2>N \ge cn,cm$ implying $n,m<c,$ so the result follows, with a larger constant, by appealing to the fact that oriented Ramsey numbers of acyclic graphs are finite. 
        
        So, from now on we assume $N \ge c^2$ and $n, m \ge N /(c\log N).$ In particular, this implies $m^\alpha \ge \sigma \log N.$ 
        
\vspace{-0.3cm}
        \subsubsection*{Case 1. many disjoint medium or long blue cycles}
            We shall consider two subcases. 
            \begin{itemize}
                \item[(1a)]  There is a collection of vertex-disjoint medium blue cycles covering at least $N/4$ vertices,
                \item[(1b)] There is a collection of vertex-disjoint long blue cycles covering at least $N/4$ vertices, such that they span no medium blue cycle and all the large blue cycles are as short as possible.
            \end{itemize}
            
            We resolve both subcases by finding the following structure. Given $k$ and $t$, a \emph{$(k,t)$-red-blue pairs} are a collection of pairwise disjoint subsets of vertices $A_1,B_1,\dots,A_t,B_t$, each of size $k$, such that for every $i\in [t]$ the following holds.
            \begin{enumerate}[(i)]
                \item The bipartite graph $G[A_i,B_i]$ contains only red edges,
                \item For every $i\in [t]$ there exists a blue path $P_i$ that contains all vertices of $A_i$, such that $P_i$ and $P_j$ are vertex-disjoint for all $j\neq i$.
            \end{enumerate}
            
            \begin{figure}[ht]
                \caption{Red-blue pairs}
                \includegraphics{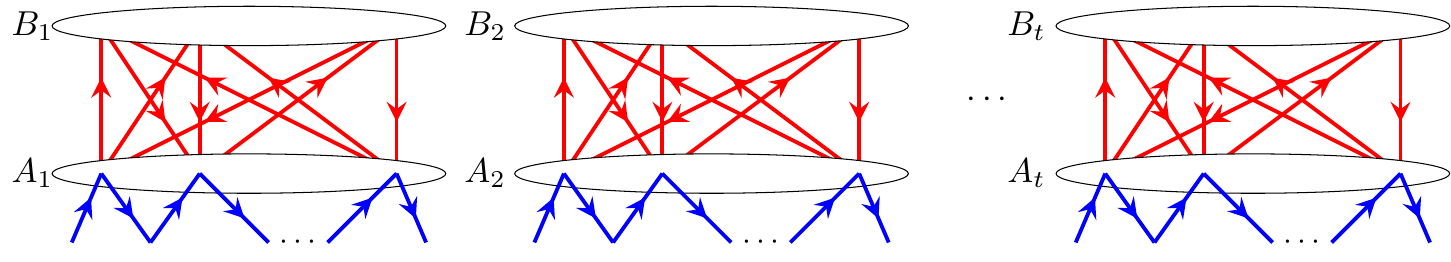}
                \label{fig:red-blue-pairs}
            \end{figure}
            
            We start by showing how to conclude the argument once we find this structure and then we show how to find it in each of the two cases.

            \begin{prop}\label{prop:red-blue pairs}
                Let $\frac{3}{4} am^{2\alpha} \le k \le am^{2\alpha}$ and $t\ge \frac{N}{256k}$. If $G$ contains $(k,t)$-red-blue pairs then $G \rightarrow (\dirpath{n}, T)$. 
            \end{prop}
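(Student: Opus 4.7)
My plan is to construct an auxiliary $2$-coloured complete directed graph $H$ on vertex set $[t]$, apply the directed Ramsey theorem for trees (Theorem~\ref{thm:complete-ramsey}) to it, and then lift the resulting monochromatic substructure to $G$ via the red-blue pairs. Setting $\gamma=\eps/2$, I colour the edge $ij$ of $H$ blue if $e_b(A_i,A_j)\ge \gamma k^2$ and red otherwise. Pseudorandomness gives $e(A_i,A_j)\ge \eps k^2$, so a blue edge in $H$ comes with at least $\gamma k^2$ blue $A_i$-to-$A_j$ edges in $G$, while a red edge comes with at least $\eps k^2/2$ red ones. Let $T'$ be a $(2,1/6)$-tree-split of $T$ furnished by Lemma~\ref{lem:tree-split}: its $\ell\le 2m^{2/3}$ parts $T_1,\dots,T_\ell$ each have order at most $6m^{1/3}$. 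Taking $L=\lceil 2n/k\rceil$, a routine check using $n,m\le N/c$, $nm\le N^2/(c^2\log N)$ and $t\ge N/(256k)$ yields $t\ge c_1(L+\ell)$ for $c$ large enough, where $c_1$ is the constant from Theorem~\ref{thm:complete-ramsey}. That theorem then returns in $H$ either a blue $\dirpath{L}$ or a red copy of $T'$.

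\emph{Blue case.} Given a blue directed path $i_1,\ldots,i_L$ in $H$, I stitch together the blue paths $P_{i_j}$ from the red-blue pairs: for each $j$ I use the subpath of $P_{i_j}$ traversing all $k$ vertices of $A_{i_j}$ that sit on $P_{i_j}$, and I hop from $P_{i_j}$ to $P_{i_{j+1}}$ along a blue edge from $A_{i_j}$ to $A_{i_{j+1}}$ in $G$. A standard pseudorandomness argument shows that fewer than $k/4$ vertices of $A_{i_j}$ have fewer than $\gamma k/2$ blue out-neighbours in $A_{i_{j+1}}$, so the hopping is always possible and can be arranged to use fresh vertices at each end. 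This yields a blue path in $G$ of length at least $L(k-1)\ge n$.

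\emph{Red case.} Suppose $v_i$ of $T'$ is mapped to $A_{\sigma(i)}$. Properties (\ref{itm:tree-split-one-in-edge})--(\ref{itm:tree-split-odd-level}) of the tree-split tell me that each $T_i$ is out-directed from its root and that every edge of $T$ leaving $T_i$ departs from an extending-leaf sitting on an even level of $T_i$. I therefore embed the even levels of $T_i$ inside $A_{\sigma(i)}$ and the odd levels inside $B_{\sigma(i)}$, so that every internal edge of $T_i$ lies inside the fully red bipartite graph $G[A_{\sigma(i)},B_{\sigma(i)}]$. For a cross-subtree edge joining an extending-leaf $u$ of $T_i$ to the root of a child $T_j$, I use that $v_iv_j$ is red in $T'$ to embed $u$ in a vertex of $A_{\sigma(i)}$ with many red out-neighbours in $A_{\sigma(j)}$, and embed the root of $T_j$ in one such neighbour. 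Processing $T'$ in BFS order propagates the embedding into a red copy of $T$ in $G$.

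The main technical hurdle is ensuring that every image of an extending-leaf $u$ simultaneously fits into the bipartite embedding of its $T_i$, has many red out-neighbours in the next target $A_{\sigma(j)}$, and avoids all previously used vertices. Pseudorandomness confines the set of `bad' vertices in $A_{\sigma(i)}$ (those with fewer than $\eps k/4$ red out-neighbours in $A_{\sigma(j)}$) to size less than $\eps k/4$, and the large slack $k\ge (3/4)am^{1/3}=96\eps^{-2}m^{1/3}$ compared to $|T_i|\le 6m^{1/3}$ leaves ample room to make each greedy choice after excluding bad and previously used vertices.
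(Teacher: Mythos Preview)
Your overall architecture matches the paper's: build an auxiliary $2$-coloured complete digraph on $[t]$, apply Theorem~\ref{thm:complete-ramsey} to get either a long blue path or a red copy of the $(2,\alpha)$-tree-split, and lift. But the auxiliary colouring you chose is too coarse, and two of your quantitative claims do not follow from it.

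You colour $ij$ blue if $e_b(A_i,A_j)\ge (\eps/2)k^2$ and red otherwise. In the blue case you assert that ``fewer than $k/4$ vertices of $A_{i_j}$ have fewer than $\gamma k/2$ blue out-neighbours in $A_{i_{j+1}}$''. This is false: an edge count of $(\eps/2)k^2$ only forces about $\eps k/4$ vertices to have $\ge \eps k/4$ blue out-neighbours (all the mass could be concentrated on a small set), so the path you stitch picks up only $\Theta(\eps k)$ vertices per pair, not $k$. In the red case you assert that ``pseudorandomness confines the bad vertices (those with fewer than $\eps k/4$ red out-neighbours in $A_{\sigma(j)}$) to size $<\eps k/4$''. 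This is also false: if $|X|$ is the bad set, pseudorandomness plus $e_b<(\eps/2)k^2$ only gives $(3/4)\eps|X|k<(\eps/2)k^2$, i.e.\ $|X|<2k/3$, not $|X|<\eps k/4$. So the ``good'' set in $A_{\sigma(i)}$ is only guaranteed to have size $\Theta(k)$, not $(1-o(1))k$.

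The paper avoids both problems by colouring $ij$ blue precisely when \emph{at least $(1-\eps/4)k$ vertices in $A_i$ have $\ge \eps k/2$ blue out-neighbours in $A_j$}, so the degree information needed for lifting is baked into the definition rather than deduced after the fact. Even with this colouring, the good set $X_s$ in the red case has size only about $\eps k/8$, and your top-down BFS sketch does not explain how an extending-leaf, whose parent sits in $B_{\sigma(i)}$, is guaranteed to land in this small $X_s$ while simultaneously supporting the embedding above it. The paper handles this by building candidate sets $D_i$ \emph{from the leaves of the tree-split upward}: first $X_s\subseteq A_i$, then $Y\subseteq B_i$ (using that all $A_i$--$B_i$ edges are red, so almost every vertex of $B_i$ has many red out-neighbours in each $X_s$), and finally $D_i\subseteq A_i$, showing $|D_i|\ge k/2$. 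Your BFS propagation would need essentially the same analysis; as written it is a gap.
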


            \begin{proof}
  
                Let us define an auxiliary complete directed graph $K$ on vertex set $[t]$. We colour the edge $ij$ blue if at least $(1-\frac{\eps}{4})k$ vertices in $A_i$ have at least $\frac{\eps}{2} k$ blue out-neighbours in $A_j$ and red otherwise.

                Theorem \ref{thm:complete-ramsey} implies that in the auxiliary graph $K$ there is a directed blue path of order $\frac{t}{2c_1}$ or any oriented red tree of order $\frac{t}{2c_1}$, for some positive constant $c_1$. We claim that the constant $c$ can be chosen such that the following two inequalities hold.
                \begin{enumerate}[(1)]
                    \item \label{itm:3.1}
                    $t\ge \frac{8c_1}{\eps k}n$,
                    \item \label{itm:3.2}
                    $t\ge 4 c_1 m^{1-2\alpha}$.
                \end{enumerate}
 
                Inequality (\ref{itm:3.1}) follows, as, by the assumptions of Proposition \ref{prop:red-blue pairs} and Theorem \ref{thm:bounded-tree-vs-path},  $t\ge \frac{N}{256k}$ and $ N \ge cn \ge 256\cdot 8\eps^{-1} c_1n$,  where the last inequality holds under the assumption that $c\ge 2^{11}c_1\eps^{-1}$. Inequality (\ref{itm:3.2}) follows from the assumption that $N\ge cm$ as $ t\ge \frac{N}{256k}\ge \frac{N}{256am^{2\alpha}} \ge \frac{c}{256a}m^{1-2\alpha} \ge 4 c_1 m^{1-2\alpha},$ given $c\ge 2^{10} a$.
    
                Suppose that there exists a blue path $P=i_1 i_2 \dots i_\ell$ of order $\ell := \frac{t}{2c_1}$ in $K$. We will explain how to lift this path to a blue path in $G$ by using subpaths of order at least $s:=\frac{\eps}{4}k$ from each of the blue paths $P_{i_j}$ associated to $A_{i_j}$ for $j \in [\ell]$.

                For every $j \in [\ell]$ let us denote by $E_j \subseteq A_{i_j}$ the set of vertices with at least $2s=\frac{\eps}{2}k$ blue out-neighbours in $A_{i_{j+1}}$. Note that $|E_j| \ge (1-\frac{\eps}{4})k$, since the edge $i_j i_{j+1}$ is blue in $K$.

                We start with an initial subpath $P'$ of the path $P_{i_1}$ associated to $A_{i_1}$ which ends in the last vertex from $P_{i_1}$ contained in $E_1$. Note that $|P'|\ge |E_1|\ge s$, since $P_{i_1}$ covers $A_{i_1}$, so also $E_1$.

                Suppose that we already have a path $P'$ of order at least $s(j-1)$ in  $\bigcup_{r\in [j-1]} A_{i_r}$ whose last vertex $v$ is contained in $E_{j-1}$.

                Let $S_{j}$ be the blue out-neighbourhood of $v$ in $A_{i_j}$ and denote by $u_1,u_2, \dots, u_{k}$ the vertices of the blue path $P_{i_j}$ in $A_{i_j}$ ordered according to their order in $P_{i_j}$. We extend $P'$ by the path $Q$ consisting of $u_p, u_{p+1},\dots, u_{q}$, where $p$ is the smallest index among the vertices in $S_j$ and $q$ the largest index among the vertices in $E_j$. Note that $Q$ has order at least $s$, since
                \begin{align*}
                    |E_j \cap S_j| &= |E_j| + |S_j| - |E_j \cup S_j| \\
                    &\ge |E_j|+|S_j| - |A_{i_j}| \\
                    &\ge \left(1-\frac{\eps}{4}\right)k + \frac{\eps}{2}k - k = \frac{\eps}{4}k=s.
                \end{align*}

                The blue path in $K$ has order $\frac{t}{2c_1}$, so this process produces a blue path in $G$ of order at least $\frac{ts}{2c_1} \ge \frac{\frac{8c_1n}{\eps k}\cdot \frac{\eps k}{4}}{2c_1}= n$, where we used inequality (\ref{itm:3.1}); this completes the proof of Proposition \ref{prop:red-blue pairs} in the case where $K$ contains a long blue path.

                Otherwise, i.e.\ if $K$ does not have a blue path of order $\frac{t}{2c_1}$, then $K$ contains a red copy of a $(2,\alpha)$-tree-split of $T$ (obtained from Lemma \ref{lem:tree-split}), since $\frac{t}{2c_1}\ge 2m^{1-2\alpha},$ by inequality (\ref{itm:3.2}). We now explain how to lift the tree-split from $K$ to a red copy of $T$ in $G$. We relabel the vertices of $K$ in such a way that the vertices of the tree-split, we found in $K,$ are $[\ell]$, for $\ell$ being the order of the tree-split, and vertex $i$ representing a subtree $T_i$ of $T$. Our aim is to embed the subtrees $T_i$ in $A_i\cup B_i.$ To that end we pick `candidate sets' $D_i \subseteq A_i$ which satisfy some useful properties.

                \begin{claim} \label{claim:lift-tree}
                    Suppose that for each vertex $i$ of the tree-split we have a non-empty set of candidates $D_i \subseteq A_i$, such that for any $v \in D_i$ there is a tree $T(v)$ with the following properties.
                    \begin{enumerate}[(i)]
                        \item \label{itm:T-i}
                    	    $T(v)$ is a red copy of $T_i$ embedded in $A_i \cup B_i$ and rooted at $v$,
                        \item \label{itm:edge-D-j}
                    	    each vertex $u$ in $T(v)$ that corresponds to an extending-leaf $w$ in $T_i$ (as a subtree of $T$) is in $A_i$ and has a red out-edge towards $D_j$ if $j$ is such that there is an edge from $w$ to $T_j$ in $T$.
                    \end{enumerate}
                    Then we can find a red copy of tree $T$ inside the tournament $G$.
                \end{claim}

                \begin{proof}
                    Let us denote by $T'$ the subtree of the tree-split containing the root $v$ of $T$. From the set of candidates for the root of $T'$ we can pick any vertex we want and set $T' := T(v)$. By property (\ref{itm:edge-D-j}) we can choose the roots of the adjacent subtrees in the corresponding candidate sets and by (\ref{itm:T-i}) we can embed the subtrees themselves as well. Note that as all $A_i$ and $B_i$ are disjoint, we do not use any of the vertices twice. Repeating this argument eventually produces a red copy of $T$ in the tournament $G$.
                \end{proof}

                We now show how to construct appropriate candidate sets consisting of at least $k/2$ vertices. For this we begin with the leaves of the tree-split and then make our way up, in the sense that we deal with the candidate set of a particular vertex from the tree-split only if we already defined the candidate sets for all its out-neighbours. 

                Let us define the candidate set $D_i$, where the out-neighbours of vertex $i$ in the tree-split are $j_s$ for $s \in[h]$ (if $T_i$ is a leaf-tree then $h=0$). Note that by the assumption on the number of leaves of the tree $T$ and the definition of a tree-split, $h \le m^{\alpha}$. Furthermore, by construction, the candidate sets $D_{j_s}$ have already been defined (this condition also holds, in particular, for leaf-trees).

                For each $s \in [h]$, let $X_{s}\subseteq A_i$ be the set of vertices with at least one red out-neighbour in $D_{j_s}$. These sets will host the extending-leaves and guarantee property (\ref{itm:edge-D-j}) of Claim \ref{claim:lift-tree}. Let $Y$ be the set of vertices in $B_i$ that send at least $|T_i| + \sigma \log N$ red edges into each set $X_s$; if there are no such sets $X_s$ (i.e.\ $T_i$ is a leaf-tree) we let $Y$ be the set of vertices in $B_i$ with at least $|T_i| + \sigma \log N$ red out-neighbours in $A_i$. Finally, let $D_i$ be the set of vertices in $A_i$ that send at least $|T_i|$ red edges into $Y$.
                \begin{claim} \label{claim:size-D-i}
            		$|D_i| \ge |A_i| - \sigma \log N \ge k/2$.
                \end{claim}
            
            	\begin{proof}
            		Firstly, we show that $|X_s| \ge \frac{\eps}{8}k$ for every $s \in [h]$.
            		Indeed, as $|D_{j_s}|\ge k/2 \ge \sigma \log N$, by pseudorandomness, all but at most $\sigma \log N$ vertices of $A_i$ send at least $\eps |D_{j_s}| \ge \frac{\eps}{2}k$ edges into $D_{j_s}$. Since $i j_s$ is a red edge in the auxiliary graph $K$, at most $(1 - \frac{\eps}{4})k$ of these vertices send at least $\frac{\eps}{2}k$ blue edges into $D_{j_s}$. It follows that there are at least $|A_i| - (1 - \frac{\eps}{4})k - \sigma \log N \ge \frac{\eps}{8}k$ vertices in $A_i$ with at least one red out-neighbour in $D_{j_s}$, i.e.\ $|X_s| \ge \frac{\eps}{8}k$, as claimed. 
            		
            		We now claim that $|Y| \ge k/2$. Indeed, since all edges between $A_i$ and $B_i$ are red, by pseudorandomness, all but at most $\sigma \log N$ vertices of $B_i$ send at least $\eps |X_s| \ge \frac{\eps^2}{8}k \ge |T_i| + \sigma \log N$ red edges into $X_s$, for every $s \in [h]$. Hence, $|Y| \ge |B_i| - h \cdot \sigma \log N \ge k - m^{\alpha} \cdot \sigma \log N \ge k/2$. A similar argument shows that $|Y| \ge k/2$ if $T_i$ is a leaf-tree.
            		
            		Finally, by pseudorandomness and since all edges from $A_i$ to $B_i$ are red, we find that $|D_i| \ge |A_i| - \sigma \log N \ge k/2$, as required.
            	\end{proof}

                Now let us explain why each vertex in $D_i$ is a candidate for the root of $T_i$. Note that the bipartite graph $G[D_i, Y]$ has minimum red out-degree at least $|T_i|$, allowing us to greedily embed a copy $T'$ of the subgraph of $T_i$ obtained by removing its extending leaves. By the property that all extending leaves lie at even distance from the root of their corresponding subtree, the parent of such a leaf $u$ is embedded in $Y$ and has at least $|T_i|$ red out-neighbours in the corresponding set $X_{s}$. Since we have not embedded all the vertices of $T_i$ yet, we can embed $u$ in $X_s$. 

                These candidate sets $D_i$ satisfy conditions of Claim \ref{claim:lift-tree}, so we can find a red copy of the desired tree in $G$. This completes the proof of Proposition \ref{prop:red-blue pairs}.
            \end{proof}

            In order to complete the proof of Theorem \ref{thm:bounded-tree-vs-path} in Case 1, it now remains to show how to find $(k,t)$-red-blue pairs in the Cases (1a) and (1b).

            \subsubsection*{Case 1a. many disjoint medium blue cycles}
                We assume that there is a collection of vertex-disjoint medium blue cycles $C_1, C_2, \dots, C_{t'}$ which cover at least $N/4$ vertices. Since the medium cycles have length at most $bm^{2\alpha}$, we get that $t'\ge \frac{N}{4 bm^{2\alpha}}$.

                Let $H$ be an auxiliary 2-coloured complete directed graph on vertex set $[t']$. We colour the edge $ij$ blue if at least $\frac{a}{4} m^{2\alpha}$ vertices in $C_i$ have a blue out-neighbour in $C_j$ and red otherwise. Now we consider a maximal red-red matching $M$, namely a matching that consists of edges that are red in both directions.

                First we suppose that the matching $M$ covers at most $t'/2$ vertices. Since this matching is maximal, for every two vertices $i$ and $j$ not covered by $M$ at least one of the directed edges $ij$ and $ji$ is blue.  In particular, there is a blue subtournament on at least $t'/2$ vertices. Since every tournament contains a directed Hamiltonian path, we thus find a blue directed path of order $t'/2$ in the auxiliary graph $H$. The following claim explains how to lift this path to the tournament $G$.

                \begin{claim} \label{claim:lift-path}
                    Let $G$ be an oriented graph with pairwise vertex-disjoint cycles $C_1,C_2,\dots,C_k$ such that for each $i<k$ there are at least $r$ vertices in $C_i$ that have an out-neighbour in $C_{i+1}$. Then $G$ contains a directed path of order $k \cdot r$.
                \end{claim}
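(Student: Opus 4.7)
The plan is to build the desired path by concatenating arcs along the cycles $C_1, C_2, \dots, C_k$, spending at least $r$ vertices on each cycle and hopping between consecutive cycles via the out-edges guaranteed by the hypothesis. For $i < k$, let $S_i \subseteq V(C_i)$ denote the set of vertices of $C_i$ possessing an out-neighbour in $C_{i+1}$, so $|S_i| \ge r$ by assumption.

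The key lemma is the following pigeonhole observation: for any vertex $u \in V(C_i)$ and any set $S \subseteq V(C_i)$ with $|S| \ge r$, there is some $v \in S$ such that the directed arc from $u$ to $v$ along $C_i$ contains at least $r$ vertices. Indeed, the ``bad'' choices of $v$, namely those for which the $u$-to-$v$ arc uses fewer than $r$ vertices, are exactly $u$ and its first $r-2$ successors along $C_i$, a set of size exactly $r-1$; since $|S| \ge r$, $S$ must contain a good vertex.

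Given this lemma, I would construct the path greedily. Pick any $v_1 \in S_1$ and begin with the $r$-vertex arc of $C_1$ ending at $v_1$. Inductively, having built the path up through an exit vertex $v_i \in S_i$, follow an out-edge from $v_i$ to some $u_{i+1} \in V(C_{i+1})$; then apply the key lemma to $u_{i+1}$ and $S_{i+1}$ (or, when $i+1 = k$, simply take the $r$-vertex arc of $C_k$ starting at $u_{i+1}$) to find an arc in $C_{i+1}$ starting at $u_{i+1}$, ending at some $v_{i+1} \in S_{i+1}$, and using at least $r$ vertices. Concatenating these arcs along with the jump edges between consecutive cycles yields a directed path in $G$ whose vertices are distinct (cycles are pairwise vertex-disjoint, and within each $C_i$ the arc visits distinct vertices since $r \le |C_i|$) and whose total order is at least $kr$, as required.

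There is no significant obstacle: the whole argument is a single pigeonhole step combined with a routine iterative construction. The only mild subtlety is the implicit requirement that $|C_k| \ge r$ in order to obtain $r$ vertices from the last cycle, which is immediate in the intended application (the $C_i$'s are medium cycles of length at least $a m^{2\alpha} \ge r$) even though it is not stated explicitly in the claim.
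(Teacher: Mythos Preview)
Your proof is correct and follows essentially the same approach as the paper: both build the path greedily, walking along each cycle from the entry point to a suitable exit vertex in $S_i$ so as to collect at least $r$ vertices per cycle. The only cosmetic difference is that the paper walks all the way to the \emph{last} vertex of $S_i$ in the cyclic order (thereby picking up all of $S_i$), whereas you use a pigeonhole count to locate some vertex of $S_i$ at distance at least $r-1$; your observation about the implicit need for $|C_k|\ge r$ applies equally to the paper's argument.
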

                \begin{proof}
                    We start with a path $P'$ which begins at an arbitary vertex of $C_1$ and follows the cycle $C_1$ up to the last vertex that sends an edge towards $C_2$. Note that $|P'|\ge r$.

                    Assume that $P'$ is a path of order at least $r(i-1)$ with vertices in $\bigcup_{j\in [i-1]} C_j$, such that its last vertex has an out-neighbour $w$ in $C_i$. We extend $P'$ by the path $Q$ starting at $w$ and following $C_i$ up to the last vertex which sends an edge to $C_{i+1}$, or until the last vertex in $C_i$ if $i=k$. Note that $|Q|\ge r$ and therefore the new path $P'$ has order at least $r\cdot i$.
                \end{proof}

                By applying Claim \ref{claim:lift-path} to the induced blue subgraph of $G$ we find a blue path of order at least
            	    $$\frac{a}{4}m^{2\alpha}\cdot t'/2 \ge \frac{a}{32b}N \ge \frac{N}{256}\ge n,$$
                (where the last inequality holds by assuming that $c\ge 256$) as desired.

                Therefore, we can assume that the matching $M$ covers at least $t'/2$ vertices of $H$. This corresponds to $t:= t'/4$ disjoint pairs of cycles $(C_{i_s},C_{j_s})$, for $s \in [t]$, where at least $k:= \frac{3}{4}am^{2\alpha}$ vertices in $C_{i_s}$ do not have a blue out-neighbour in $C_{j_s}$ and vice versa. Hence we can find subsets $A_s$ and $B_s$ of $C_{i_s}$ and $C_{j_s}$, respectively, of size $k$ each, with only red edges between them. Each $A_s$ lies in a different cycle $C_{i_s}$ from the original collection of disjoint medium blue cycles. Thus, we can define $P_s$ to be the cycle $C_{i_s}$ minus one edge. This way $P_s$ contains all vertices of $A_s$ and the paths $P_s$ are pairwise disjoint.

                Note that $t\ge \frac{3N}{512k}$, since $t'\ge \frac{N}{4bm^{2\alpha}} \ge \frac{N}{32am^{2\alpha}} = \frac{3N}{128k}. $ So Proposition \ref{prop:red-blue pairs} applies and concludes the proof of Theorem \ref{thm:bounded-tree-vs-path} in this case.

        \subsubsection*{Case 1b. a large set with many disjoint long blue cycles but no blue medium cycle}
            Suppose Case 1a does not hold, thus there exists a set $U$ of at least $3N/4$ vertices which does not contain any medium blue cycle.

            Let us consider the following process which starts with $U':= U$. As long as there exists a long blue cycle in $U'$ we pick a shortest one, say $C$ and define $U'=U'\setminus C$.
            This process eventually terminates and produces a sequence of disjoint long blue cycles $C_1,C_2,\dots,C_{t'}$. 
            In this case we are going to assume that these cycles cover at least $N/4$ vertices. Note that $t' \le \frac{N}{4bm^{2\alpha}}$, since each long cycle contains at least $bm^{2\alpha}$ vertices.

            Note that for every $i\in [t']$ all chords in $C_i$ of length at least $am^{2\alpha}$ are red, since otherwise we would find a blue cycle inside $C_i$ which is either a medium cycle, or a shorter long cycle, contradicting our choice of $C_i$ as the shortest remaining long cycle.

            Write $C_i = (v_1 v_2 \dots v_r)$ and $k:= am^{2\alpha}$.
            Define sets
            $$
                A_i=\{v_1,v_2,\dots,v_{r/2-2k}\} \quad \text{and} \quad B_i=\{v_{r/2-k},v_{r/2-k+1},\dots,v_{r-k}\}.
            $$
            By the argument above we have that $G[A_i,B_i]$ spans red edges only. 
            Let $A_{i,1}, \ldots, A_{i,r(i)}$, where $r(i) = \floor{|A_i|/k}$, be pairwise disjoint sets of $k$ consecutive vertices (with respect to $C_i$) in $A_i$, and let $B_{i,1}, \ldots, B_{i, r(i)} \subseteq B$ be defined similarly.  The sets $A_{i,j}, B_{i,j}$ cover all but at most $4k$ vertices of each cycle $C_i$, hence they cover at least $N/4-t'\cdot 4k$ vertices in total. Since the number of sets $A_{i,j}$ and sets $B_{i,j}$ is the same, and each set has size $k$, it follows that the number $t$ of sets $A_{i,j}$ satisfies 
            	$$t \ge \frac{N}{8k} - 2t' \ge \frac{N}{8k} - \frac{N}{2bm^{2\alpha}} = \frac{N}{8k} - \frac{N}{16k} = \frac{N}{16k},$$
            	where the first equality follows from the choice $b=8a$ which implies that $bm^{2\alpha}=8k.$ Note that the collection of pairs $(A_{i,j}, B_{i,j})$ forms a $(k,t)$-red-blue pairs structure, as each set $A_{i,j}$ contains a spanning blue path (which is a part of the cycle $C_i$), which are mutually disjoint. Proposition \ref{prop:red-blue pairs} can now be used to complete the proof of Theorem \ref{thm:bounded-tree-vs-path} in this case as well.
 
        \subsubsection*{Case 2. a large set of vertices spanning no blue medium or long cycle}

            In the remaining case, the process of picking the sequence of disjoint long blue cycles in Case $1b$ terminated before it covered at least $N/4$ vertices. Hence, we are left with a set $U$ that covers at least $N/2$ vertices and spans neither medium nor long blue cycles.

            Let us start this case with an elementary observation.
            \begin{obs} \label{obs:cycle-by-degree}
                Every directed graph $G$ with minimum out-degree $d$ contains a cycle of length at least $d+1$.
            \end{obs}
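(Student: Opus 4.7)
The plan is to use the standard longest-path argument. I would start by considering a longest directed path $P = v_1 v_2 \ldots v_k$ in $G$, which exists since $G$ is finite. The key observation is that every out-neighbour of the final vertex $v_k$ must lie on $P$ itself: if $v_k$ had an out-neighbour $u \notin V(P)$, then $v_1 v_2 \ldots v_k u$ would be a strictly longer directed path, contradicting the maximality of $P$.

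Next, since $v_k$ has out-degree at least $d$ in $G$, and all $d$ of these out-neighbours lie among $\{v_1, \ldots, v_{k-1}\}$, I would take $v_i$ to be the out-neighbour of $v_k$ with the smallest index. Then the $d$ out-neighbours all sit in $\{v_i, v_{i+1}, \ldots, v_{k-1}\}$, which has size $k - i$, so $k - i \ge d$. The closed walk $v_i v_{i+1} \ldots v_k v_i$ is a directed cycle of length $k - i + 1 \ge d + 1$, completing the proof.

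There is no real obstacle here; the only thing to be careful about is that we really do obtain a cycle (not merely a closed walk with repeats), which is automatic because $v_i, \ldots, v_k$ are distinct vertices of the path $P$ and the closing edge $v_k v_i$ is the single extra edge used.
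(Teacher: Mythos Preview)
Your proof is correct and follows essentially the same longest-path argument as the paper: take a longest directed path, note that all out-neighbours of its last vertex lie on the path, and close up using the earliest such out-neighbour to obtain a cycle of length at least $d+1$.
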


            \begin{proof}
              Let $v_1  \dots v_\ell$ be a longest directed path in $G$. By the maximality of this path we get that $v_\ell$ has no out-neighbour outside of this path. Since the out-degree of $v_\ell$ is at least $d$ it has at least $d$ out-neighbours among $v_1, \dots, v_{\ell-1}$. Let $s$ be the smallest index among these out-neighbours of $v_\ell$. Then $(v_s v_{s+1} \dots v_\ell$) is a cycle of length at least $d+1$.
            \end{proof}

            This observation allows us to obtain an ordering of the vertices in $U$ with `few' blue edges going forward.
            \begin{claim} \label{claim:ordering}
                There exists an ordering $u_1,u_2, \dots, u_{|U|}$ of the vertices in $U$ such that for every $i$ there exists at most $am^{2\alpha}$ indices $j>i$ such that there is a blue edge from $u_i$ to $u_j$.
            \end{claim}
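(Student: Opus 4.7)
The plan is a greedy peeling argument that builds the ordering one vertex at a time, at each step selecting a vertex of small blue out-degree in the remaining set. The key leverage is that the hypothesis on $U$ -- that it spans no medium or long blue cycle -- combined with Observation~\ref{obs:cycle-by-degree}, forces such a low-degree vertex to exist at every stage.

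Concretely, I would set $R_1 := U$ and construct $u_1, u_2, \ldots, u_{|U|}$ inductively, maintaining $R_{i+1} := R_i \setminus \{u_i\}$. At step $i$ I would apply Observation~\ref{obs:cycle-by-degree} to the blue subdigraph of $G[R_i]$: if every vertex of $R_i$ had blue out-degree at least $am^{2\alpha}$ within $R_i$, then this blue subdigraph would contain a directed cycle of length at least $am^{2\alpha}+1$, lying entirely inside $U$. Such a cycle is medium or long (since its length strictly exceeds $am^{2\alpha}$), contradicting the defining property of $U$. Therefore some vertex of $R_i$ has blue out-degree strictly less than $am^{2\alpha}$ in $G[R_i]$, and I would take such a vertex as $u_i$.

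To conclude, I would observe that the forward blue edges from $u_i$ in the resulting ordering -- i.e.\ blue edges $u_iu_j$ with $j>i$ -- all land in $R_{i+1} \subseteq R_i$, so their number is bounded by the blue out-degree of $u_i$ in $G[R_i]$, which is strictly less than $am^{2\alpha}$ by construction, hence certainly at most $am^{2\alpha}$, as required.

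There is no real obstacle here beyond bookkeeping; the only point requiring care is the direction of the ordering, making sure that the vertex chosen at step $i$ is placed at position $i$ (so that ``forward'' edges in the ordering correspond precisely to blue out-edges into the current remaining set $R_{i+1}$).
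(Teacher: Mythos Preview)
Your proposal is correct and is essentially identical to the paper's own proof: both arguments greedily peel off, at each step, a vertex of blue out-degree less than $am^{2\alpha}$ in the remaining set, using Observation~\ref{obs:cycle-by-degree} together with the absence of medium or long blue cycles in $U$ to guarantee such a vertex exists. The only cosmetic difference is notation (your $R_i$ versus the paper's $U'$).
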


            \begin{proof}
                Suppose that there exists a subgraph of $G[U]$ which has minimum blue out-degree at least $a m^{2\alpha}$.
                Then by Observation \ref{obs:cycle-by-degree} we find a blue cycle of order at least $a m^{2\alpha}$, a contradiction. Therefore, there exists a vertex $u_1\in U$ with blue out-degree at most $a m^{2\alpha}$.
                Now suppose that $u_1,u_2,\dots,u_{i-1}$ are defined. In a similar way we define $u_i$ to be a vertex with blue out-degree at most $a m^{2\alpha}$ in $G[U']$, where $U'=U\setminus \{u_1,u_2,\dots,u_{i-1}\}$.
                We repeat this as long as $i\le |U|$. The resulting ordering $u_1,u_2,\dots,u_{|U|}$ satisfies the requirement of the claim.
            \end{proof}

            Let $k= \frac{N}{32m^{1-3\alpha}}$. We set $t:= |U|/k$ and denote $U_i=\{ u_{(i-1)k+1},\dots,u_{ik}\}$ for $i \in [t]$. We claim that we can choose the constant $c$ such that the following two inequalities hold.
            \begin{enumerate}[(1)]
                \item \label{itm:2.1} $t \ge 16m^{1-3\alpha}$,
                \item \label{itm:2.2} $k \ge 128 \eps^{-2}am^{3\alpha}$.
            \end{enumerate}
            Indeed, inequality (\ref{itm:2.1}) follows independently from $c$, as $t=\frac{|U|}{k}\ge \frac{N}{2k}=16m^{1-3\alpha}.$ We obtain inequality (\ref{itm:2.2}) from $N \ge cm,$ given $c\ge 2^{12}\eps^{-2}a$, as $k=\frac{N}{32m^{1-3\alpha}} \ge \frac{cm}{32m^{1-3\alpha}} = \frac{c}{2^{5}} m^{3\alpha} \ge 2^7\eps^{-2}am^{3\alpha}.$

            Let $c_2$ be the constant from Theorem \ref{thm:path-vs-path} with parameters $\eps$ and $\frac{\sigma}{3\alpha}$.
            By choosing $c\ge 128 c_2,$ we obtain from $N\ge cn, cm$  that $k/4=\frac{N}{128m^{1-3\alpha}}\ge c_2 \frac{n}{m^{1-3\alpha}}, c_2 m^{3\alpha}$.
            Similarly, we get $k/4 \ge c_2 \sqrt{ \frac{n}{m^{1-3\alpha}} m^{3\alpha} \log(k/4)},$ from $N \ge c \sqrt{nm\log N}$.
            
            Also note that $k/4 =\frac{N}{128m^{1-3\alpha}} \ge N ^{3 \alpha}$ so $ \sigma \log (k/4) \ge 3 \alpha \sigma \log N,$ implying that any subtournament of $G$ of order $k/4$ is $(\eps, \frac{\sigma}{3\alpha} \log(k/4))$-pseudorandom. Therefore, Theorem \ref{thm:path-vs-path} applies for paths of order $\frac{n}{m^{1-3\alpha}}$ and $m^{3\alpha}$, within any subset of vertices of size at least $k/4.$
            
            \begin{claim} \label{claim:red-blue-sets}
                One of the following holds, for each set $U_i$.
                \begin{enumerate}[(i)]
                    \item \label{itm:blue}
                        There are at least $k/8$ pairwise disjoint pairs of vertices in $U_i$ that are joined by a blue path, contained in $U_i$, of order $\frac{n}{m^{1-3\alpha}}$,
                    \item \label{itm:red}
                        For each $2 \le \ell \le m^{3\alpha}$, there are at least $k/4$ pairwise disjoint pairs of vertices in $U_i$ that are joined by a red path, contained in $U_i$, of order $\ell$.
                \end{enumerate}
            \end{claim}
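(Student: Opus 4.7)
The key input, established in the paragraph immediately preceding the claim, is that \Cref{thm:path-vs-path} applies to the induced subtournament on any subset of $U_i$ of size at least $k/4$: such a subtournament contains either a blue $\dirpath{n/m^{1-3\alpha}}$ or a red $\dirpath{m^{3\alpha}}$, and in the latter case, by taking a prefix, also a red $\dirpath{\ell}$ for every $2\le\ell\le m^{3\alpha}$.

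Fix $i$. First I would run a greedy procedure aiming at (\ref{itm:blue}): set $W:=U_i$ and, while $W$ contains a blue $\dirpath{n/m^{1-3\alpha}}$, pick such a path, record its two endpoints as a new pair, and delete \emph{only} these two endpoints from $W$; stop once $k/8$ pairs have been recorded. If the target $k/8$ is reached, then (\ref{itm:blue}) holds for $U_i$ and we are done. Otherwise the procedure terminates because no blue $\dirpath{n/m^{1-3\alpha}}$ remains in $W$; at most $k/8-1$ pairs were recorded, so at most $k/4-2$ vertices were deleted, leaving a set $W_0\subseteq U_i$ with $|W_0|\ge 3k/4+2$ that contains no blue $\dirpath{n/m^{1-3\alpha}}$.

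Now I would fix an arbitrary $\ell\in[2,m^{3\alpha}]$ and run the analogous greedy procedure starting from $W:=W_0$, this time searching for red $\dirpath{\ell}$'s and again deleting only the two endpoints of each chosen path. Whenever $|W|\ge k/4$ during this loop, the dichotomy above forces a red $\dirpath{m^{3\alpha}}$ in $W$ (since $W\subseteq W_0$ cannot contain a blue $\dirpath{n/m^{1-3\alpha}}$), and truncating yields a red $\dirpath{\ell}$. Starting from $|W_0|\ge 3k/4+2$ and decreasing $|W|$ by exactly $2$ per iteration, the loop is guaranteed to run at least $k/4$ times before $|W|$ could drop below $k/4$, producing $k/4$ pairwise disjoint endpoint-pairs in $U_i$ each joined by a red $\dirpath{\ell}$ contained in $U_i$. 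Since $\ell$ was arbitrary, (\ref{itm:red}) follows.

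The only delicate point is the accounting: by removing only the two endpoints (not the entire path) at each step, one preserves enough of $U_i$ to keep satisfying the hypothesis ``size at least $k/4$'' required by \Cref{thm:path-vs-path} throughout the second loop, so the thresholds $k/8$ and $k/4$ fit together with the budget $|U_i|=k$. Beyond this bookkeeping no further obstacle arises, since the pseudorandomness of all relevant subtournaments has already been verified just before the claim.
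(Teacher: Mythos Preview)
Your proposal is correct and follows essentially the same approach as the paper: a greedy first loop deleting endpoints of blue paths to either achieve (\ref{itm:blue}) or leave a set $W_0$ of size at least $3k/4$ with no long blue path, followed by a second greedy loop inside $W_0$ using \Cref{thm:path-vs-path} to extract red $\dirpath{\ell}$'s while deleting only endpoints. Your bookkeeping is slightly more explicit than the paper's, but the argument is the same.
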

            \begin{proof}
                Consider the following process. As long as there is a blue path of order $\frac{n}{m^{1-3\alpha}}\ge 2$ in $U_i$ (where the inequality follows since $n \ge \frac{N}{c\log N}$ and $m\le N/c$) we remove its first and last vertex. If this process runs for at least $k/8$ rounds then (\ref{itm:blue}) holds. 

                Otherwise, there is a subset $W \subseteq U_i$ of size at least $\frac{3}{4}k$ with no blue path of order $\frac{n}{m^{1-3\alpha}}$. Consider the following process. As long as there are $k/4$ vertices left in $W$ we can apply Theorem \ref{thm:path-vs-path} to find a red path of order $\ell$ (since $\ell \le m^{3\alpha}$) and remove its first and last vertex. Since we remove only two vertices in each round this process runs for at least $k/4$ rounds. Thus (\ref{itm:red}) holds.
            \end{proof}
            If (\ref{itm:blue}) holds, we say that $U_i$ is blue; otherwise, we say that $U_i$ is red. We now distinguish two cases depending on the majority colour of the sets $U_i$.

            \subsubsection*{Case 2a. most of the sets $U_i$ are red}
                In this case there are at least $t/2$ red sets $U_i$ so, while preserving the ordering, we rename $t/2$ red sets $U_i$ as $V_1,V_2,\dots, V_{t/2}$. Note that when $i<j$ we have by Claim \ref{claim:ordering} that every vertex in $V_i$ has at most $a m^{2\alpha}$ blue out-neighbours in $V_j$. Let us view $V_1,V_2,\dots V_{t/2}$ as vertices of a transitive tournament with edges pointing always towards the bigger index. Let $T'$ be an $\alpha$-path-split of $T$ (see Lemma \ref{lem:path-split}). By Observation \ref{obs:transitive}, we can find a copy of $T'$ inside this transitive tournament, since inequality (\ref{itm:2.1}) implies that $t/2 \ge 5m^{1-3\alpha}$.

                We now show that if we define appropriate candidate sets for each start-vertex of a path in the path-split, than we can greedily find a red copy of $T$ in $G,$ in a similar manner as in Case 1. Let us denote by $P_i$ the path corresponding to the vertex $i$ of the embedded path-split.

                \begin{claim}\label{claim:lift-tree-v2}
                    Suppose that for each vertex $i$ of the path-split we have a non-empty set of candidates $D_i \subseteq V_i$, such that for any $v \in D_i$ there is a subpath $P(v)$ of $T$ which satisfies
                    \begin{enumerate}[(i)]
                        \item $P(v)$ is a red copy of $P_i$ embedded within $V_i$ with start-vertex $v$,
                        \item the end-vertex $u$ of $P(v)$ has a red out-edge towards $D_j$ for each $j$ which is a child of $i$ in the path-split.
                    \end{enumerate}
                    Then we can find a red copy of tree $T$ inside the tournament $G$.
                \end{claim}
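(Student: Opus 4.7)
My plan is to build the red copy of $T$ inductively, processing the vertices of the path-split in a top-down BFS order starting from the vertex of the path-split (call it $1$) that corresponds to the subpath containing the root of $T$. At each step, for a path-split vertex $i$ that is being processed, I will choose a candidate $v_i \in D_i$ and take $P(v_i)$ as the embedding of $P_i$; the hypothesis guarantees this is a red copy of $P_i$ contained in $V_i$ with start-vertex $v_i$.

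First I would pick any $v_1 \in D_1$ (which is non-empty by assumption) and embed $P_1$ as $P(v_1)$. Suppose inductively that vertex $i$ of the path-split has been processed, so that $P_i$ has been embedded as $P(v_i)$ with end-vertex $u_i$. For every child $j$ of $i$ in the path-split, property (ii) of the hypothesis supplies a red out-edge from $u_i$ into $D_j$; I pick such an endpoint as $v_j$, and embed $P_j$ as $P(v_j)$. Iterating this procedure in BFS order from the root of the path-split produces an embedded copy of every $P_i$.

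To verify that this gives a red copy of $T$, I would check the two types of edges separately. The edges of $T$ internal to each subpath $P_i$ are red by property (i) of the hypothesis. The remaining edges of $T$ are the edges of the path-split itself, and by property~(\ref{itm:path-split-in-out-edge}) of \Cref{lem:path-split} each such edge goes from the end-vertex of some $P_i$ to the start-vertex of a child $P_j$; these are precisely the red edges $u_i v_j$ selected during the induction. Vertex-disjointness is automatic, since the embeddings $P(v_i)$ are contained in the pairwise disjoint sets $V_i$.

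I do not anticipate any real obstacle: the candidate-set machinery set up before the claim does all of the non-trivial work, and the argument above is essentially a bookkeeping exercise. The only subtle point is that when $|P_i|=1$ the start- and end-vertex coincide, so $u_i=v_i$ must send red edges into $D_j$ for every child $j$ simultaneously --- but property (ii) is stated for every child $j$, so this is guaranteed.
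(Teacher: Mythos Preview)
Your proposal is correct and is exactly the greedy top-down embedding the paper has in mind; in fact the paper's own proof is a single line (``Use a greedy embedding, analogous to the one used in the proof of Claim~\ref{claim:lift-tree}''), and what you wrote spells out precisely that analogy. Your observation about the $|P_i|=1$ case is the only place any care is needed, and you handle it correctly.
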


                \begin{proof}
                    Use a greedy embedding, analogous to the one used in the proof of Claim \ref{claim:lift-tree}.
                \end{proof}
                We now define such candidate sets, each of size at least $k/8$. We start with the leaves of the path-split and then move upwards, in such a way that we are always defining the candidate set for the vertex whose all out-neighbours have already had their candidate sets defined.

                If $i$ is a leaf of the path-split, then $P_i$ is a leaf of $T$, and we can set $D_i:= V_i$. 
                
                In the case of $i$ being a non-leaf we apply Claim \ref{claim:red-blue-sets} with $\ell = |P_i|$ and define $E_i,S_i \subseteq V_i$ to be the sets of end and start-vertices of a red path of length $|P_i|,$ such that $|E_i|=|S_i| \ge k/4$ (note that if $P_i$ is a singleton, then we can take $E_i = S_i = V_i$). We distinguish two cases for each non-leaf $i$ in the path-split, depending on whether $i$ is a branching vertex of $T'$ or not.

                Suppose that $P_i$ corresponds to a non-branching vertex of $T'$. Then its end-vertex has out-degree exactly $1$ in $T$; denote this out-neighbour by $j$. Let $X$ be the subset of $E_i$, consisting of vertices that have at least one red out-neighbour in the candidate set $D_j$. We define the candidate set $D_i$ to be the set of vertices in $S_i$ that correspond to the vertices in $X$. In this case it remains to show that $|X|\ge k/8.$ Indeed, by pseudorandomness, all but at most $\sigma \log N $ vertices in $E_i$ send at least $\eps |D_j| \ge \eps k/8 > a m^{2\alpha}$ edges to $D_j$. Recall that by the choice of the ordering of the vertices, every vertex in $E_i$ has at most $a m^{2 \alpha}$ blue out-neighbours in $D_j$, hence all but at most $\sigma \log N$ vertices in $E_i$ have a red out-neighbour in $D_j$, i.e.\ $|X| \ge |E_i| - \sigma \log N \ge k/8$.

                Now suppose that $P_i$ is a branching vertex in the path-split, i.e.\ it corresponds to a branching vertex $v$ in $T$. The maximum out-degree of $T$ is bounded by the number of leaves, so $i$ has at most $m^{\alpha}$ out-neighbours in $T'$; denote them by $j_1, \ldots, j_h$ (so $h \le m^{\alpha}$). Let $D_i$ be the set of vertices in $V_i$ which have a red out-neighbour in each of the sets $D_{j_s}$ for $s \in [h]$. As before, all but at most $\sigma \log N$ vertices in $V_i$ have at least one red out-neighbour in $D_{j_s}$ for each $s$. Hence $|D_i| \ge |V_i| - h \cdot \sigma \log N \ge k/8$.

                We defined candidate sets required by Claim \ref{claim:lift-tree-v2}, so in the case when most of the sets $U_i$ are red, we find a red copy of $T$.

            \subsubsection*{Case 2b. most of the sets $U_i$ are blue}
                In this case we assume that at least $t/2$ of the $U_i$ are blue; let us now rename $t/2$ blue $U_i$'s as $V_1,V_2,\dots,V_{t/2}$, while preserving the ordering, and let $E_i, S_i \subseteq V_i$ be the sets of end and start-vertices of the (blue) paths given by Claim \ref{claim:red-blue-sets}; then $|E_i|=|S_i|\ge k/8$ for every $i$.

                Define an auxiliary complete directed graph $K$ on vertex set $[t/2]$, where vertex $i$ corresponds to $V_i$. We define the following 2-colouring of its edges. Every edge $ij$ with $i<j$ is coloured red. We colour an edge $ij$ with $i>j$ blue if for every choice of subsets $W_i \subseteq V_i$ and $W_j \subseteq V_j$ of size at least $k/16$, there is a blue edge from $W_i$ to $W_j$; otherwise, we colour the edge red.

                Let $M$ be a maximal red-red matching in $K$. We now distinguish two cases: $M$ covers at least $t/4$ of the vertices of $K$; or there is a blue directed path of order at least $t/4$ (we have seen in Case 1a that one of these possibilities occurs).

                \subsubsection*{There is a long blue path in $K$} \label{ssec:tree-vs-path}
                    In this case, we assume that there is a blue path $i_1 i_2 \ldots i_{t/4}$ in $K$. Let us denote by $X_j \subset E_{i_j}$ the set of vertices which are end-vertices of blue paths of order at least $j \cdot \ell$ in $\bigcup_{r\in [j]} V_{i_r}$, where $\ell :=  \frac{n}{m^{1-3\alpha}}.$

                    \begin{claim} \label{claim:lift-path-v2}
                        For every $j \in [t/4]$ we have $|X_j| \ge k/16$.
                    \end{claim}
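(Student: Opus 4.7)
The plan is to prove the claim by induction on $j$. The base case $j = 1$ is immediate: $X_1 = E_{i_1}$, which has size at least $k/8 \ge k/16$ by \Cref{claim:red-blue-sets}.

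For the inductive step, assume $|X_j| \ge k/16$ and aim to show $|X_{j+1}| \ge k/16$. The idea is to extend a blue path of order $j\ell$ ending at some $x \in X_j$ into a blue path of order $(j+1)\ell$ ending in $E_{i_{j+1}}$, by using a blue edge from $x$ into $V_{i_{j+1}}$ followed by one of the blue paths of order $\ell$ inside $V_{i_{j+1}}$ guaranteed by \Cref{claim:red-blue-sets}. Concretely, let $(s_1, e_1), \dots, (s_p, e_p)$ with $p \ge k/8$ be pairwise disjoint pairs of vertices in $V_{i_{j+1}}$ joined by a blue path of order $\ell$ inside $V_{i_{j+1}}$, where $s_r \in S_{i_{j+1}}$ and $e_r \in E_{i_{j+1}}$. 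Define $Z \subseteq \{s_1, \dots, s_p\}$ to be the set of $s_r$ that receive a blue edge from some vertex of $X_j$. Then for every $s_r \in Z$, concatenating a blue path of order $j\ell$ ending at an appropriate $x \in X_j$, the blue edge $xs_r$, and the blue path from $s_r$ to $e_r$ inside $V_{i_{j+1}}$, yields a blue path of order $(j+1)\ell$ ending at $e_r$; this is indeed a single path since $V_{i_1}, \dots, V_{i_{j+1}}$ are pairwise disjoint. Hence $|X_{j+1}| \ge |Z|$, and it suffices to establish $|Z| \ge k/16$.

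Suppose for contradiction $|Z| < k/16$. Then $W := \{s_1, \dots, s_p\} \setminus Z$ has size at least $k/8 - k/16 = k/16$, and by construction no blue edge goes from $X_j$ to $W$. However $X_j \subseteq V_{i_j}$ and $W \subseteq V_{i_{j+1}}$ both have size at least $k/16$, and the edge $i_j i_{j+1}$ is blue in $K$ (with $i_j > i_{j+1}$, since blue directed edges of $K$ were defined from larger to smaller indices and hence the path $i_1 \dots i_{t/4}$ has strictly decreasing indices). By the definition of blue edges in $K$, this forces the existence of a blue edge from $X_j$ to $W$, a contradiction.

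The one step to be careful about is the direction of edges: one must verify that a blue directed path in $K$ has decreasing indices, so that the blue $K$-edge $i_j i_{j+1}$ translates into blue $G$-edges oriented from $V_{i_j}$ to $V_{i_{j+1}}$, which is precisely the direction required to extend the path forward. Beyond this bookkeeping, the argument is a straightforward induction; vertex-disjointness of the extended path is free from the disjointness of the blocks $V_{i_r}$.
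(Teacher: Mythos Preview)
Your proof is correct and follows essentially the same approach as the paper's: both argue by induction, define the set of start-vertices in $S_{i_{j+1}}$ reachable by a blue edge from $X_j$ (your $Z$, the paper's $Y_{j+1}$), show $|X_{j+1}| \ge |Z|$ via the start/end-vertex correspondence, and then use the definition of a blue edge in $K$ to conclude that the complement of $Z$ in $S_{i_{j+1}}$ has size less than $k/16$. Your explicit remark that a blue path in $K$ must have decreasing indices (so that the blue $K$-edge $i_j i_{j+1}$ indeed guarantees blue $G$-edges from $V_{i_j}$ to $V_{i_{j+1}}$) is a helpful clarification that the paper leaves implicit.
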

                    \begin{proof}
                        We prove this by induction. In the case $j=1$ every vertex in $E_{i_1}$ is an end-vertex of a path of order $\ell$ in $V_{i_1}$. So, let us assume that the statement is true for some $j\ge 1.$ Let $Y_{j+1}\subseteq S_{i_{j+1}}$ be the set of vertices that have a blue in-neighbour in $X_j$.
                
                        We now show that $|X_{j+1}| \ge |Y_{j+1}|$. Let $v\in Y_{j+1}\subseteq S_{i_{j+1}}$ and $u\in E_{i_{j+1}}$ be its corresponding end-vertex of a path $Q$ of order $\ell$. Since $v \in Y_{j+1}$, there exists a vertex $w$ in $X_j$ such that the edge $wv$ is blue in $G$. By definition of $X_j,$ $w$ is the end-vertex of a path $P$ of order at least $j\cdot \ell$. Then $PwvQ$ is a path of order at least $(j+1)\cdot \ell$ in $\bigcup_{r \in [j+1]} V_{i_r}$, hence $u\in X_{j+1}$. This shows that $|X_{j+1}| \ge |Y_{j+1}|$.

                        As there are no blue edges between $X_j$ and $S_{i_{j+1}}\setminus Y_{j+1}$ and since $|X_j|\ge k/16$ (by induction) we have $|S_{i_{j+1}}\setminus Y_{j+1}|< k/16,$ by the definition of the auxiliary graph $K$. This implies that $|Y_{j+1}| > k/8-k/16 \ge k/16,$ so $ |X_{j+1}| \ge|Y_{j+1}| \ge k/16$, as required.
                    \end{proof}

                    By applying Claim \ref{claim:lift-path-v2} with $j = t/4$, we find a blue path of order $t/4 \cdot \frac{n}{m^{1-3\alpha}}$ in $G$. By inequality (\ref{itm:2.1}) this blue path has order at least $n$, as desired.

                \subsubsection*{There is a large red-red matching in $K.$}

                Now we consider the case where there is a red-red matching $M,$ which covers $t/4$ vertices of $K$. Let us denote the edges of $M$ by $(V_{i_1},V_{j_1}),(V_{i_2},V_{j_2}),\dots,(V_{i_{t/8}},V_{j_{t/8}})$, where $i_s < j_s$ for every $s \in [t/8]$ and $i_1 < \ldots < i_{t/8}$. By definition of the auxiliary graph $K$, there are subsets $A_s \subseteq V_{i_s}$ and $B_s \subseteq V_{j_s}$ of size $k/16$ each, such that all edges from $B_s$ to $A_s$ are red; fix such subsets. The vertices $i_1, \ldots, i_{t/8}$ form a red transitive tournament that respects this ordering, i.e.\ $i_s i_r$ is an edge if $s < r$. By Observation \ref{obs:transitive}, we may find within this tournament a copy of a $(3,\alpha)$-tree-split $T'$ of $T$, since $T'$  is an out-directed tree of size smaller than $ 2m^{1-3\alpha} \le t/8$ (by inequality (\ref{itm:2.2})).
                
                It remains to find appropriate candidate sets $D_i$ for each vertex $i$ from the tree-split so that we can find a red copy of $T$ in $G,$ by Claim \ref{claim:lift-tree}. We will construct $D_i$ such that they have size at least $k/32$. As before, we start with leaf-trees and work our way up the tree, in such a way that when we are about to define a candidate set $D_i$, the candidate sets of subtrees corresponding to out-neighbours of the vertex $i$ in the tree-split are already defined. 
            
			    Let $j_1, \ldots, j_h$ be the out-neighbours in the tree-split of a vertex $i$. We assume that $D_{j_s} \subseteq A_{j_s}$ has been defined and has size at least $k/32$. Note that $h \le m^{\alpha}$ due to the bound on the number of leaves of $T$, and possibly $h = 0$ if $i$ corresponds to a leaf-tree. Let $X_s$ be the set of vertices in $A_i$ which have at least one red out-neighbour in $D_{i_s}$, for $s \in [h]$. Let $Y$ be the set of vertices in $B_i$ which have at least $|T_i| + \sigma \log N$ red out-neighbours in $X_s$ for every $s \in [h]$; if $h = 0$ we define $Y$ to be the set of vertices in $B_i$ that have at least $|T_i| + \sigma \log N$ red out-neighbours in $A_i$. Finally, let $D_i$ be the set of vertices in $A_i$ that have at least $|T_i|$ red out-neighbours in $Y$.
			
			    \begin{claim}
				    $|D_i| \ge |A_i| - \sigma \log N$.
			    \end{claim}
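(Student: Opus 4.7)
My plan is to mirror the three-stage structure used in the proof of Claim \ref{claim:size-D-i} from Case~1, producing lower bounds first on each $|X_s|$, then on $|Y|$, and finally on $|D_i|$. The key new feature, compared with Case~1, is that the red bipartite structure between $A_i$ and $B_i$ is one-sided: only edges directed from $B_i$ to $A_i$ are guaranteed to be red, while edges in the ``forward'' direction of the ordering (from $A_i$ into $B_i$) must be controlled via Claim \ref{claim:ordering}.

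For the first stage I would show $|X_s|\ge |A_i|-\sigma\log N$ for each $s\in[h]$. Since the tree-split is embedded in the transitive tournament on $i_1<\dots<i_{t/8}$ in an order-preserving way, $D_{j_s}$ lies in a $V$-set with larger index than the one containing $A_i$; hence by Claim \ref{claim:ordering} every vertex of $A_i$ has at most $am^{2\alpha}$ blue out-neighbours in $D_{j_s}$. A vertex of $A_i\setminus X_s$ sends no red edge into $D_{j_s}$, so it sends at most $am^{2\alpha}$ edges of any colour. Pseudorandomness applied to $A_i\setminus X_s$ and $D_{j_s}$ (each of size at least $\sigma\log N$, using $|D_{j_s}|\ge k/32$), compared with inequality (\ref{itm:2.2}), forces $|A_i\setminus X_s|<\sigma\log N$.

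The second stage bounds $|Y|$, and here the asymmetry works in our favour: all $B_i\to A_i$ (and hence all $B_i\to X_s$) edges are red, so the argument is essentially identical to the corresponding step in the proof of Claim \ref{claim:size-D-i}. For each $s$, pseudorandomness on $B_i$ and $X_s$, combined with $|X_s|\ge k/32$ from the first stage and $|T_i|\le 6m^{3\alpha}$, shows that at most $\sigma\log N$ vertices of $B_i$ fail to have $|T_i|+\sigma\log N$ red out-neighbours in $X_s$. Summing over the at most $h\le m^\alpha$ choices of $s$ gives $|Y|\ge |B_i|-m^\alpha\sigma\log N\ge k/32$ by inequality (\ref{itm:2.2}); the case $h=0$ is identical with $A_i$ replacing $X_s$.

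The main obstacle is the third stage, $|D_i|\ge |A_i|-\sigma\log N$, precisely because the edges from $A_i$ into $Y\subseteq B_i$ are forward edges and need not be red. The plan is again to invoke Claim \ref{claim:ordering}, which bounds by $am^{2\alpha}$ the blue out-degree of any vertex of $A_i$ into $B_i$: a vertex of $A_i$ with at least $|T_i|+am^{2\alpha}$ out-neighbours (of any colour) in $Y$ automatically has $|T_i|$ red ones and lies in $D_i$. Letting $A''$ be the set of vertices in $A_i$ failing this, pseudorandomness on $A''$ and $Y$ yields $e(A'',Y)\ge \eps|A''||Y|$, while by construction $e(A'',Y)<|A''|(|T_i|+am^{2\alpha})$. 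Together with $|Y|\ge k/32$, $|T_i|\le 6m^{3\alpha}$, and inequality (\ref{itm:2.2}), this forces $|A''|<\sigma\log N$, yielding $|D_i|\ge |A_i|-\sigma\log N$.
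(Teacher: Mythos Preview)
Your proof is correct and follows essentially the same three-stage structure as the paper's own argument: bound $|X_s|$ using Claim~\ref{claim:ordering} and pseudorandomness, bound $|Y|$ using the all-red $B_i\to A_i$ edges, and bound $|D_i|$ by again combining Claim~\ref{claim:ordering} (for the forward $A_i\to B_i$ edges) with pseudorandomness. The only cosmetic differences are that the paper applies pseudorandomness in Stage~1 with target set $A_{j_s}$ rather than $D_{j_s}$, and in Stage~3 phrases the argument directly (``all but $\sigma\log N$ vertices have $\ge \eps|Y|$ out-neighbours in $Y$, at least $\eps|Y|-am^{2\alpha}\ge |T_i|$ of which are red'') rather than via your contrapositive with the auxiliary set $A''$; the underlying inequalities are identical.
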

			
			    \begin{proof}
				    Firstly, note that every vertex in $A_i$ has at most $a m^{2\alpha}$ blue out-neighbours in $A_{j_s}$ (as $j_s$ corresponds to a set that appears later in the ordering $V_1, \ldots, V_{t/8}$ than the set that contains $A_i$). It follows from pseudorandomness that all but at most $\sigma \log N \le k/32$ vertices in $A_i$ have at least $\eps |A_{j_s}| >  a m^{2\alpha}$ out-neighbours in $A_{j_s}$, at least one of which is red. In particular, $|X_s| \ge k/32$.
				 
				    Secondly, again by pseudorandomness and by the fact that all edges from $B_i$ to $A_i$ are red, all but at most $\sigma \log N$ vertices in $B_i$ have at least $\eps |X_s| \ge |T_i| + \sigma \log N$ red neighbours in $X_s$. It follows that $|Y| \ge |B_i| - h \cdot \sigma \log N \ge k/32$. If $h = 0$ then, similarly, $|Y| \ge k/32$.
				
				    Finally, recall that the vertices in $A_i$ have at most $a m^{2\alpha}$ blue out-neighbours in $B_i$. Hence, by pseudorandomness, all but at most $\sigma \log N$ vertices in $B_i$ have at least $\eps |Y|$ out-neighbours in $Y$, at least $\eps |Y| - a m^{2 \alpha} \ge |T_i|$ of which are red. It follows that $|D_i| \ge |A_i| - \sigma \log N$, as required. 	
			    \end{proof}

                By Claim \ref{claim:lift-tree} we may find a red copy of $T$ in $G$. This completes the proof of Theorem \ref{thm:bounded-tree-vs-path}.
            \end{proof}

    \subsection{General tree} \label{ssec:general-tree-vs-path}
        We are now ready to prove Theorem \ref{thm:tree-vs-path}, without the constraint on the number of leaves. Our proof strategy is to consider the $m^{1/6}$-core-split $F_1,\dots,F_\ell$ of a tree on $m$ vertices. Then each tree in the split has at most $m^{1/6}$ leaves, so we can use the intermediate result, Theorem \ref{thm:bounded-tree-vs-path}, to find it in the right neighbourhood.

        \begin{proof}[ of Theorem \ref{thm:tree-vs-path}]
            Without loss of generality, we assume that $T$ is out-directed, as otherwise we can look at in-neighbourhoods instead of out-neighbourhoods in $G$. Suppose that $G$, together with a fixed $2$-colouring, has no blue copy of $\dirpath{n}$. Let $c_1$ be the constant from Theorem \ref{thm:bounded-tree-vs-path} for parameters $\eps$ and $2\sigma$. Define $\delta = \frac{\eps^2}{32 \cdot 6}$. We assume that $c \ge \max(2c_1\delta^{-1}, 4\delta^{-2})$.
        
            \begin{claim} \label{claim:core}
                Let $U \subseteq V(G)$ be a set of size at least $\delta N - m $ and let $T'$ be an out-directed tree on at most $m$ vertices with at most $m^{1/6}$ leaves. Then $G[U]$ contains a red copy of $T'$.
            \end{claim}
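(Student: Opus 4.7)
The claim essentially says that the conclusion of Theorem \ref{thm:bounded-tree-vs-path} transfers from $G$ to any sufficiently large induced subtournament $G[U]$, once we use the standing hypothesis that $G$ has no blue $\dirpath{n}$. The plan is therefore simply to apply Theorem \ref{thm:bounded-tree-vs-path} to $G[U]$ with pseudorandomness parameters $\eps$ and $2\sigma$ (these being the parameters for which the constant $c_1$ was fixed), and then deduce a red copy of $T'$ from the absence of a blue $\dirpath{n}$ inside $G[U]$ (which is automatic because $G[U] \subseteq G$).

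The proof thus reduces to three routine verifications, which is precisely what the specific choice of $c$ in the preamble is designed to enable. First, $|U|$ is not much smaller than $N$: since $m \le N/c$ and $c \ge 2\delta^{-1}$, one has $|U| \ge \delta N - m \ge \delta N / 2$, and in particular $|U| \ge \sqrt{N}$ once $N \ge 4\delta^{-2}$, which is guaranteed by the assumption $c \ge 4\delta^{-2}$ (together with $N \ge c$, which we may assume as in the start of the proof of Theorem \ref{thm:bounded-tree-vs-path}). Second, $G[U]$ inherits pseudorandomness with the slightly weaker parameter $2\sigma$: for any disjoint $A, B \subseteq U$ of size at least $2\sigma \log |U|$, the bound $|U| \ge \sqrt{N}$ yields $2\sigma \log |U| \ge \sigma \log N$, so the $(\eps, \sigma \log N)$-pseudorandomness of $G$ gives $e(A,B) \ge \eps|A||B|$; thus $G[U]$ is $(\eps, 2\sigma \log |U|)$-pseudorandom.

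Third, the size hypotheses of Theorem \ref{thm:bounded-tree-vs-path} for $G[U]$ hold, namely $n, |T'| \le |U|/c_1$ and $n \cdot |T'| \le |U|^2/(c_1^2 \log |U|)$. Indeed, from $n, m \le N/c$ and $|U| \ge \delta N/2$ we get $n, |T'| \le N/c \le \delta N/(2 c_1) \le |U|/c_1$, using $c \ge 2 c_1 \delta^{-1}$; and similarly $n \cdot |T'| \le N^2/(c^2 \log N) \le \delta^2 N^2/(4 c_1^2 \log N) \le |U|^2/(c_1^2 \log |U|)$.

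With these three items in place, Theorem \ref{thm:bounded-tree-vs-path} applied to $G[U]$ yields $G[U] \to (\dirpath{n}, T')$. The inherited 2-colouring on $G[U]$ has no blue $\dirpath{n}$ (otherwise $G$ would contain one, contradicting our standing assumption), so it must contain a red copy of $T'$, as required. There is no real conceptual obstacle here: the only thing to be careful about is propagating the pseudorandomness and size conditions through the restriction $U \subseteq V(G)$, and this is the sole reason for the slightly peculiar constants $\delta$, $c_1$, and the lower bound on $c$ introduced at the start of the proof.
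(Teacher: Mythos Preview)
Your proof is correct and follows essentially the same approach as the paper's own proof: verify that $|U| \ge \delta N/2 \ge \sqrt{N}$ to propagate pseudorandomness with parameter $2\sigma$, check the size conditions $n,m \le |U|/c_1$ and $nm \le |U|^2/(c_1^2\log|U|)$ using $c \ge 2c_1/\delta$, and then apply Theorem~\ref{thm:bounded-tree-vs-path} to $G[U]$ together with the standing assumption that there is no blue $\dirpath{n}$. The only cosmetic difference is that the paper writes $M := |U|$ and phrases the bounds in terms of $M$.
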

            \begin{proof}
                Firstly, we claim that $G[U]$ is $(\eps, 2\sigma \log M)$-pseudorandom, where $M = |U|$. Indeed, 
                note that 
                $$
                    M \ge \delta N - m \ge (\delta - 1/c) N \ge \frac{\delta}{2} N \ge \frac{\delta\sqrt{c}}{2} \sqrt{N} \ge \sqrt{N},
                $$
                using $N \ge c$ and $c \ge \frac{4}{\delta^2}$. In particular, $2 \sigma \log M \ge \sigma \log N$, so $G[U]$ is $(\eps, 2\sigma \log M)$-pseudorandom, using  $(\eps,\sigma \log N)$-pseudorandomness of $G$. Next, note that 
                    $$n, m \le \frac{N}{c} \le \frac{2}{\delta c}M \le \frac{M}{c_1},$$ 
                and  
                    $$nm \le \frac{N^2}{c^2 \log N} \le \frac{4}{c^2\delta^2} \frac{M^2}{\log M} \le \frac{M^2}{c_1^2 \log M},$$
                as $c \ge \frac{2c_1}{\delta}$.
                Hence, by Theorem \ref{thm:bounded-tree-vs-path}, $U$ contains either a red $T'$ or a blue $\dirpath{n}$; by assumption it follows that $U$ contains a red $T'$, as required.
            \end{proof}
                
            Let $F_1,\dots, F_\ell$ be the $m^{1/6}$-core-split of $T$. By Proposition \ref{prop:core}, $\ell \le 6$ and each tree in a forest $F_i$ has at most $m^{1/6}$ leaves.
            
            Define $U_0 = V(G)$, and for $i \le 5$ let $U_i$ be the set of vertices in $V(G)$ that have at least $\delta N$ red out-neighbours in $U_{i-1}$.
            
            \begin{claim} \label{claim:size-U-i}
                $|U_i| \ge N/6$ for $i \le 5$.
            \end{claim}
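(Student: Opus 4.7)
The plan is to prove the stronger inductive statement $|U_i| \ge (1 - i/6)N$ for every $0 \le i \le 5$, which immediately yields the claim since $(6-i)/6 \ge 1/6$ on that range. The base case $i = 0$ is immediate from $U_0 = V(G)$.

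For the inductive step I suppose, for contradiction, that $|U_i| < (1 - i/6)N$ while assuming $|U_{i-1}| \ge (7-i)N/6$. Set $W := V(G) \setminus U_i$ and $U := W \cap U_{i-1}$. Then $|W| > iN/6$, so inclusion--exclusion yields
\[
|U| \;\ge\; |W| + |U_{i-1}| - N \;>\; \frac{iN}{6} + \frac{(7-i)N}{6} - N \;=\; \frac{N}{6}.
\]
Every vertex of $U$ lies in $W$, so has fewer than $\delta N$ red out-neighbours in $U_{i-1} \supseteq U$; summing over $U$ gives $e_r(U, U) < |U|\delta N$. The calibration $\delta = \frac{\eps^2}{32 \cdot 6}$ is tuned precisely so that $|U| > N/6$ forces $e_r(U,U) \le \frac{\eps^2}{32}|U|^2$. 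Now I apply Lemma~\ref{lem:tree-by-number-of-edges} with the roles of red and blue swapped (its proof is entirely symmetric in the two colours); choosing $c$ large enough that $\frac{\eps}{4}|U| \ge \sigma \log N$ and $\frac{\eps}{4}|U| > \eps N/24 \ge n$, the swapped lemma produces a blue tree in $G$ on at least $n$ vertices, in particular a blue $\dirpath{n}$. This contradicts our standing assumption that $G$ has no blue $\dirpath{n}$ and completes the induction.

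The only delicate point is the strengthening of the inductive hypothesis: the naive bound $|U_{i-1}| \ge N/6$ yields only $|W \cap U_{i-1}| \ge 0$ from inclusion--exclusion, which is too weak to feed into Lemma~\ref{lem:tree-by-number-of-edges}. The linearly decreasing hypothesis $(1-i/6)N$ is exactly what is needed, and it naturally explains the restriction $i \le 5$ in the statement: at $i = 6$ the bound would degenerate to $0$.
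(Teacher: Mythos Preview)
Your proof is correct and essentially identical to the paper's: your set $U = (V(G)\setminus U_i)\cap U_{i-1}$ is precisely $U_{i-1}\setminus U_i$, which is the set the paper calls $W$, and your inclusion--exclusion bound $|U| \ge |W|+|U_{i-1}|-N$ is just a rephrasing of the paper's direct estimate $|U_{i-1}\setminus U_i| \ge |U_{i-1}|-|U_i| > N/6$. The remainder --- bounding the red edges, invoking Lemma~\ref{lem:tree-by-number-of-edges} with colours swapped, and deriving a blue $\dirpath{n}$ --- matches the paper exactly.
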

            
            \begin{proof}
                We prove by induction on $i$ that $|U_i| \ge (1 - i/6)N$. This holds trivially for $i = 0$, as $U_0 = V(G)$. Now let $1 \le i \le 5$, and suppose that the statement holds for $i-1$. Consider the set $W := U_{i-1} \setminus U_i$. Suppose that $|U_i| < (1 - i/6)N$, then by induction $|W| \ge N/6$. Also, by the definition of $U_i$, the number of red edges in $W$ is at most $|W|\cdot \delta N \le \frac{\eps^2}{32}|W|^2$ (recall that $\delta = \frac{\eps^2}{32\cdot 6})$. It follows from Lemma \ref{lem:tree-by-number-of-edges} that $W$ contains a blue $\dirpath{n}$, as $\frac{\eps}{4}|W| \ge \max(\sigma \log N, n)$, a contradiction. Hence, $|U_i| \ge (1 - i/6)N \ge N/6$, as required.
            \end{proof}
            
            We now show how to find a red copy of $T$. We first find a red copy of $F_1$ in $U_5$; this is possible due to Claim \ref{claim:core} and the fact that $F_1$ is an out-directed tree on at most $m$ vertices with at most $m^{1/6}$ leaves. Suppose that we found a red copy of $T \setminus (V(F_\ell) \cup \ldots \cup V(F_i))$ for some $2 \le i \le \ell$, such that the vertices corresponding to $F_{i-1}$ are in $U_{7-i}$. We embed the trees in $F_i$ one by one. Let $T'$ be one such tree, and let $u$ be the vertex in $U_{7-i}$ that corresponds to the parent of $T'$ in $T$. Let $W$ be the set of red out-neighbours of $u$ in $U_{6 - i}$ that are still available. By choice of $U_i$, $|W| \ge \delta N - m$, so by Claim \ref{claim:core} there is a red $T'$ in $W$. Continuing this way, we find a copy of $T \setminus (V(F_\ell) \cup \ldots \cup V(F_{i+1}))$ such that the vertices corresponding to $F_i$ are in $U_{6-i}$. Doing this until $\ell = 6$, we find a red copy of $T$. This completes the proof of Theorem \ref{thm:tree-vs-path}.
        \end{proof}

    \section{Tree vs.\ tree} \label{sec:tree-vs-tree}
        In this section we extend Theorem \ref{thm:tree-vs-path} to the case of two general (i.e.\ not necessary directed) trees. We start by proving it for a directed tree with few leaves vs.\ any directed tree (see Theorem \ref{thm:in-vs-out-bounded}); we then remove the assumption that one of the trees has few leaves (Theorem \ref{thm:in-vs-out}); and, finally, we also remove the assumption that the trees are directed (Theorem \ref{thm:tree-vs-tree}). We will often start by embedding a subtree $T'$ of a tree $T$, and then attempt to embed the trees in $T \setminus V(T')$ in the neighbourhood of a suitable vertex in $T'$.
        
        \subsection{Directed tree vs.\ directed tree with few leaves}
            
            Our first goal is to prove the following theorem.
            \begin{thm} \label{thm:in-vs-out-bounded}
                Given $0 < \eps < 1/2$ and $k,\sigma > 0$ there exists a constant $c>0$ such that the following holds. Let $G$ be a tournament on $N$ vertices which is $(\eps,\sigma \log N)$-pseudorandom, let $S$ be a directed tree on $n$ vertices, and let $T$ be a directed trees on $m$ vertices with at most $k$ leaves, where $m, n \le N/c$ and $nm \le \frac{N^2}{c^2\log N}$. Then $G \rightarrow (S, T)$.
            \end{thm}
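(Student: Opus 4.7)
The plan is to prove the statement by induction on $k$, the number of leaves of $T$, with constants $c_k$ depending on $k$, $\eps$, and $\sigma$.

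For the base case $k \le 2$, any directed tree with at most two leaves is a directed path. We apply Theorem \ref{thm:tree-vs-path} with the roles of the two colours swapped (treating $T$ as the directed path and $S$ as the directed tree of the theorem), whose size hypotheses are identical to ours; this yields $G \to (S, T)$ directly.

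For the inductive step, suppose $k \ge 3$ and the result holds for trees with at most $k - 1$ leaves with constant $c_{k-1}$. Without loss of generality assume $T$ is out-directed (the in-directed case is symmetric); fix a $2$-colouring of $G$ and assume there is no blue copy of $S$. Set $\delta = \eps^2/64$ and let $U := \{v \in V(G) : d_r^+(v) \ge \delta N\}$ be the reservoir of vertices with large red out-degree. If $|U| < N/2$ then $W := V(G) \setminus U$ satisfies $|W| > N/2$ and spans at most $|W| \delta N \le \eps^2 |W|^2/32$ red edges, so the colour-swapped version of Lemma \ref{lem:tree-by-number-of-edges} produces a blue copy of any tree of size up to $\eps|W|/4 \ge n$ (which holds once $c_k \ge 8/\eps$), in particular a blue copy of $S$ — a contradiction. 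Hence $|U| \ge N/2$, and $G[U]$ inherits $(\eps, 2\sigma \log|U|)$-pseudorandomness.

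Now choose any out-leaf $u$ of $T$, let $P$ be the maximal directed path of $T$ ending at $u$ whose internal vertices are non-branching, and let $v$ be the branching vertex of $T$ at which $P$ attaches (it exists as $k \ge 3$). Set $T_1 := T \setminus V(P)$, an out-directed tree on $m - \ell$ vertices (where $\ell = |P|$) with exactly $k - 1$ leaves, since $\deg_{T_1}(v) \ge \deg_T(v) - 1 \ge 2$. We apply the inductive hypothesis to $G[U]$ with trees $T_1$ and $S$; choosing $c_k$ sufficiently large relative to $c_{k-1}$, $\eps$, $\sigma$, the required size and pseudorandomness hypotheses hold. We obtain either a blue $S$ in $G[U] \subseteq G$ (done) or a red copy of $T_1$ in $G[U]$; denote by $v^*$ the image of $v$. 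Finally, let $R$ be the set of red out-neighbours of $v^*$ outside the embedded $T_1$: since $v^* \in U$, $|R| \ge \delta N - m \ge \delta N/2$ (for $c_k \ge 2/\delta$), and $G[R]$ is pseudorandom with suitable parameters. Apply Theorem \ref{thm:tree-vs-path} with colours swapped inside $G[R]$, looking for a red directed path $\dirpath{\ell}$ or a blue $S$: in the former case the red path $p_1^* \to \cdots \to p_\ell^*$ satisfies $p_1^* \in R$, so the edge $v^* \to p_1^*$ is red and attaching this path at $v^*$ completes a red copy of $T$; in the latter case we are done immediately. The main obstacle is the book-keeping of size, pseudorandomness, and constants across the recursion — but since $k$ is a fixed constant, the recursion has constant depth and $c_k$ grows by only a constant multiplicative factor at each level, so this is manageable.
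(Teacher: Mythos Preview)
Your proof is correct, and it takes a genuinely different (and arguably simpler) route from the paper's.

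The paper does not induct on $k$. Instead, it defines the \emph{disjoint paths layer} $\dpl(T)$, the union of all maximal leaf-paths, and observes that $T \setminus V(\dpl(T))$ has at most half as many non-root leaves. Iterating this produces a chain $T = T_0 \supset T_1 \supset \ldots \supset T_h$ with $h \le \log k + 1$ and $T_h$ a single vertex. Correspondingly, the paper sets up in advance a nested sequence $V(G) = U_0 \supset U_1 \supset \ldots \supset U_h$, where $U_i$ is the set of vertices with red out-degree at least $\delta N$ into $U_{i-1}$ (and $\delta$ is chosen proportional to $1/\log k$ so that Lemma~\ref{lem:tree-by-number-of-edges} guarantees $U_h \neq \emptyset$). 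It then embeds $T_h$ in $U_h$ and grows outward one layer at a time, attaching all paths of $\dpl(T_i)$ simultaneously using Theorem~\ref{thm:tree-vs-path}.

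Your argument peels off a single leaf-path, reducing the leaf count by exactly one, and recurses. Both proofs rest on the same two ingredients---the reservoir of vertices with large red out-degree (via Lemma~\ref{lem:tree-by-number-of-edges}) and Theorem~\ref{thm:tree-vs-path} to embed each stripped-off path---so the difference is purely organisational. Your induction is more elementary and avoids introducing the disjoint-paths-layer machinery; on the other hand, the recursion depth is $k$ rather than $\log k$, and the pseudorandomness parameter and the size of the host set degrade at each step, so your constant $c$ is exponential in $k$ while the paper's is polynomial. Since the theorem only asks for existence of \emph{some} $c = c(\eps,\sigma,k)$, this has no bearing on correctness. One bookkeeping point worth making explicit in your write-up: the inductive hypothesis at level $k-1$ must be invoked with parameter $2\sigma$ (since $G[U]$ is only $(\eps, 2\sigma\log|U|)$-pseudorandom), so the induction should be stated as ``for all $\sigma$'' at each $k$; you note this is manageable, and it is.
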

            
            Before turning to the proof, we give a definition. Let $T$ be an out-directed tree. The \emph{disjoint paths layer} of $T$, denoted $\dpl(T)$, is the collection of paths of $T$ that end at a non-root leaf $u$ and start one vertex after the last branching vertex, or root, between the root and $u$; in the case where $T$ consists of a single vertex (which is the root), we define instead $\dpl(T) = T$. In particular, the vertices in $L(T)$, except for the leaves of $T$, have degree exactly $2$ in $T$.
            
            \begin{figure}[ht]
                \caption{The disjoint paths layer of a tree.}
                \includegraphics{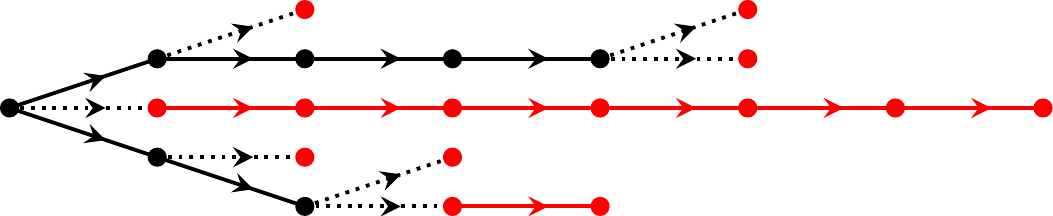}
                \label{fig:dpl}
            \end{figure}

            \begin{prop} \label{prop:path-layer-split}            
                The following properties hold for every out-directed tree $T$.
                \begin{enumerate}[(i)]
                    \item \label{itm:dpl-1}
                    $\dpl(T)$ is a union of pairwise vertex-disjoint directed paths of $T$,
                    \item \label{itm:dpl-2}
                    $T \setminus V(\dpl(T))$ is an out-directed tree,
                    \item \label{itm:dpl-3}
                    the number of non-root leaves in $T \setminus V(\dpl(T))$ is at most half the number of non-root leaves in $T$.
                \end{enumerate}
            \end{prop}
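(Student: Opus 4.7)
The plan is to characterize the set $T' := T \setminus V(\dpl(T))$ in a useful way, and then read off all three items from that characterization. Concretely, I will aim to prove that a non-root vertex $v$ of $T$ belongs to $T'$ if and only if its subtree in $T$ contains at least two leaves of $T$. This should follow directly from the definition of $\dpl(T)$: if the subtree rooted at $v$ contains only a single leaf $u$, then the unique path from $v$ to $u$ has no branching vertex, so the last branching vertex $b_u$ on the root-to-$u$ path lies strictly above $v$ and hence $v$ lies on the path $P_u \in \dpl(T)$; conversely, if $v$'s subtree contains two or more leaves, then some branching vertex lies weakly below $v$, so for every leaf $u$ descended from $v$ the vertex $b_u$ is a (weak) descendant of $v$, so $v \notin P_u$, and $v$ evades every other $P_{u'}$ trivially since $v$ is not even an ancestor of $u'$.

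From this characterization, item (\ref{itm:dpl-1}) is immediate: if some $v$ belonged to both $P_{u_1}$ and $P_{u_2}$ with $u_1 \neq u_2$, then $v$ would be a common ancestor of two distinct leaves and its subtree would contain at least two leaves, so $v$ would lie in $T'$, contradicting $v \in \dpl(T)$. Item (\ref{itm:dpl-2}) is also quick: the property "has at least two leaf descendants in $T$" is preserved under taking ancestors (since ancestors' subtrees contain $v$'s subtree), and the root is always included, so $T'$ is closed upward and contains the root, hence induces a subtree of $T$ containing the root; as a subgraph of an out-directed tree rooted at the same vertex, $T'$ is itself out-directed.

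The real bookkeeping is in item (\ref{itm:dpl-3}), which I view as the main step. Let $v$ be any non-root leaf of $T'$. Since $v \in T'$, the subtree of $v$ in $T$ contains at least two leaves of $T$; since $v$ has no children in $T'$, every child $w$ of $v$ in $T$ has exactly one leaf descendant in $T$. In particular $v$ has at least two children in $T$, and the leaves descended from distinct children are distinct, so $v$ is the lowest common ancestor of at least two leaves of $T$. The last thing I need is that two distinct non-root leaves of $T'$ are incomparable in $T$: if one were an ancestor of the other, then the intermediate path in $T'$ would force the would-be leaf to have a child in $T'$. Therefore the subtrees (in $T$) of distinct non-root leaves of $T'$ are vertex-disjoint, and hence the two-or-more leaves of $T$ that each contributes are disjoint from those contributed by any other non-root leaf of $T'$. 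Summing yields $2 \cdot (\text{\# non-root leaves of } T') \le \text{\# non-root leaves of } T$, which is exactly (\ref{itm:dpl-3}). Other than setting up the equivalent characterization cleanly and being careful about the edge case where the root itself is a leaf (i.e.\ when $T$ is a single vertex, handled by the separate clause in the definition of $\dpl$), the argument is essentially routine.
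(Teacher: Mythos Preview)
Your proof is correct and follows essentially the same idea as the paper. The paper's own proof is a two-line sketch: it declares (i) and (ii) ``immediate from the definition'' and for (iii) simply notes that each non-root leaf of $T \setminus V(\dpl(T))$ sends at least two edges into paths of $\dpl(T)$. Your explicit characterization of $T'$ as the set of vertices whose subtree contains at least two leaves is a clean way to make all three items transparent, and your argument for (iii) is the same double-counting (each non-root leaf of $T'$ sits above at least two distinct leaves of $T$, with disjointness across different leaves of $T'$), just phrased in terms of leaf-descendants rather than outgoing edges to $\dpl(T)$.
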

            
            \begin{proof}
                The first two properties are immediate from the definition. Property (\ref{itm:dpl-3}) follows as each non-root leaf in $T \setminus V(\dpl(T))$ sends at least two edges to paths of $\dpl(T)$.
            \end{proof}

            \begin{proof}[ of Theorem \ref{thm:in-vs-out-bounded}]
                Without loss of generality, suppose that $T$ is out-directed. We assume that $G$ has no blue $S$. 
                Let $c_1$ be the constant from Theorem \ref{thm:tree-vs-path} with parameters $\eps$ and $2\sigma$, set $\delta:= \frac{\eps^2}{32(\log k+2)}$, and pick $c$ such that $c \ge \max\{2c_1 / \delta, 4(\log k + 2)/\eps\}$. We use the following claim.
                
                \begin{claim} \label{claim:path-in-U}
                    Let $U$ be a set of at least $\delta N - m$ vertices. Then $U$ contains a red $\dirpath{m}$.
                \end{claim}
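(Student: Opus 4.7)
The plan is to mirror the argument of \Cref{claim:core} from the previous section: apply \Cref{thm:tree-vs-path} directly to the induced subtournament $G[U]$, with the roles of red and blue swapped, taking the ``blue directed path'' in the theorem to be $\dirpath{m}$ and the ``red directed tree'' to be $S$. Since we are operating under the standing assumption that $G$ contains no blue copy of $S$, the theorem will then hand us the desired red $\dirpath{m}$ inside $U$.

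First I would set $M := |U|$ and verify that $M$ is comparable to $N$. Using $m \le N/c$ together with $c \ge 2c_1/\delta$ (and $c_1 \ge 1$), we get $M \ge \delta N - m \ge \delta N/2$, exactly as in the proof of \Cref{claim:core}. After perhaps enlarging $c$ to absorb a $4/\delta^2$ threshold (this was implicit in the analogous step for \Cref{claim:core}), we also obtain $M \ge \sqrt{N}$, so that $2\sigma \log M \ge \sigma \log N$. Consequently $G[U]$ inherits from $G$ the property of being $(\eps, 2\sigma \log M)$-pseudorandom. The size hypotheses needed to apply \Cref{thm:tree-vs-path} with parameters $(\eps, 2\sigma)$ and associated constant $c_1$, namely $m, n \le M/c_1$ and $mn \le M^2/(c_1^2 \log M)$, then follow from $n, m \le N/c$ and $nm \le N^2/(c^2 \log N)$ using $c \ge 2c_1/\delta$.

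With these hypotheses in place, \Cref{thm:tree-vs-path} applied in swapped colours to $G[U]$ produces either a red $\dirpath{m}$ or a blue copy of $S$ inside $U$. The latter is ruled out by assumption, so $U$ must contain a red $\dirpath{m}$, as claimed.

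The only genuinely delicate point is the bookkeeping on $c$: we need to simultaneously enforce $M \ge \delta N/2$, $M \ge \sqrt{N}$ (for the pseudorandomness transfer), and the two quantitative conditions of \Cref{thm:tree-vs-path}, all from the single inequality $c \ge \max\{2c_1/\delta,\, 4(\log k+2)/\eps\}$. This is purely arithmetic, and if any one threshold is missed we simply enlarge $c$ by an absolute factor; no new ideas are needed beyond those already used for \Cref{claim:core}.
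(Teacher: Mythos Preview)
Your proposal is correct and follows essentially the same approach as the paper's own proof: both verify $M \ge \delta N/2 \ge \sqrt{N}$ to transfer pseudorandomness to $G[U]$, check the size conditions $n,m \le M/c_1$ and $nm \le M^2/(c_1^2 \log M)$, apply \Cref{thm:tree-vs-path} with colours swapped, and rule out the blue $S$ alternative. Your remark about possibly enlarging $c$ to secure $M \ge \sqrt{N}$ is in fact a point the paper leaves implicit.
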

                
                \begin{proof}
                    Let $M :=|U|\ge \delta N - m$. Then, using $c \ge 2c_1 / \delta \ge 2 / \delta$,
                    $$
                        M \ge \delta N - \frac{N}{c} \ge \frac{\delta}{2} \cdot N \ge \sqrt{N}.
                    $$
                    Since $G$, and thus $G[U]$, is $(\eps, \sigma \log N)$-pseudorandom, $G[U]$ is $(\eps, 2\sigma \log M)$-pseudorandom. Using $c \ge 2 c_1 / \delta$, we have $M \ge (\delta / 2) N \ge (c_1 / c) N$.
                    Thus, by the assumptions on $n$ and $m$,
                    $$
                        n,m \le \frac{N}{c} \le \frac{M}{c_1} \qquad \text{and} \qquad nm \le \frac{N^2}{c^2 \log N} \le \frac{M^2}{c_1^2 \log M}.
                    $$
                    Hence, by definition of $c_1$ (according to Theorem \ref{thm:tree-vs-path}), $U$ contains a red $\dirpath{m}$ or a blue $S$. Since we assumed that the latter does not hold, $U$ contains a red $\dirpath{m}$, as required.
                \end{proof}
                
                Our plan is to embed a red copy of $T$ layer by layer. To this end, define $T_0 := T$ and, for $i \ge 1$, $T_i := T_{i-1} \setminus V(\dpl(T))$, and let $h$ be the largest $i$ such that $T_i$ is non-empty. Note that $T_h$ is a singleton (as the root is not removed unless the root is the only vertex), and, by Proposition \ref{prop:path-layer-split} (\ref{itm:dpl-3}), $T_i$ has at most $k \cdot 2^{-i}$ non-root leaves; in particular, $h \le \log k + 1$.
                
                Define $U_0 := V(G)$, and for $1 \le i \le h$ let $U_i$ be the set of vertices in $U_{i-1}$ whose red out-degree in $U_{i-1}$ is at least $\delta N$. We shall need the following claim.

                \begin{claim}\label{claim:high-deg-embed}
                    $U_h \neq \emptyset$.
                \end{claim}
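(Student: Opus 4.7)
The plan is to imitate the proof of Claim \ref{claim:size-U-i}: argue by induction on $i \in \{0,1,\dots,h\}$ that
\[
    |U_i| \;\ge\; \Big(1 - \tfrac{i}{\log k + 2}\Big)\, N.
\]
The base case is trivial since $U_0 = V(G)$. The target bound $U_h \ne \emptyset$ then follows because $h \le \log k + 1$, which gives $|U_h| \ge N/(\log k + 2) > 0$.

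For the inductive step, suppose the bound holds at step $i-1$ but fails at step $i$, and set $W := U_{i-1} \setminus U_i$. Subtracting the two bounds yields $|W| > N/(\log k + 2)$. By definition of $U_i$, every vertex of $W$ has fewer than $\delta N$ red out-neighbours in $U_{i-1} \supseteq W$, so the number of red edges spanned by $W$ is at most $|W|\cdot \delta N$. Plugging in $\delta = \frac{\eps^2}{32(\log k + 2)}$ and $|W| \ge N/(\log k + 2)$, this is at most $\frac{\eps^2}{32}|W|^2$.

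I will then apply the colour-swapped version of Lemma \ref{lem:tree-by-number-of-edges} to $W$: since $G[W]$ has few red (rather than blue) edges, the lemma produces a blue copy of any tree of size $\frac{\eps}{4}|W|$. Using the hypothesis $c \ge 4(\log k + 2)/\eps$ and $n \le N/c$ we get $\frac{\eps}{4}|W| \ge \frac{\eps N}{4(\log k + 2)} \ge n$, so in particular $G$ contains a blue copy of $S$, contradicting the standing assumption that no blue $S$ exists.

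The only subtlety is verifying the auxiliary hypothesis $\frac{\eps}{4}|W| \ge \sigma \log N$ of Lemma \ref{lem:tree-by-number-of-edges}, needed so that pseudorandomness of $G$ descends to the relevant bipartite subgraphs inside $W$. This follows from $|W| \ge N/(\log k + 2)$ together with choosing $c$ sufficiently large compared to $\eps^{-1}$, $\sigma$, and $k$; no extra structural argument is required beyond the iterative degree accounting sketched above.
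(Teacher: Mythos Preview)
Your proposal is correct and follows essentially the same approach as the paper: the paper also proves by induction that $|U_i| \ge (1 - \tfrac{i}{h+1})N$ (you use $\log k + 2$ in place of $h+1$, which is harmless since $h+1 \le \log k + 2$), bounds the red edges in $W = U_{i-1}\setminus U_i$ via the definition of $\delta$, and applies the colour-swapped Lemma~\ref{lem:tree-by-number-of-edges} to force a blue copy of the forbidden tree. Your identification of the contradiction (a blue $S$, using $n \le N/c$) is in fact slightly cleaner than the paper's wording.
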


                \begin{proof}
                    The proof is essentially identical to that of Claim \ref{claim:size-U-i}.
                    We prove by induction that $|U_i| \ge (1 - \frac{i}{h+1})N$ for $0 \le i \le h$. This is trivial for $i = 0$, as $U_0 = V(G)$. Let $0 < i \le h$, and suppose that the statement holds for $i-1$, i.e.\ $|U_{i-1}| \ge (1 - \frac{i-1}{h+1}) N$. Set $W := U_{i-1} \setminus U_i$. Suppose that $|U_i| < (1 - \frac{i}{h+1})N$; so $|W| \ge N / (h+1)$. We now wish to apply Lemma \ref{lem:tree-by-number-of-edges}. To do so, note that, by definition of $U_i$, the number of red edges in $W$ is at most $|W| \cdot \delta N = |W| \cdot \frac{\eps^2}{32 (\log k + 2)} \cdot N ֿ\le \frac{\eps^2}{32}|W| \cdot \frac{N}{h+1} \le  \frac{\eps^2}{32}|W|^2$, using the defintion of $\delta$ and the bounds $h \le \log k + 1$ and $|W| \ge N / (h+1)$. We also have $\frac{\eps}{4} |W| \ge \sigma \log N$ (since $N\ge c$ and we take $c$ large enough, in terms of $\sigma,\eps,k$). Thus, by Lemma \ref{lem:tree-by-number-of-edges}, $G$ contains any blue tree on at most $\frac{\eps}{4}|W|$ vertices. Since $m \le N / c \le \frac{\eps}{4} \cdot N / (h+1) \le \frac{\eps}{4} |W|$ (using $c \ge 4 (\log k + 2) / \eps \ge 4 (h+1) / \eps$), it follows that $G$ contains a blue copy of $T$, a contradiction.
                \end{proof}
                We now show that there is a red copy of $T_i$ in $U_i$, by induction on $0 \le i \le h$. Since $T_h$ is a singleton and $U_h$ is non-empty, there is indeed a red copy of $T_h$ in $U_h$. Now suppose that for some $0 \le i < h$, there is a red copy of $T_{i+1}$ in $U_{i+1}$. Recall that $T_{i+1} = T_i \setminus V(\dpl(T_i))$, hence it suffices to show that the paths in $\dpl(T_i)$ can be embedded in the red out-neighbourhoods of the corresponding vertices in $T_{i+1}$. We embed the paths in $\dpl(T_i)$ one by one. Let $P$ be a path in $\dpl(T_i)$ of order $\ell$, let $v$ be its start-vertex and let $u$ be the vertex in $U_{i+1}$ that corresponds to the parent of $v$ in $T$. Denote by $W$ the red out-neighbours of $u$ in $U_i$ which are still available. Then, since $u$ is in $U_{i+1}$ and at most $m$ vertices are used, $|W| \ge \delta N - m$. By Claim \ref{claim:path-in-U}, $W$ contains a red $P$, as required. We are thus able to embed each of the paths in $\dpl(T_i)$ in $U_i$ so as to obtain a red copy of $T_i$ in $U_i$. In particular, by taking $i = 0$, we see that $G$ has a red copy of $T$, as required for the proof of Theorem \ref{thm:in-vs-out-bounded}.
            \end{proof}
            
        \subsection{Directed trees}

            With the next theorem we further generalise the result to the case of any directed trees $S$ and $T$. We once again obtain a reduction to the previous result, Theorem \ref{thm:in-vs-out-bounded}. This time we make use of $k$-cores, which we already encountered in the proof of Theorem \ref{thm:tree-vs-path} (see Definition \ref{def:core}).

            \begin{thm}\label{thm:in-vs-out}
                Given $0 < \eps < 1/2$ and $\sigma > 0$, there exists a constant $c>0$ such that the following holds. Let $G$ be a tournament on $N$ vertices which is $(\eps,\sigma \log N)$-pseudorandom. Then $G \rightarrow (S,T)$ for any directed trees $S$ and $T$ on $n$ and $m$ vertices, respectively, where $n, m \le N/c$ and $nm \le \frac{N^2}{c^2\log N}$.
            \end{thm}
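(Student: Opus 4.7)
The plan is to iterate the $k$-core reduction on both $T$ and $S$, shrinking each of their sizes by a factor $k$ per step while using \Cref{thm:in-vs-out-bounded} as the base of an induction on $m+n$.

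First, we may assume that both $T$ and $S$ are out-directed: if either is in-directed, reversing every edge of $G$ turns it into an out-directed tree, and pseudorandomness is preserved because in the reversed tournament $e(A,B)$ equals $e(B,A)$ in $G$, and \Cref{def:pseudo} is symmetric in $A$ and $B$. We induct on $m+n$. The base case, when $\min(m,n)$ is bounded by a constant, is handled by \Cref{thm:in-vs-out-bounded}, since a tree on constantly many vertices has constantly many leaves.

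For the inductive step, fix a large constant $k$ (to be chosen in terms of $\eps,\sigma$) and let $T'$ be the $k$-core of $T$, so by \Cref{obs:core} it has at most $k$ leaves and every tree of $T\setminus V(T')$ has order at most $m/k$. Applying \Cref{thm:in-vs-out-bounded} to $G$ with $(S,T')$ gives a red $T'$ or a blue $S$ in $G$; the blue case finishes. Otherwise, following the layered construction in the proof of \Cref{thm:in-vs-out-bounded}, we would embed $T'$ inside the set $U_1=\{v:d_r^+(v)\ge \delta N\}$, which has $|U_1|\ge N/2$ by \Cref{lem:tree-by-number-of-edges} applied to its complement. Then every vertex of $T'$ with children outside $T'$ has at least $\delta N$ red out-neighbours, so we attempt to extend the embedding by placing each tree of $T\setminus V(T')$ disjointly in the red out-neighbourhood of its parent's image. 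If this succeeds we obtain a red $T$; otherwise the extension fails at some subtree $T_1\subseteq T\setminus V(T')$ with $|T_1|\le m/k$, inside a sub-tournament $G_1\subseteq G$ of size at least $\delta N-m$.

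Since $G_1$ contains no red $T_1$, applying \Cref{thm:in-vs-out-bounded} to $G_1$ in the colour-swapped orientation with the pair $(T_1,S')$, where $S'$ is the $k$-core of $S$ (and so has at most $k$ leaves), forces a blue $S'$ to sit inside $G_1\subseteq G$. We now repeat the whole argument symmetrically on the blue side: embed this blue $S'$ inside the set of vertices with large blue out-degree and try to extend to a blue $S$; success finishes, while failure yields a subtree $S_1\subseteq S\setminus V(S')$ with $|S_1|\le n/k$ inside a sub-tournament $G_2$. Finally, the inductive hypothesis applied to the pair $(T_1,S_1)$, whose total size $|T_1|+|S_1|\le (m+n)/k$ is strictly smaller than $m+n$, inside a suitable sub-tournament contained in the relevant out-neighbourhoods, produces a red $T_1$ or blue $S_1$; we splice this tree into the partial red $T'$ or blue $S'$ embedding to complete a red $T$ or a blue $S$ in $G$.

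The hard part will be bookkeeping the sub-tournaments so that the red $T_1$ or blue $S_1$ produced by the inductive hypothesis is located in the correct out-neighbourhood to extend the partial red $T'$ or blue $S'$ embedding, and checking the pseudorandomness and size hypotheses for each invocation of \Cref{thm:in-vs-out-bounded} and of the inductive hypothesis. The key numerical fact is that each round shrinks the working sub-tournament by only a constant factor while dividing both $m$ and $n$ by $k$, so taking $k$ sufficiently large compared to $1/\delta$ and the constant from \Cref{thm:in-vs-out-bounded} is enough to preserve the conditions $n,m\le N/c$ and $nm\le N^2/(c^2\log N)$ throughout.
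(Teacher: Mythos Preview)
Your approach is essentially the paper's: alternate the $k$-core reduction on $T$ and $S$, nest sub-tournaments, and use \Cref{thm:in-vs-out-bounded} at each step. But your final step is wrong as written.

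You claim that the red $T_1$ or blue $S_1$ produced by the inductive hypothesis can be ``spliced into the partial embedding to complete a red $T$ or a blue $S$''. It cannot: the forest $T\setminus V(T')$ may have many components, and attaching only $T_1$ leaves the others still unembedded (likewise for $S$). The paper closes the argument by contradiction instead. One assumes the ambient set has no red $T$ and no blue $S$; then the red extension \emph{must} fail at some $T''$, producing a set $Y$ (your $G_1$) with no red $T''$; the entire blue phase---embedding the $k$-core $S'$, attempting the extension---is carried out \emph{inside $Y$}, so when the blue extension fails at $S''$ the resulting set $Z$ (your $G_2$) sits inside $Y$ and therefore has no red $T''$ either, in addition to having no blue $S''$. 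The inductive hypothesis now says $Z$ contains one of them, a contradiction. Nothing is ever spliced; the conclusion is that one of the earlier ``if this succeeds, done'' branches must already have fired.

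For this to go through it is essential that the blue $S'$ be embedded, and the blue extension attempted, inside $G_1$. Your phrase ``repeat the whole argument symmetrically on the blue side'' reads as restarting in all of $G$; if so, $G_2$ need not lie in $G_1$, you lose the fact that $G_2$ has no red $T_1$, and neither the contradiction nor any splicing works. You flag this as ``bookkeeping'', but it is the crux of the argument rather than a detail.
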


            \begin{proof}
                Our goal is to reduce the statement of this theorem to the case when one of the trees has a constant number of leaves. We iteratively make the trees $S$ and $T$ smaller, using Theorem \ref{thm:in-vs-out-bounded}, until one of them becomes a singleton. 
    
                Define $\delta = \eps^2 / 64$, $\ell :=  8/\delta^2$, $k := \ell^2$, let $c_1$ be the constant from Theorem \ref{thm:in-vs-out-bounded} with parameters $\eps$, $3\sigma$ and $k$, and let $c := \max\{c_1 \ell, 8 \ell / \eps\}$. Set $h := \ceil{\log_k N}$ for $0 \le i \le h$, and write $n_i := n \cdot k^{-i}$, $m_i := m \cdot k^{-i}$,  and $N_i := N \cdot \ell^{-i}$. We shall use the following proposition.
                \begin{prop} \label{prop:dir-tree-tree-easy-facts}
                    The following properties hold.
                    \begin{enumerate}[(i)]
                        \item \label{itm:fact-arrow}
                            Let $U$ be a set of at least $N_{i+1}$ vertices, let $S$ be a directed tree on $n_i$ vertices, and let $T$ be a directed tree on $m_i$ vertices with at most $k$ leaves. Then $U$ contains a blue $S$ or a red $T$.
                        \item \label{itm:fact-large-deg}
                            Let $U$ be a set of at least $N_{i+1}$ vertices. Then either it contains a blue copy of any tree on $n_i$ vertices, or the set of vertices in $U$ whose red out-degree in $U$ is at least $\delta |U|$ has size at least $|U|/2$.
                    \end{enumerate}
                \end{prop}
                
                \begin{proof}
                    Firstly, note that for every $0 \le i \le h$
                    \[
                        N_i 
                        \ge N \cdot \ell^{-\log_k N - 1} 
                        = N \cdot k^{-\frac{1}{2}\log_k N} \cdot \frac{1}{\ell}  
                        = \frac{\sqrt{N}}{\ell} \ge N^{1/3}.
                    \]
                    It follows that every subset $U \subseteq V(G)$ of size at least $N_i$, where $0 \le i \le h$, is $(\eps, 3\sigma \log |U|)$-pseudorandom.
                    
                    Note that $n_i = n k^{-i} \le n \ell^{-i} \le (N/c) \ell^{-i} = N_{i+1} \ell / c \le N_{i+1} / c_1$ (using $c \ge c_1 \ell$). Similarly, $m_i \le N_{i+1} / c_1$ and $n_i m_i \le N_{i+1}^2 / c_1^2 \log N_{i+1}$. Property (\ref{itm:fact-arrow}) thus follows from the definition of $c_1$ (via Theorem \ref{thm:in-vs-out-bounded}).
                    
                    Property (\ref{itm:fact-large-deg}) can be deduced from Lemma \ref{lem:tree-by-number-of-edges} as follows. Suppose that the set $X$ of vertices in $U$ whose red out-degree is smaller than $\delta |U|$ has size at least $|U|/2$. Then the number of red edges spanned by $X$ is at most $|X|\cdot \delta |U| \le \frac{\eps^2}{32}|X|^2$. Thus, by Lemma \ref{lem:tree-by-number-of-edges}, $G[X]$ contains a blue copy of any tree on at most $\frac{\eps}{4}|X| \ge n_i$ vertices, as required, where we used the inequalities $\frac{\eps}{4}|X| \ge \frac{\eps}{8}N_{i+1}\ge \frac{\eps c}{8 \ell} n_i \ge n_i$ (using $n_i \le N_{i+1} \ell / c$ and $c \ge 8 \ell / \eps$) and $\frac{\eps}{4}|X| \ge \frac{\eps}{8}N_{i+1} \ge \sigma \log N$. 
                \end{proof}
                
                We complete the proof with the following claim.
                
                \begin{claim} \label{claim:ind-tree-tree}
                    Let $U \subseteq V(G)$ be a set of size at least $N_i$, where $0 \le i \le h$, and let $S$ and $T$ be directed trees of order $n_i$ and $m_i$, respectively. Then $U$ contains a blue $S$ or a red $T$.
                \end{claim}
                
                \begin{proof}
                    We prove the claim by induction on $i$. Note that when $i = h$ the claim holds trivially as $n_h, m_h \le 1$ and $N_h \ge 1$. Now suppose that $0 \le i < h$ and the claim holds for $i+1$.
                    
                    Suppose that $U$ does not contain a blue $S$ or a red $T$. For convenience, we assume that $S$ and $T$ are out-directed; the remaining cases follow similarly. Let $S'$ and $T'$ be the $k$-cores of $S$ and $T$, respectively. Then $S'$ and $T'$ have at most $k$ leaves, $S \setminus V(S')$ is a forest of trees of order at most $n_{i+1}$, and $T \setminus V(T')$ is a forest of trees of order at most $m_{i+1}$. 
                    
                    Let $X$ be the set of vertices in $U$ whose red out-degree in $U$ is at least $\delta |U|$. Then, by Proposition \ref{prop:dir-tree-tree-easy-facts} (\ref{itm:fact-large-deg}) and the assumption that $U$ does not contain a blue $S$, we have $|X| \ge |U|/2 \ge N_i / 2 \ge N_{i+1}$.
                    By Proposition \ref{prop:dir-tree-tree-easy-facts} (\ref{itm:fact-arrow}) and the assumption that $U$ does not have a blue $S$, $X$ contains a red $T'$. We attempt to extend the copy of $T'$ to a red $T$ in $U$ by attaching, one at a time, copies of the trees in $T \setminus V(T')$. As $U$ does not have a red copy of $T$, at some point we fail. Let $T''$ be the tree in $T \setminus V(T')$ that we fail to embed (while $T'$ and some of $T \setminus V(T')$ is already embedded). Denote the root of $T''$ by $u$, and let $u'$ be the vertex in $X$ in which we embedded the parent of $u$ in $T$.
                    
                    Denote by $Y$ the set of red out-neighbours of $u'$ in $U$ which have not been used yet, so by the failure to embed $T''$, $Y$ does not have a red $T''$. Let $Y'$ be the set of vertices in $Y$ whose blue out-degree in $Y$ is at least $\delta |Y|$. Then, $|Y| \ge \delta|U| - m_i \ge \frac{\delta}{2}|U| \ge \frac{\delta}{4}N_i \ge N_{i+1}$ hence, by Proposition \ref{prop:dir-tree-tree-easy-facts} (\ref{itm:fact-large-deg}), with red and blue swapped, $|Y'| \ge |Y|/2 \ge \frac{\delta}{8}N_i \ge  N_{i+1}$. As $Y$, and thus $Y'$, does not contain a red $T''$, it follows from Proposition \ref{prop:dir-tree-tree-easy-facts} (\ref{itm:fact-arrow}) that $Y'$ contains a blue $S'$. Again, we try to extend this copy of $S'$ to a blue copy of $S$ in $Y'$, by attaching one tree of $S \setminus V(S')$ at a time. As there is no blue copy of $S$ in $Y'$, at some point we fail; denote by $S''$ the tree that we fail to embed. Let $v$ be the root of $S''$, and let $v'$ be the vertex in $Y'$ where we embedded the parent of $v$ in $S$.
                    
                    Let $Z$ be the set of blue out-neighbours of $v'$ in $Y'$ which are not used. Then $|Z| \ge \delta|Y| - n_i \ge \frac{\delta}{2}|Y| \ge \frac{\delta^2}{8}N_i = N_{i+1}$ and $Z$ does not have a red $T''$ or a blue $S''$, contrary to the induction hypothesis. It follows that $U$ contains a red $T$ or a blue $S$, as required.
                \end{proof}
    
                The proof of Theorem \ref{thm:in-vs-out} follows immediately from Claim \ref{claim:ind-tree-tree} by taking $i=0$.
            \end{proof}

        \subsection{General trees}
            Our final aim is to generalise Theorem \ref{thm:in-vs-out} to arbitrary oriented trees, as follows.
            \begin{thm} \label{thm:tree-vs-tree}
                Given $0 < \eps < 1/2$ and $\sigma > 0$, there exists a constant $c>0$ such that the following holds. Let $G$ be a tournament on $N$ vertices which is $(\eps,\sigma \log N)$-pseudorandom, and let $S$ and $T$ be trees of orders $n$ and $m$, respectively, where $m, n \le N/c$ and $nm \le \frac{N^2}{c^2 \log N}$. Then $G \rightarrow (S,T)$.
            \end{thm}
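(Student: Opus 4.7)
The plan is to reduce \Cref{thm:tree-vs-tree} to \Cref{thm:in-vs-out} in two successive stages, as outlined in \Cref{sec:overview}: first extend to the case where $T$ is directed and $S$ is a general tree, then extend to the fully general case. The central observation in both stages is that any general tree can be decomposed into a sequence $U_1, U_2, \ldots, U_r$ of maximal directed subtrees, where consecutive $U_i$ and $U_{i+1}$ meet at a single ``switch'' vertex where the edge orientation flips.

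For the first stage, given a directed tree $T$ and a general tree $S$ with decomposition $S = U_1 \cup \cdots \cup U_r$, I would exploit pairs of subsets $A, B \subseteq V(G)$ with the property that many vertices of $A$ have large blue out-degree into $B$ and many vertices of $B$ have large blue in-degree from $A$. Such subsets can be extracted from the pseudorandomness of $G$ via a dichotomy analogous to \Cref{prop:dir-tree-tree-easy-facts}(ii): either such $A, B$ exist, or else \Cref{lem:tree-by-number-of-edges} immediately produces a red copy of $T$ in some colour-deficient subset. Given $A$ and $B$, I would embed the in-directed pieces of $S$ in $A$ and the out-directed pieces in $B$ (or vice versa); each piece $U_i$ is a directed tree, which I can embed in blue inside a suitable subset of $A$ or $B$ via \Cref{thm:in-vs-out} (unless that result instead gives a red $T$, finishing the proof), and the switch vertex joining $U_i$ to $U_{i+1}$ is realised by the dense bipartite blue structure between $A$ and $B$.

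For the second stage, with both $T$ and $S$ general, the same decomposition trick is applied to $T$: decompose $T$ into alternating directed pieces and find analogous subsets $A', B'$ with the red-analogue bipartite property, placing in-directed pieces of $T$ in one side and out-directed pieces in the other. Each sub-application now appeals to the first-stage result instead of \Cref{thm:in-vs-out}.

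The main obstacle is controlling the quantitative losses through the two reductions. Each stage sheds a constant factor in the working subset sizes, and the minimum blue (respectively red) degrees required at the switch vertices impose further constraints. One must verify that with $n, m \le N/c$ and $nm \le N^2 / (c^2 \log N)$, the hypotheses of \Cref{thm:in-vs-out} (and later of the first-stage result) still hold in the recursive calls inside $A, B$ and $A', B'$; as in \Cref{claim:core} and \Cref{prop:dir-tree-tree-easy-facts}, this should come down to a careful choice of the constant $c$.
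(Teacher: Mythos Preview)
Your proposal is correct and matches the paper's approach: the paper also decomposes a general tree via an \emph{in-out split} into alternating in- and out-directed subtrees, obtains a mindegree pair $(A,B)$ in one colour via \Cref{lem:mindeg} (after the \Cref{lem:tree-by-number-of-edges} dichotomy), and embeds the alternating pieces in $A$ and $B$ using the directed-tree result inside the appropriate neighbourhood of each switch vertex. The only cosmetic difference is that instead of your two explicit stages, the paper packages the embedding step as a single \Cref{prop:dir-to-undir} and handles both colours in one dichotomy: either some large subset contains every red directed tree of order $m$ in every medium subset (so the proposition gives a red general $T$), or by \Cref{thm:in-vs-out} every large subset contains every blue directed tree of order $n$, whence the proposition with colours swapped gives a blue general $S$.
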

            
            We will use the next definition and lemma in the proof.
    
            \begin{defn} \label{def:mindeg}
                Let $G$ be an oriented graph and $k$ a positive constant. We call a pair of disjoint subsets $(A,B)\subseteq V(G)^2$ a \emph{$k$-mindegree pair} if every vertex in $A$ has at least $k$ out-neighbours in $B$ and every vertex in $B$ has at least $k$ in-neighbours in $A$.
            \end{defn}
    
            \begin{lem} \label{lem:mindeg}
                Let $0 < \delta < \frac{1}{4}$. In every oriented graph $G$ with at least $\delta |G|^2$ edges, there is a $\frac{\delta}{4} |G|$-mindegree pair.
            \end{lem}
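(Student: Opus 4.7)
The plan is to produce a large directed bipartite structure first and then clean it up by pruning low-degree vertices. To begin, take a uniformly random bipartition $V(G) = A_0 \sqcup B_0$ where each vertex is placed independently into $A_0$ or $B_0$ with probability $1/2$. Any directed edge $uv$ of $G$ lies in $A_0 \times B_0$ precisely when $u \in A_0$ and $v \in B_0$, which happens with probability $1/4$. By linearity of expectation,
\[
\mathbb{E}\bigl[e(A_0,B_0)\bigr] \;\geq\; \tfrac{1}{4} \cdot \delta |G|^2 \;=\; \tfrac{\delta}{4}|G|^2,
\]
so we may fix a particular bipartition $(A_0,B_0)$ achieving at least this many edges from $A_0$ to $B_0$.

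Next, I would run a simple pruning process. Initialise $A := A_0$ and $B := B_0$, and while either some $u \in A$ has fewer than $\tfrac{\delta}{4}|G|$ out-neighbours in $B$, or some $v \in B$ has fewer than $\tfrac{\delta}{4}|G|$ in-neighbours in $A$, remove such a vertex from its respective side. Each removal destroys strictly fewer than $\tfrac{\delta}{4}|G|$ edges going from the current $A$ to the current $B$. Since there are at most $|G|$ removals in total, the process destroys strictly fewer than $\tfrac{\delta}{4}|G|^2$ such edges overall. Because we started with $e(A_0,B_0) \geq \tfrac{\delta}{4}|G|^2$, at least one edge from $A$ to $B$ survives upon termination, forcing both $A$ and $B$ to be non-empty; and by the stopping condition, the resulting pair $(A,B)$ is a $\tfrac{\delta}{4}|G|$-mindegree pair, as required.

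There is no real obstacle here: the only subtle point is making the accounting strict, namely noting that each deletion destroys \emph{strictly fewer} than $\tfrac{\delta}{4}|G|$ edges (since the offending vertex has degree strictly below that threshold), so that the total damage is strictly less than $\tfrac{\delta}{4}|G|^2$ and cannot exhaust the initial stock of edges. This is what rules out the degenerate outcome of the process emptying $A$ or $B$.
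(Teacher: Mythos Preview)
Your proof is correct and is essentially the same as the paper's: both take a random bipartition to secure at least $\tfrac{\delta}{4}|G|^2$ edges from one side to the other, then iteratively delete vertices of low degree and argue by counting that not all edges can be destroyed. The paper phrases the deletion step in terms of the underlying bipartite graph but the accounting is identical to yours.
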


            \begin{proof}
                Let us define a partition $(X,Y)$ of $V(G)$ by putting each vertex independently with probability $1/2$ either in $X$ or in $Y$. Note that the expectation of $e(X,Y)$ is at least $e(G)/4\ge \frac{\delta}{4}|G|^2$. Thus, there exist disjoint sets $X$ and $Y$ with $e(X,Y)\ge \frac{\delta}{4}|G|^2$.

                Now we consider the underlying subgraph of $G$ whose edges are those going from $X$ to $Y$. We remove one by one all vertices with degree less than $\frac{\delta}{4}|G|$ in this underlying graph. Let $A\subseteq X$ and $B\subseteq Y$ be the sets of remaining vertices. Note that both $A$ and $B$ are non-empty, since otherwise all vertices would be removed by this process, each contributing less than $\frac{\delta}{4}|G|$ edges. This would imply $e(X,Y)<\frac{\delta}{4}|G|^2$, a contradiction. Therefore, $(A,B)$ is a $\frac{\delta}{4}|G|$-mindegree pair in $G$.
            \end{proof}

            We shall use Theorem \ref{thm:in-vs-out} in our proof of Theorem \ref{thm:tree-vs-tree}. For this we need a suitable split of $T$.
            
            Let $v$ be the root of $T$. Let $F_1$ be the induced subtree of $T$ containing $v$ and all vertices of $T,$ which can be reached from $v$ by following in-edges (it is possible that $F_1$ contains only $v$). Let $U_2$ be the set of roots of the trees in the forest $T\setminus V(F_1)$. We define $F_2$ to be the forest of induced subtrees of $T$ consisting of the vertices in $U_2$ and all vertices in $T\setminus V(F_1)$ that can be reached from $U_2$ by following out-edges. We continue this procedure and eventually we obtain a split of $T$ into layers of in and out-forests $F_1, \dots, F_\ell,$ such that the forest $F_i$ consists of in-directed trees for odd $i\in[\ell]$ and of out-directed trees for even $i\in [\ell]$. Moreover, all edges in $T$ are either contained in a forest $F_i$ or are between consecutive layers $F_i$ and $F_{i+1}$, and they are directed from $F_i$ to $F_{i+1}$ if $i$ is odd, and are directed from $F_{i+1}$ to $F_i$ if $i$ is even. We call this split the \emph{in-out split} of $T$.
            
            \begin{figure}[ht]
                \caption{The in-out split of a tree.}
                \includegraphics{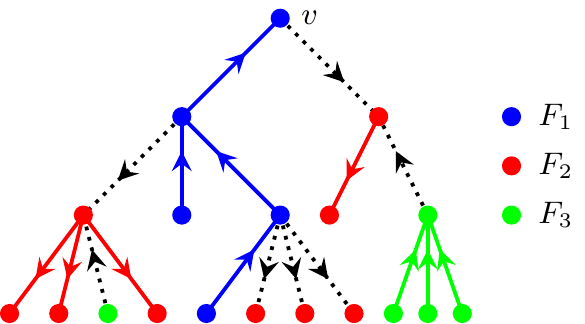}
                \label{fig:out-in-split}
            \end{figure}

            \begin{proof}[ of Theorem \ref{thm:tree-vs-tree}]
                We first prove the theorem under the additional assumption that $S$ is directed, using Theorem \ref{thm:in-vs-out}, and then we use this to prove the theorem in full generality. In order to avoid repeating the arguments, we use the Proposition \ref{prop:dir-to-undir} below.
                
                Let $\delta = \frac{\eps^2}{32}$. Let $c_1$ be the constant from Theorem \ref{thm:in-vs-out} with parameters $\eps$ and $2 \sigma$. Without loss of generality $c_1 \ge 8/\delta$, and let $c = c_1^3$.
                Let $N,n,m$ be fixed (such that the inequalities in the statement of the theorem hold), and let $G$ be a $2$-coloured tournament on $N$ vertices. 
                
                \begin{prop} \label{prop:dir-to-undir}
                    Let $N/c_1 \le M \le N$. Let $U \subseteq V(G)$ be a set of size $M$, and suppose that every subset of $U$ of size at least $M/c_1$ contains a red copy of every \emph{directed} tree of order at most $m$. Then $U$ contains a blue copy of every tree (not necessarily directed) of order $n$, or a red copy of every tree of order $m$. 
                    
                    The same holds with the roles of red and blue, and the roles of $n$ and $m$, swapped.
                \end{prop}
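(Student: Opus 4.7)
The plan is to show: for any tree $S$ of order $n$ and any tree $T$ of order $m$, if $U$ has no blue copy of $S$ then $U$ contains a red copy of $T$. This per-pair statement implies the (formally stronger) disjunctive conclusion in the statement, and the second assertion of the proposition follows by swapping the colors throughout.

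First, if the red subgraph of $G[U]$ has at most $(\eps^2/32)M^2$ edges, then Lemma~\ref{lem:tree-by-number-of-edges} applied with the colors swapped inside $U$ (valid since $(\eps/4)M \ge \max\{\sigma\log N,\,n\}$ for suitably large $c$, using $M \ge N/c_1$ and $n \le N/c$) produces a blue copy of every tree of order at most $(\eps/4)M$, in particular a blue $S$, contradicting our assumption. So $G[U]$ has more than $(\eps^2/32)M^2$ red edges, and Lemma~\ref{lem:mindeg} applied to its red subgraph with $\delta = \eps^2/32$ yields disjoint sets $A, B \subseteq U$ such that every vertex of $A$ has at least $\delta M/4$ red out-neighbors in $B$, and every vertex of $B$ has at least $\delta M/4$ red in-neighbors in $A$. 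Since $c_1 \ge 4/\delta$, both $|A|$ and $|B|$ are at least $M/c_1$, and they stay above $M/c_1$ even after removing the at most $m$ vertices that will be used during the embedding, provided $c$ is taken large enough that $\delta M/4 - m \ge M/c_1$.

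To construct the red $T$, take the in-out split $F_1, F_2, \ldots, F_\ell$ of $T$: each $F_i$ is a forest of in-directed trees when $i$ is odd and of out-directed trees when $i$ is even, and edges between consecutive layers are directed $F_i \to F_{i+1}$ when $i$ is odd and $F_{i+1} \to F_i$ when $i$ is even. I embed the layers sequentially, maintaining the invariant that the image of $F_i$ lies entirely in $A$ when $i$ is odd and entirely in $B$ when $i$ is even. For the base case $F_1$ is a single in-directed tree of order at most $m$, so the hypothesis applied to $A$ yields a red copy inside $A$. For $i \ge 2$ I process the trees $T_j$ of $F_i$ one by one: the root $r_j$ of $T_j$ is joined to a parent $p_j \in F_{i-1}$ by a single edge whose direction is fixed by the parity of $i$, and the image $p_j'$ already lies in the opposite part. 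A parity check (even $i$: edge $p_j \to r_j$, $p_j' \in A$, so red out-neighbors of $p_j'$ lie in $B$; odd $i \ge 3$: edge $r_j \to p_j$, $p_j' \in B$, so red in-neighbors of $p_j'$ lie in $A$) shows that the mindegree property gives a candidate set inside the correct target part of size at least $\delta M/4$; after removing previously used vertices it still has size at least $M/c_1$, so the hypothesis produces a red copy of the directed tree $T_j$ inside it, with the image of $r_j$ being a red neighbor of $p_j'$ of the required orientation. Iterating through all $F_i$ yields a red copy of $T$ in $U$.

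The main point to be careful about is the parity bookkeeping in the last step: one must check that placing odd-indexed forests in $A$ and even-indexed ones in $B$ aligns, for each layer transition, the in-out-split edge direction with the direction of red edges available from the mindegree pair $(A,B)$, so the hypothesis can be invoked in a neighborhood inside the correct part. Everything else is absolute-constant choice, which matches the conventions set up in the proof of Theorem~\ref{thm:tree-vs-tree} (in particular $c_1 \ge 8/\delta$ and $c = c_1^3$).
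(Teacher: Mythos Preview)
Your proof is correct and follows essentially the same route as the paper: use Lemma~\ref{lem:tree-by-number-of-edges} to reduce to the case of many red edges, apply Lemma~\ref{lem:mindeg} to obtain a red $(\delta/4)M$-mindegree pair $(A,B)$, and then embed the in-out split of $T$ layer by layer, placing odd layers in $A$ and even layers in $B$, invoking the hypothesis on the (still large) red neighbourhood of the parent at each step. Your parity bookkeeping is slightly more explicit than the paper's (which only writes out the even-$i$ case and says the odd case is similar), but the argument is the same.
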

                
                \begin{proof}
                    As we have done already several times, by Lemma \ref{lem:tree-by-number-of-edges} we can assume that $U$ spans at least $\delta M^2$ red edges, since otherwise $U$ contains a blue copy of every tree of order $(\eps/4)M \ge n$. Then by Lemma \ref{lem:mindeg} there exist disjoint sets $A, B \subseteq U$ such that $(A,B)$ is a $(\delta / 4)M$-mindegree pair in the red subgraph of $G$. Let $T$ be a tree on $m$ vertices, consider its in-out split $F_1,\dots,F_\ell$, and denote by $V_1,\dots, V_\ell$ the corresponding partition of vertices of the tree $T$; recall that $F_1$ is an in-directed subtree of $T$. We will embed every in-directed tree of the in-out split inside $A$ and every out-directed tree in $B$.
    
                    \begin{claim} \label{claim:embedding}
                        For every $i\in[\ell]$ there is a red copy of $T[V_1\cup \dots \cup V_i]$ such that $V_i$ is embedded in $A$ if $i$ is odd and in $B$ if $i$ is even.
                    \end{claim}
                    \begin{proof}
                        We prove this by induction. For the basis, note that $F_1$ is a single in-directed tree and $|A| \ge (\delta / 4) M \ge M / c_1$, thus by assumption there is a red copy of $F_1$ inside $A$.
            
                        Now let us assume that the claim holds for $1 \le i-1 < \ell$. For convenience we assume that $i$ is even; the case where $i$ is odd follows similarly. So, we have found a red copy of $T[V_1\cup \dots \cup V_{i-1}]$ such that $V_{i-1}$ is embedded in $A$. Now we need to show how to embed the trees of the forest $F_i$. Let $T'$ be one of the trees in the forest $F_i$ and $v \in A$ be the vertex corresponding to the parent of the roof of $T'$ in $T[V_1\cup \dots \cup V_{i-1}]$. Since $(A,B)$ is a $(\delta / 4) M$-mindegree pair, $v$ has at least $(\delta / 4) M$ red out-neigbours in $B$. So far we embedded at most $n$ vertices of the tree $T$, so the number of available vertices in the neighbourhood is at least $(\delta / 4) M-n \ge (\delta / 4) M - M/c_1 \ge M/c_1$. Therefore, by assumption, there is a red copy of $T'$ in $B$ rooted at some vertex $w$, such that edge $vw$ is red.
                        
                        This way we can embed all the trees in $F_i$ and extend the red copy of $T[V_1\cup \dots \cup V_{i-1}]$ to a red copy of $T[V_1\cup \dots \cup V_{i}]$ satifying the conditions of the claim. 
                    \end{proof}
        
                    By Claim \ref{claim:embedding} with $i=\ell$, $U$ contains a red $T$. As $T$ was an arbitrary tree on $m$ vertices, the proof is complete. An analogous argument can be used to prove the statement of the proposition with the roles of red and blue, and of $m$ and $n$, swapped.
                \end{proof}
                We now show how to complete the proof of Theorem \ref{thm:tree-vs-tree} using Proposition \ref{prop:dir-to-undir}. Suppose that there exists a subset $U \subseteq V(G)$ of size at least $N/c_1$, whose subsets of size at least $|U|/c_1$ all contain a red copy of every directed tree of order $m$. Then, by Proposition \ref{prop:dir-to-undir}, $U$ contains a red copy of every tree of order $m$ or a blue copy of every tree of order $n$, and we are done. Thus we may assume that every subset $U \subseteq V(G)$ has a subset $W_U$ of size at least $|U|/c_1$ such that $W_U$ does not contain a red $T_U$, for some directed tree $T_U$ of order $m$. But then, by Theorem \ref{thm:in-vs-out}, every such $W_U$ contains a blue copy of every directed tree on $n$ vertices (using the definition of $c_1$, and the inequalities $n,m \le N/c = N/c_1^3 \le |W_U|/c_1$ and $nm \le N^2/c^2 \log N \le |W_U|^2/c_1^2 \log |W_U|$). In particular, every set $U \subseteq V (G)$ of size at least $N/c_1$ contains a blue copy of every directed tree on $n$ vertices. By Proposition \ref{prop:dir-to-undir} again (with the roles of red and blue and $n$ and $m$ swapped), either $G$ contains a red copy of every tree on $m$ vertices, or a blue copy of every tree on $n$ vertices, as required.
            \end{proof}
    
    \section{Concluding remarks and open problems}
    
        In this paper we have proved that, \whp, in every $2$-edge-colouring of a random tournament on $Cn\sqrt{\log n}$ vertices there exists a monochromatic copy of any tree of order $n$.
    
        Buci\'c, Letzter and Sudakov \cite{complete-directed-ramsey} proved tight results for both oriented and directed Ramsey numbers of trees for the case of more than two colours as well. It seems that the methods used in their proofs do not extend directly to the random tournament setting, so it could be very interesting to extend our result to $k$-colours. In the case of paths they showed in \cite{path-vs-path} that, \whp, in any $k$-edge colouring of a random tournament on $\Omega(n^{k-1} \sqrt{\log n})$ vertices, there is a monochromatic path of length $n$. Moreover, an example by Ben-Eliezer, Krivelevich and Sudakov \cite{ben2012size} shows that there is a $k$-edge colouring of any tournament on $cn^{k-1}(\log n) ^{1/k}$ vertices with no monochromatic paths of length $n$, for some constant $c > 0$. We believe the upper bound should be tight, for random tournaments, but the $k$-colour case is still open, even for directed paths.
    
        Burr and Erd\H{o}s \cite{burr73} initiated the study of Ramsey numbers of bounded degree graphs in $1975$. They conjectured that the Ramsey number of bounded degree graphs is linear in their size. This was subsequently proved by Chv\'atal, R\"odl, Szemer\'edi and Trotter \cite{chvatal83}. The dependence of the constant factor on the maximum degree in this bound was later improved, first by Eaton \cite{eaton98}, then by Graham, R\"odl and Ruci\'nski \cite{graham00} and the currently best bound is due to Conlon, Fox and Sudakov \cite{conlon12}. Buci\'c, Letzter and Sudakov \cite{complete-directed-ramsey} pose an interesting analogous problem in the oriented and directed Ramsey settings. They ask if for every $d$ there is a constant $c = c(d)$ such that any tournament on $cn$ vertices contains any acyclic graph on at most $n$ vertices with maximum degree at most $d$. This can be thought of as the one colour version of the more general question of determining the $r$-colour oriented Ramsey number of bounded degree acyclic graphs. A similar question arises naturally in the random setting. Here the one colour version is a simple consequence of the directed version of the Regularity Lemma of Szemer\'edi \cite{sze1978-regularity} due to Alon and Shappira \cite{directed-regularity}. However, the question of the two colours is open and interesting and it seems likely that a result in any setting could also help with the other settings.
    
        Theorem \ref{thm:main} is tight up to a constant factor, as long as the only information we are given on the tree is its order. However, it is not tight for every tree of order $n$. For example, if the tree in question $T$ is a star of order $n$, then it is not hard to see that the random tournament $G$ is only required to have order $\Omega(n)$ in order to satisfy $G \to T$, as opposed to a bound of $\Omega(n \sqrt{\log n})$ which is needed for a directed path on $n$ vertices, or for trees which contain directed subpaths of order $\Omega(n)$. With this in mind, it is natural to ask if the tight bound for a tree $T$ depends only on the order of the tree and the length of its longest directed subpath, denoted by $\ell(T)$. More precisely, Buci\'c, Letzter and Sudakov \cite{complete-directed-ramsey} ask if the directed Ramsey number of a tree is $O(|T| \cdot \ell(T))$; if this holds, it can readily be seen to be tight. They prove that this holds for oriented paths. It would also be interesting to tackle this question in the random tournaments setting. 

\subsection*{Acknowledgements}
        We would like to thank the anonymous referee for helpful comments.
    
\providecommand{\bysame}{\leavevmode\hbox to3em{\hrulefill}\thinspace}
\providecommand{\MR}{\relax\ifhmode\unskip\space\fi MR }
\providecommand{\MRhref}[2]{%
  \href{http://www.ams.org/mathscinet-getitem?mr=#1}{#2}
}
\providecommand{\href}[2]{#2}

\end{document}